\newcommand{\rmnum}[1]{\romannumeral #1}
\newcommand{\Rmnum}[1]{\expandafter\@slowromancap\romannumeral #1@}
\begin{document}
\def\@captype{figure}
\newtheorem{thm}{\emph{Theorem}}[section]
\newtheorem{defn}[thm]{\emph{Definition}}
\newtheorem{prop}[thm]{\emph{Proposition}}
\newtheorem{rem}[thm]{\emph{Remark}}
\newtheorem{exa}[thm]{\emph{Example}}
\newtheorem{lem}[thm]{\emph{Lemma}}
\newtheorem{cor}[thm]{\emph{Corollary}}
\newtheorem*{ass}{\emph{Assumption}}

\begin{frontmatter}



\title{Conditional Tail-Related Risk Estimation Using Composite Asymmetric Least Squares and Empirical Likelihood}
\author[label2]{Sheng Wu}\ead{11535034@zju.edu.cn}
\author[label2]{Yi Zhang}
\address[label2]{School of Mathematical Sciences, Zhejiang University}
\author[label3]{Jun Zhao}
\address[label3]{Zhejiang University City College}
\author[label4]{Liming Shen}
\address[label4]{Asset Management Department, Bank of Hangzhou}

\begin{abstract}
In this article, by using composite asymmetric least squares (CALS) and empirical likelihood, we propose a two-step procedure to estimate the conditional value at risk (VaR) and conditional expected shortfall (ES) for the GARCH series. First, we perform asymmetric least square regressions at several significance levels to model the volatility structure and separate it from the innovation process in the GARCH model. Note that expectile can serve as a bond to make up the gap from VaR estimation to ES estimation because there exists a bijective mapping from expectiles to specific quantile, and ES can be induced by expectile through a simple formula. Then, we introduce the empirical likelihood method to determine the relation above; this method is data-driven and distribution-free. Theoretical studies guarantee the asymptotic properties, such as consistency and the asymptotic normal distribution of the estimator obtained by our proposed method. A Monte Carlo experiment and an empirical application are conducted to evaluate the performance of the proposed method. The results indicate that our proposed estimation method is competitive with some alternative existing tail-related risk estimation methods.
\end{abstract}

\begin{keyword}
Tail-related risks \sep GARCH model \sep Composite asymmetric least squares \sep Empirical likelihood
\end{keyword}
\end{frontmatter}

\newpage

\section{Introduction}
The accurate assessment of the exposure to market risk lies at the core of risk control and portfolio management. Value at risk (VaR), which was first introduced in 1990s as a risk measure, has witnessed a great development and wide applications in finance-related fields \citep{Jorion2000Value} due to its conceptual simplicity and practical convenience. From the perspective of statistics, VaR actually amounts to the quantile of a random loss variable and measures the maximum potential loss at a given specific confidence level. However, although VaR is employed as the standard risk measure by Basel \textrm{II}, it is criticised because of its lack of subadditivity, especially in portfolio management, because it is generally accepted that the aggregate risk  on a portfolio should not be greater than the sum of the risks of its constituents, but VaR does not reflect this feature. Furthermore, \cite{Lucas1998Extreme} noted that VaR ignores the extreme loss beyond itself, which
may cause some uncontrollable and hazardous loss.

These shortcomings of VaR motivated the development of another risk measure, the expected shortfall (ES), which was introduced by \cite{artzner_coherent_1999}. The ES risk measure is defined as the conditional expectation
of the loss exceeding or equal to VaR at a given confidence level. ES has been studied in detail, and it has been shown that ES
possesses good properties, such as monotonicity, sub-additivity, homogeneity, and translational invariance. In other words, ES enjoys coherence; see \cite{Pflug2000Some, acerbi_expected_2001, acerbi_coherence_2002}. This distinguished property has caused ES to be increasingly widely used in finance-related fields, such as portfolio management, risk control and prediction.

Considering the respective merits of VaR and ES, such as VaR's conceptual simplicity and ES's coherence, ES and VaR have recently been employed simultaneously  to obtain a deeper and more accurate understanding of risk management, especially in the analysis of financial time series data. Estimating (or forecasting) the conditional VaR and ES of time series is a great challenge and has attracted heated discussion for a long time; see \cite{mcneil_estimation_2000, engle_caviar_2004, cai_nonparametric_2009, taylor_estimating_2008, xiao_conditional_2009, kuan_assessing_2009}, and so on.

\cite{engle_caviar_2004} provided a review of the VaR literature and divided the corresponding estimation or prediction methods
into three different categories: parametric, semiparametric, and nonparametric. The detailed summary in \cite{engle_caviar_2004} provides guidance about the general research framework of VaR and ES since ES estimation follows  similar patterns. Generally speaking, parametric methods need a specific parameterized distribution assumption regarding financial prices. One of the most commonly used parametric methods for time series data is the volatility-based method, in which VaR is estimated based on a conditional volatility forecast with a distribution assumption for the shape of residuals. GARCH models \citep{Bollerslev1986Generalized} are the most widely used models for forecasting volatility \citep{Granger2003Forecasting}, and there are different choices for the residual distribution, such as the normal distribution \citep{Bollerslev1986Generalized}, Student or skewed Student distributions \citep{zhu_modeling_2011}, generalized Pareto distribution \citep{fotios_c._harmantzis_empirical_2006}, Johnson family \citep{simonato_performance_2011}, and mixture distribution \citep{broda_expected_2011}. This type of
approach to VaR estimation has  an appealing advantage in that it provides the structure of the data generation process, so it is very convenient and has comparable accuracy for forecasting or predicting the future VaR. However, this approach focuses on estimating VaR, and it is not clear how to obtain the corresponding ES estimates because in some situations, such as portfolio management, VaR is insufficient for describing the total risk. Non-parametric methods are another choice for VaR and ES estimation. \cite{cai_regression_2002} first applied a kernel-based method to estimate VaR. On the basis of this work, many other nonparametric methods for VaR and ES estimation have been developed, such as \cite{scaillet_nonparametric_2004,chen_nonparametric_2008,cai_nonparametric_2008,cai_nonparametric_2009}.

Compared with parametric methods, kernel-based non-parametric methods do not require specification of the distribution and are thus more flexible. However, it is well known that kernel methods may lose efficiency and be impractical for financial problems because they require larger data sets to obtain a comparable estimation accuracy \citep{Fan2006Nonlinear}. Semiparametric methods are a good alternative to balance the tradeoff of estimation efficiency and distribution-free demand; see \cite{hang_chan_interval_2007,linton_estimation_2013,wang_conditional_2016}. One of the semiparametric approaches for VaR and ES estimation is based on extreme value  theory (EVT). For example, for the GARCH model, \cite{mcneil_estimation_2000} proved that the distribution of the residuals standardized by GARCH
conditional volatility estimates beyond some threshold can be approximated by some extreme value distribution and
proposed the peaks over threshold EVT method to obtain the corresponding VaR and ES estimations.

The semiparametric autoregressive model is another appealing approach to VaR estimation. \cite{engle_caviar_2004} proposed the conditional autoregressive value at risk (CAViaR) model for time series and adopted quantile regression for coefficient estimation. The CAViaR model deals directly with the quantile process instead of the whole distribution of financial returns. Consequently, it does not require a specification of the distribution of financial returns, i.e., it does not rely on distributional assumptions, which is a quite appealing advantage in practice. However, the CAViaR model may cause inconvenience when estimating characteristic features other than quantile, such as the volatility of financial data. Considering this demand, \cite{xiao_conditional_2009} applied quantile regression to the widely used financial data generation process - the GARCH model for conditional VaR estimation. However, as \cite{taylor_estimating_2008} noted, it is still unclear how to estimate the corresponding ES from the VaR estimate in the CAViaR or GARCH models. The gap between the VaR and ES estimations is made up for by expectile. \cite{aigner_estimation_1976} and \cite{newey_asymmetric_1987} adopted the `asymmetric' concept from quantile regression in a smooth manner and proposed asymmetric least squares estimation, from which expectile originates. \cite{Efron_1991} showed that there exists a bijective mapping from expectile to quantile; i.e., for each $\alpha$-quantile of some random variable, there exists a unique corresponding $\tau$-expectile equivalent to the $\alpha$-quantile. This wonderful property  makes expectile serve as a bond between VaR and ES. The pioneering framework of ES estimation from VaR using expectile was proposed by \cite{taylor_estimating_2008}. Applying the key idea to treat the quantile structure in the CAViaR model to expectile, Taylor introduced the conditional autoregressive expectile (CARE) model to estimate VaR and ES simultaneously for time series. Since then, VaR and ES estimations using expectile have seen wide discussion and development. \cite{kuan_assessing_2009} modified the CARE model and studied the asymptotic property of this method. \cite{xie_varying-coefficient_2014} generalised the CARE model to situations with time-varying coefficients. \cite{kim_nonlinear_2016} recently extended this idea to the nonlinear case.

In the use of  CARE-type models, the relationship between VaR and ES is built up by the bijective mapping between $\tau$-expectile and $\alpha$-quantile, so a fundamental problem of great concern is how to determinate the corresponding bijective mapping $\tau$ for a fixed $\alpha$. For this problem, \cite{taylor_estimating_2008} first calculated the $\tau$-expectile and $\alpha$-quantile for a sequence of  $\tau$ and $\alpha$ using historical data and determined the corresponding mapping $\tau$ for a fixed $\alpha$ via grid-search. This approach is straightforward,  but it can be shown that it may lose estimation accuracy when estimating the conditional tail-related risk when the historical financial data are insufficient (see our simulation results in Subsection 2.1). \cite{kim_nonlinear_2016} obtained this $\tau$ value, provided that the innovation process followed the normal distribution. These methods for determining the bijective mapping are either demanding for large data sets or rely on a pre-specified distribution. Another notable issue is indicated in the empirical study of \cite{kuan_assessing_2009}, which reminds us that this $\tau$ value may be time-varying. In this article, we propose using the data-driven and distribution-free empirical likelihood method introduced by \cite{owen_empirical_1990} and \cite{qin_empirical_1994} to determine the mapping  expectile level $\tau$ for a fixed $\alpha$ for the innovation term. Before we start, a pre-processing operation must be performed to separate the innovation process from the volatility part. For this purpose, we adopt the idea used by \cite{xiao_conditional_2009} and propose using the composite asymmetric least regression.

To summarise, in this article, we estimate the volatility and conditional tail-related risks of the financial return series under the GARCH framework. Under the GARCH framework, the induced dynamic autoregressive structure of conditional tail-related risks stems from the dynamic structure of volatility, which is different from the directly portrayed autoregressive risk structure, such as CAViaR in \cite{engle_caviar_2004} or CARE in \cite{taylor_estimating_2008, kuan_assessing_2009}. We assume that the innovation process is an i.i.d. random sequence, which is commonly used in the GARCH model; see \cite{berkes_garch_2003, hall_inference_2003}. This assumption does not involve any distribution assumption but indeed helps gain a complete picture of the return series. To better capture the dynamic structure of volatility, we adopt the idea of \cite{xiao_conditional_2009} and propose CALS. Compared to the method in \cite{xiao_conditional_2009}, the main improvement of our proposed method is that we divide all autoregressive parameters into two parts: parameters from the dynamic structure of volatility and parameters from the conditional distribution of the innovation term. With such a representation, we can have a more intuitive understanding of the dynamic structure of expectile, which is the fundamental concern of our methods. This division of the coefficients can help us obtain the volatility structure from CALS directly without complex matrix decomposition computations. Once the volatility part is modelled, we can separate the innovation process from the return series. Then, we can determine the corresponding bijective mapping $\tau$ for a fixed $\alpha$ such that the $\tau$-expectile equals the $\alpha$-quantile via the empirical likelihood method. Combining CALS and empirical likelihood, the conditional VaR and ES in a GARCH framework can be estimated.

The article is organised as follows. Section 2 reviews three tail-related risks and their relation, which lead to some potential issue in estimating conditional tail-related risk. Based on these issue, we state the motivations and contributions of the proposal. In section 3, we introduce our method to estimate the conditional VaR and ES by combining CALS and empirical likelihood. The asymptotic properties of the method are presented in section 4. Simulation results and an empirical application of the method are given in section 5 and section 6, respectively. Finally, section 7 concludes the paper and presents further discussion. The proofs of some theorems in section 4 are provided in the appendix.

\section{Review of related risks, potential issues and motivation}
In this section, we first review the definition and some properties of three tail-related risks (VaR, ES and expectile), from which the estimating equations in empirical likelihood are derived. Then, we analyse the dynamic structure of tail-related risks for GARCH-type return series and state reasons why we capture the dynamic structure using CALS. Finally, we indicate some fundamental issues in the overall estimating procedure and highlight the motivation and contribution of our proposed method.

\subsection{Review of three tail-related risks}
VaR and ES are two widespread risk measures in the field of risk management. Since we are dedicated to return series, here, we consider the downside risk, similarly used in \cite{acerbi_expected_2001, engle_caviar_2004, taylor_estimating_2008}. Hence, the VaR of a random variable $X$ with significance level $\alpha$ is defined as
\begin{eqnarray}
Q_{\alpha}(X)\triangleq\inf\{x|F_X(x)\geq\alpha\},
\end{eqnarray}
where $F_X$ is the cumulative distribution function of $X$. If $F_X$ is continuous, the corresponding ES is defined as
\begin{eqnarray}
ES_{\alpha}(X)=\mathbf{E}[X|X<Q_{\alpha}(X)]=\frac{1}{\alpha}\mathbf{E}[X\cdot I(X<Q_{\alpha}(X))].
\end{eqnarray}

Both risk measures have their own merits and defects (see the Introduction section). In addition, to evaluate and backtest the risk measures, elicitability is another considerable property. Briefly, the elicitability of a risk measure determines whether we can find a scoring function from which we can obtain the optimal forecast of the measure \citep{ziegel_coherence_2013}. The non-elicitability of ES brings many problems in estimating and backtesting since the corresponding M-estimation or test statistic is hard to construct directly \citep{acerbi2014back,fissler_expected_2015,fissler_higher_2016}.

\cite{kuan_assessing_2009} noted that expectile is another measure for assessing the tail-related risk; see also \cite{bellini_risk_2017}. Expectile shares both elicitability and coherency. Similar to quantile from asymmetric absolute loss, the expectile with significance level $\tau$ of a random variable $X$ originates from the asymmetric squares loss\citep{newey_asymmetric_1987}
\begin{eqnarray}
\mu_\tau(X)=\arg\underset{\mu}\min \mathbf{E}[\rho_{\tau}(X-\mu)],
\end{eqnarray}
where the asymmetric squares loss $\rho_{\tau}(\cdot)$ is defined as $\rho_{\tau}(r)=|\tau-I(r<0)|r^2$. Without loss of generality, suppose that $\mathbf{E}(X)=0$; by a straightforward calculation, we have
\begin{eqnarray}\label{expectiletoessim}
\mathbf{E}(X|X<\mu_\tau(X))=\left(1+\frac{\tau}{(1-2\tau)F_X(\mu_\tau(X))}\right)\mu_\tau(X).
\end{eqnarray}
Eq.(\ref{expectiletoessim}) indicates the specific relation between expectile and ES. \cite{jones_expectiles_1994} proposed another proposition of expectile, showing that there exists a unique increasing bijective function $h:(0,1)\rightarrow(0,1)$ such that $Q_{\alpha}(X)=\mu_\tau(X)$, when $\tau=h_X(\alpha)$, where $h_X$ is defined as
\begin{eqnarray}\label{htheta}
h_X(\alpha)=\frac{-\alpha Q_{\alpha}(X)+G_X(Q_{\alpha}(X))}{2G_X(Q_{\alpha}(X))+(1-2\alpha)Q_{\alpha}(X)},
\end{eqnarray}
with $G_X(q)=\int_{-\infty}^{q}tdF_X(t)$ the partial moment function of $X$. Eq.(\ref{htheta}) builds the close relation between expectile and quantile.

Based on Eq.(\ref{expectiletoessim}) and Eq.(\ref{htheta}), we can address three tail-related risk simultaneously if we know the mapping $\tau=h_X(\alpha)$. Hence, finding the mapping from $\alpha$ to $\tau$ is an important issue. \cite{taylor_estimating_2008} proposed a grid-search method to determine the corresponding $\tau$ for fixed $\alpha$. This approach may lose estimation accuracy when the historical financial data are insufficient (see the boxplot in Figure 1).

To improve the estimation accuracy, we adopt the empirical likelihood method, which is data-driven and distribution-free, to estimate $\tau$. The empirical likelihood method is an effective and flexible nonparametric method of statistical inference \citep{owen2001empirical}. Maximum empirical likelihood method based on estimating equations is one such method, which is typically used for point estimation \citep{qin_empirical_1994}. The properties of expectile described above opportunely provide us with the following two estimating equations:
\begin{eqnarray}\label{theoel}
\left\{
\begin{aligned}
 &\mathbf{E}[(X-\mu_{\tau}(X)I(X<\mu_{\tau}(X))+\frac{\tau}{1-2\tau}X]-\frac{\tau}{1-2\tau}\mu_{\tau}(X)=0,\\
 &\mathbf{E}[I(X<\mu_{\tau}(X))]-\alpha=0.\\
\end{aligned}
\right.
\end{eqnarray}
from which we construct the maximum empirical likelihood method to estimate $\tau$.

Here, a simple simulation in the i.i.d case is performed to evaluate the performance of the empirical likelihood method. The data are generated from standard normal distribution and Student's distribution with 5 degrees of freedom, and the sample size is set to 1000, 300 and 100. \cite{taylor_estimating_2008}'s grid-search method and the empirical likelihood method are used to estimate the $\tau$ value corresponding to $\alpha=0.05$. We repeat each data generation and estimation procedure 100 times.

The squared error is used as the evaluation criterion to compare the estimation accuracy of the grid-search method and the empirical likelihood method. Figure 1 shows the boxplot of the squared errors of the two methods in different cases. The simulation result convinces us that the empirical likelihood method is a proper choice to solve the $\tau$ selection problem. In addition to the competitive estimation accuracy, the estimation procedure of the empirical likelihood method is more computationally convenient than the grid-search method is.

\begin{figure}[H]
\captionsetup{font={small}}
\centering
\subfigure{
\includegraphics[width=0.3\textwidth]{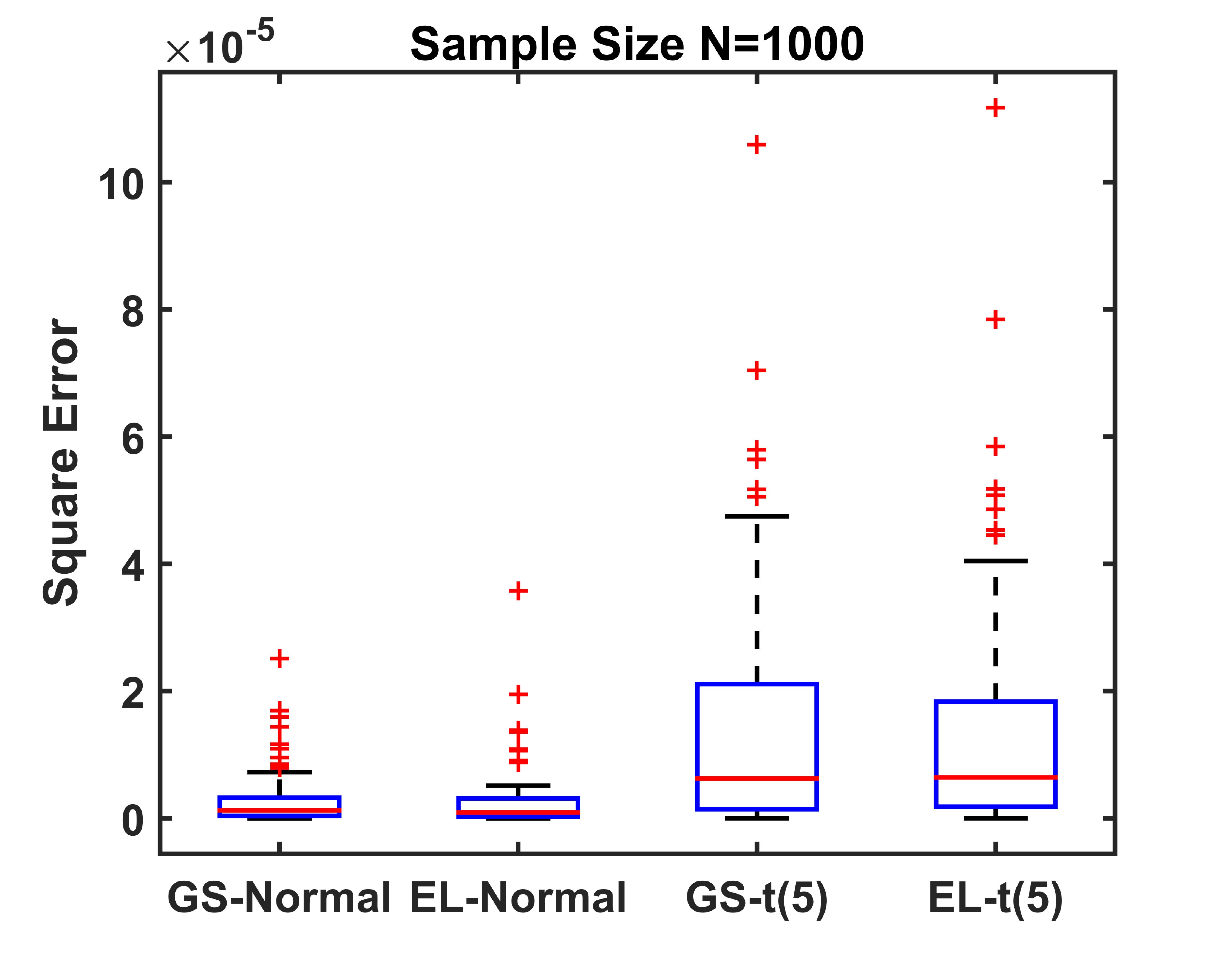}
}
\subfigure{
\includegraphics[width=0.3\textwidth]{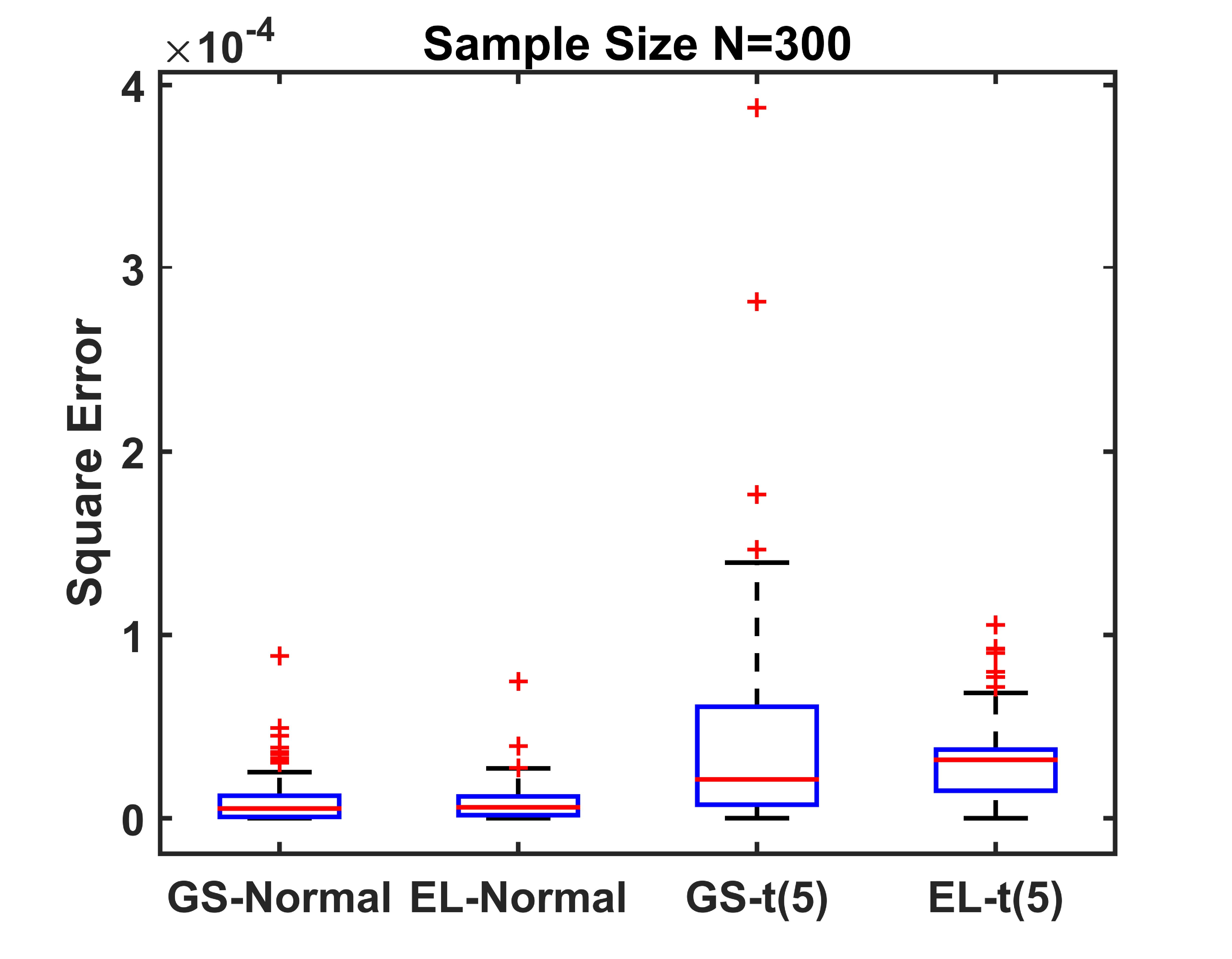}
}
\subfigure{
\includegraphics[width=0.3\textwidth]{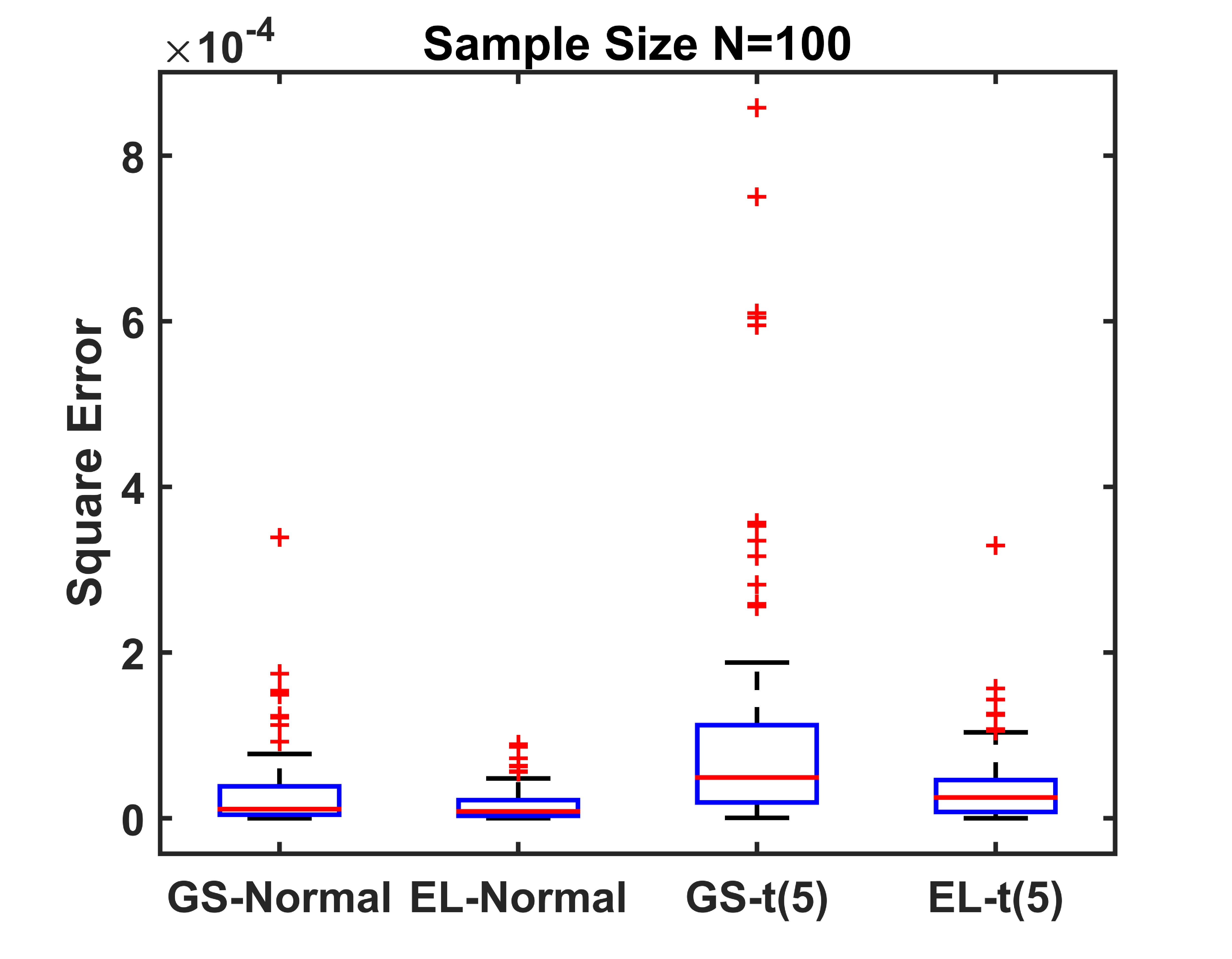}
}
\caption{Boxplot of the squared error of $\tau$ estimation using the grid-search and the empirical likelihood methods}
\end{figure}

\subsection{Conditional tail-related risk under the GARCH framework}

In this section, we focus on how the dynamic structure of conditional tail-related risks stems from the dynamic structure of volatility under the GARCH framework. This analysis inspires us to propose the CALS method, which is an improvement with respect to the composite quantile method \citep{xiao_conditional_2009} for estimating volatility.

Consider a GARCH-type return series $\{Y_t\}_{t\geq1}$ given by
\begin{eqnarray}
Y_t=\sigma_t \cdot\varepsilon_t,
\end{eqnarray}
with natural filtration $\{\mathcal{F}_t\}_{t\geq1}$. The regular assumptions for the GARCH model \citep{berkes_garch_2003, hall_inference_2003} includes the following: $\{\varepsilon_t\}_{t\geq1}$ is an innovation series (i.e. independent
identically distributed) from marginal distribution $F_\varepsilon$ with zero mean and unit variance; $\varepsilon_t$ is independent to $\mathcal{F}_{t-1}$; and the volatility term $\sigma_t$ is measurable with respect to $\mathcal{F}_{t-1}$.

For quantile level $\alpha$ and expectile index $\tau$, denote $Q_\alpha(\varepsilon)$ and $ES_\alpha(\varepsilon)$ as the $\alpha$-th quantile and ES of the marginal distribution $F_\varepsilon$, and let $\mu_\tau(\varepsilon)$ be its $\tau$-th expectile. The conditional VaR, conditional ES and conditional expectile of $\{Y_t\}_{t\geq1}$ are represented as
\begin{eqnarray}
Q_\alpha(Y_{t}|\mathcal{F}_{t-1})=\sigma_{t}\cdot Q_\alpha(\varepsilon),
\end{eqnarray}
\begin{eqnarray}
ES_\alpha(Y_{t}|\mathcal{F}_{t-1})=\sigma_{t}\cdot ES_\alpha(\varepsilon),
\end{eqnarray}
\begin{eqnarray}
\mu_\tau(Y_{t}|\mathcal{F}_{t-1})=\sigma_{t}\cdot \mu_\tau(\varepsilon).
\end{eqnarray}

Following the linear GARCH model in \cite{xiao_conditional_2009}, the volatility $\sigma_t$ has the following dynamic structure:
\begin{eqnarray}\label{yt}
Y_t=\sigma_t \cdot\varepsilon_t,
\end{eqnarray}
\begin{eqnarray}\label{garch}
\sigma_t=\beta_{0}+\sum_{i=1}^{p}\beta_{i}\sigma_{t-i}+\sum_{j=1}^{q}\gamma_{j}|Y_{t-j}|,
\end{eqnarray}
where $\beta_0$ and $\gamma_1,\ldots,\gamma_q$ are positive. Hence, we obtain the autoregressive specification of conditional VaR, conditional ES and conditional expectile as follows:
\begin{eqnarray}\label{varmodel}
Q_\alpha(Y_t|\mathcal{F}_{t-1})=\beta_{0}^{\ast}+\sum_{i=1}^{p}\beta_{i}^{\ast}Q_\alpha(Y_{t-i}|\mathcal{F}_{t-i-1})+\sum_{j=1}^{q}\gamma_{j}^{\ast}|Y_{t-j}|,
\end{eqnarray}
where $\beta_{0}^{\ast}=\beta_{0}Q_\alpha(\varepsilon)$, $\beta_{i}^{\ast}=\beta_{i},i=1,\ldots,p$, $\gamma_{j}^{\ast}=\gamma_{j}Q_\alpha(\varepsilon),j=1,\ldots,q$;
\begin{eqnarray}\label{ESmodel}
ES_{\alpha}(Y_t|\mathcal{F}_{t-1})=\beta_{0}^{\ast\ast}+\sum_{i=1}^{p}\beta_{i}^{\ast\ast}ES_{\alpha}(Y_{t-i}|\mathcal{F}_{t-i-1})+\sum_{j=1}^{q}\gamma_{j}^{\ast\ast}|Y_{t-j}|,
\end{eqnarray}
where $\beta_{0}^{\ast\ast}=\beta_{0}ES_\alpha(\varepsilon)$, $\beta_{i}^{\ast\ast}=\beta_{i},i=1,\ldots,p$, $\gamma_{j}^{\ast\ast}=\gamma_{j}ES_\alpha(\varepsilon),j=1,\ldots,q$;
\begin{eqnarray}\label{caremodel}
\mu_\tau(Y_t|\mathcal{F}_{t-1})=\beta_{0}^{\ast\ast\ast}+\sum_{i=1}^{p}\beta_{i}^{\ast\ast\ast}\mu_\tau(Y_{t-i}|\mathcal{F}_{t-i-1})+\sum_{j=1}^{q}\gamma_{j}^{\ast\ast\ast}|Y_{t-j}|,
\end{eqnarray}
where $\beta_{0}^{\ast\ast\ast}=\beta_{0}\mu_\tau(\varepsilon)$, $\beta_{i}^{\ast\ast\ast}=\beta_{i},i=1,\ldots,p$, $\gamma_{j}^{\ast\ast\ast}=\gamma_{j}\mu_\tau(\varepsilon),j=1,\ldots,q$.

The above transformation shows us that the autoregressive dynamics of tail-related risks stem from the dynamic of volatility. The coefficients $\beta_1, \ldots, \beta_p$ stay unchanged in all of the dynamics, whether for volatility or for different risks. The rest of the coefficients in the above dynamics are variational for different risk dynamics. It can be seen that these coefficients are the products of the original coefficients and the corresponding risks of $F_\varepsilon$. This fact indicates that the dynamic of a tail-related risk consists of two types of information: dynamic information from the volatility and distribution information from the innovation series.

Based on the above analysis, there are two noteworthy points to highlight here:
\begin{enumerate}[(1)]
\item
We show that under the GARCH framework, the dynamics of different tail-related risks share a specific structure with some latent and common coefficients. As discussed above, ES is non-elicitable; thus, the coefficients in Eq. (\ref{ESmodel}) cannot be estimated directly by solving a optimisation function. With the dynamic structures above, we can obtain the ES dynamic from the expectile dynamic based on the properties of expectile shown in Eq. (\ref{expectiletoessim}) - Eq. (\ref{htheta}). Since the innovation series has zero expectation, if we can determine $\tau=h_\varepsilon(\alpha)$ such that $Q_{\alpha}(\varepsilon)=\mu_\tau(\varepsilon)$, then the $\tau$-th expectile specification in Eq. (\ref{caremodel}) is equivalent to the $\alpha$-th quantile specification in Eq. (\ref{varmodel}). Moreover, the $\alpha$ ES specification can be obtained by multiplying both sides of Eq. (\ref{caremodel}) by a constant $c_\varepsilon$, where $c_\varepsilon=1+\frac{\tau}{(1-2\tau)\alpha}=1+\frac{h_\varepsilon(\alpha)}{(1-2 h_\varepsilon(\alpha))\alpha}$. This is the theoretical basis of \cite{taylor_estimating_2008}'s method, which captures the ES dynamic by estimating the coefficients in the expectile dynamics. In our framework, the distribution information of the innovation series should no longer be ignored because the optimal $\tau$ and $c_\varepsilon$ should be determined by the distribution of $\{\varepsilon_t\}_{t\geq1}$.

\item
Another insight is that the two part of coefficients cannot be divided in a single conditional risk dynamic. For example, we cannot capture $\beta_{0}$ or  $Q_\alpha(\varepsilon)$, even if we have a estimator of $\beta^{\ast}_{0}=\beta_{0}Q_\alpha(\varepsilon)$ from a single quantile regression. The problem can be solved if we consider a cluster of conditional risk dynamics, such as in the composite quantile method \citep{xiao_conditional_2009}. It fits several quantile dynamics to obtain dynamic quantile coefficients and use matrix decomposition to estimate the volatility dynamic coefficients. Here, we find that the conditional expectile shares a similar dynamic structure as conditional quantile. Hence, \cite{xiao_conditional_2009}'s method can be extended to a composite expectile form, which is CALS. Later, we show that a further technical adjustment could make this method more computationally efficient.
\end{enumerate}

We have clarified the dynamic structure of the tail-related risks under the GARCH framework. Next, we state some potential issues and our motivation.

\subsection{Motivation: the combination of CALS and empirical likelihood}
In this part, we summarise the fundamental issues in the estimating procedure and our corresponding approaches; this section describes our main motivation and the primary contributions of this work.

\textbf{The first issue is about the determinant of the mapping $\tau=h_\varepsilon(\alpha)$.} Under the GARCH framework, it has been shown that the dynamics of tail-related risks consist of two part of information: dynamic information from the volatility and distribution information from the innovation series. When we obtain the ES dynamics from the expectile dynamics, the distribution information of the innovation series is necessary to determine two important values: $\tau$ and $c_\varepsilon$. \cite{taylor_estimating_2008}'s grid-search method is based on the return series $Y_t$, which will lead to estimation error because the conditional distribution of $Y_t$ with respect to $\mathcal{F}_{t-1}$ is often different depending on the marginal distribution of $Y_t$. Moreover, the time-varying property of the determinant of $\tau=h_\varepsilon(\alpha)$ is noteworthy. \cite{taylor_estimating_2008} selects $\tau$ based on the data from the first moving window and keeps it fixed in the rest of the estimation procedure. It is not guaranteed that the distribution for the return series is not time-varying. In fact, the issue was verified in the empirical study of \cite{kuan_assessing_2009}, who noted that with a fixed probability-level conditional expectile, the corresponding tail probabilities of the conditional quantile are different in-sample and out-of-sample. Hence, a time-varying mapping $\tau=h_\varepsilon(\alpha)$ is more appropriate in practice.

To overcome these issues, we separate the innovation series from the return series, which involving a volatility estimating step first. Then, the empirical likelihood method is used to determine the value of $\tau$ for fixed quantile level $\alpha$ from the separated innovation series. The empirical likelihood method is performed in a rolling manner in each moving window so the determined value of $\tau$ can be updated over time.

\textbf{The second issue occurs in the procedure of estimating the volatility structure.} To separate the innovation series ($\varepsilon_t=\frac{Y_t}{\sigma_t}$) from the return series, an important step is to estimate the volatility structure. The composite quantile method proposed by \cite{xiao_conditional_2009} is an alternative choice to capture the volatility structure of a conditional heteroscedastic time series. Inspired by the analysis in section 2.2, we make some technical adjustment to perfect this method.

First, \cite{xiao_conditional_2009}'s method is extended to a composite expectile form, which is called CALS. The substitution from quantile to expectile is made for computational efficiency since the expectile regression has better properties in computation than quantile regression does; see the details in \cite{waltrup_expectile_2015}. The second adjustment is keeping the separation of two parts of coefficients in the corresponding optimisation function (for example, maintaining the separation of $\beta_{0}$ and $\mu_\tau(\varepsilon)$), rather than taking their product as a single coefficients. This adjustment can avoid the complex computations of matrix decomposition in \cite{xiao_conditional_2009}.

Based on the potential issues noted above, we combine the CALS method and the empirical likelihood method to estimate the conditional VaR, conditional ES and even conditional expectile simultaneously. The main model assumption of our method is the GARCH-type series with i.i.d innovation, which is different from the semiparametric autoregressive model. The main estimation procedure of the proposed method can be outlined as,
\begin{itemize}
\item Estimating the volatility structure of the return series by CLAS, and separating the innovation series from the return series;
\item Determining the mapping $\tau=h_\varepsilon(\alpha)$ by empirical likelihood, and estimating $Q_{\alpha}(\varepsilon)$, $ES_{\alpha}(\varepsilon)$ simultaneously;
\item Estimating conditional tail-related risks by combining the estimations above.
\end{itemize}
The three steps in the outline are detailed in Section 3, corresponding to the three subsections of Section 3.

The contributions of our work can be summarised as follows: First, we use a flexible nonparametric method, empirical likelihood, to determine the mapping $\tau=h_\varepsilon(\alpha)$ from the separated innovation series, which is competitive in terms of estimation accuracy. Second, we extend the composite quantile method to CALS for volatility estimation. To avoid complex computations of matrix decomposition, we maintain the separation of two parts of coefficients in the corresponding optimisation function. The adjusted method is more efficient in computation and has asymptotic properties similar to those of the composite quantile method. Finally, by analysing the dynamic structure of three conditional tail-related risk, we find that they have the same structure with common coefficients. The combinational method allows us to process the coefficients of volatility dynamic and coefficients of distribution separately. The consequent advantage is that we can perform the estimation of conditional VaR, conditional ES and conditional expectile simultaneously.

\section{Combination method of CALS and empirical likelihood}
In the previous section, we have presented an improved proposal: CALS for volatility estimation and empirical likelihood for the determination of $\tau=h_\varepsilon(\alpha)$. In this section, we present more methodological details of CALS and empirical likelihood and then give the complete estimation procedure of the combinational method.

\subsection{Volatility estimation using CALS}
The idea of volatility estimation using composite expectile is enlightened and improved from the methods of composite quantile in \cite{xiao_conditional_2009} and \cite{kai_local_2010}. We use a class of expectile specifications with common parameter constraints to fit the volatility structure.

Consider the following linear GARCH($p$,$q$) model:
\begin{eqnarray}\label{yt2}
Y_t=\sigma_t \cdot\varepsilon_t,
\end{eqnarray}
\begin{eqnarray}\label{garch2}
\sigma_t=\beta_{0}+\sum_{i=1}^{p}\beta_{i}\sigma_{t-i}+\sum_{j=1}^{q}\gamma_{j}|Y_{t-j}|.
\end{eqnarray}
Denote $A(L)=1-\sum_{i=1}^{p}\beta_{i}L^{i}$ and $B(L)=\sum_{j=1}^{q}\gamma_{j}L^{j-1}$ (where $L$ is the lagged operator) satisfying the invertible assumption [B1] (see section 4). With this assumption, we can obtain an ARCH($\infty$) representation of $\sigma_t$,
\begin{eqnarray}\label{ai}
\sigma_t=a_0+\sum_{i=1}^{\infty}a_i|Y_{t-i}|,
\end{eqnarray}
where the coefficients $a_i$ decrease geometrically, which is implied by the assumption [B1] (see detailed discussion in \cite{Koenker2006Quantile}). Without loss of generality, we normalised $a_0=1$ for identification. Substituting the foregoing ARCH($\infty$) representation into (\ref{yt2}) and (\ref{garch2}), we have
\begin{eqnarray}
Y_t=\left(a_0+\sum_{i=1}^{\infty}a_i|Y_{t-i}|\right)\varepsilon_t.
\end{eqnarray}
Denoting the truncation parameter by $m$, we use the following truncated ARCH($m$) model as an approximation of the real model:
\begin{eqnarray}\label{tqar}
Y_t=\left(a_0+\sum_{i=1}^{m}a_i|Y_{t-i}|\right)\varepsilon_t.
\end{eqnarray}

We construct a composite method, CALS, to fit the volatility structure, and it has a class of expectile autoregressive specifications as
\begin{eqnarray}
\mu_{\tau_k}(Y_t|\mathcal{F}_{t-1})=\mu_{\tau_k}(\varepsilon)\left(a_0+\sum_{i=1}^{m}a_i|Y_{t-i}|\right), k=1,\ldots,K,
\end{eqnarray}
where $\{\tau_k\}_{k=1}^{K}$ is a class of the expectile significance level. The advantage of this method is that a class of expectile autoregressive models can fully exploit the potential information about the volatility structure. Here, expectile specifications with different significance levels share the same parameter structure, with common parameters $a_0$ to $a_m$ for the volatility structure and a specific parameter $\mu_{\tau_k}$ for different significance levels. Essentially, it imposes parameter constraints on different expectile specifications, and this is a big difference from the composite quantile method in \cite{xiao_conditional_2009}.

For convenience of expression, we denote the parameters in the CALS formulas above as
\begin{eqnarray}
\theta\triangleq (\vartheta^T,\eta^T)^T,
\end{eqnarray}
where
\begin{eqnarray}
\vartheta=(u_1,\ldots,u_k)^T=(\mu_{\tau_1}(\varepsilon),\ldots,\mu_{\tau_K}(\varepsilon))^T,
\end{eqnarray}
\begin{eqnarray}
\eta=(a_0,a_1,\ldots,a_m)^T,
\end{eqnarray}
and $a_0$ is fixed as 1 for identification, as is its estimator $\tilde{a}_0$. The parameters $\vartheta$ involve the distribution information of innovation sereis, and the parameters $\eta$ involve the dynamic information of volatility structure. Here, as we discussed in Section 2, we maintain them separation in the loss function.

Denote $x_{t,(m)}=(1,|Y_{t-1}|,\ldots,|Y_{t-m}|)^T$, and $w_t=(Y_t,1,|Y_{t-1}|,\ldots,|Y_{t-m}|)^T=(Y_t,x_{t,(m)}^T)^T$. We can estimate these parameters using CALS with the following expression:
\begin{eqnarray}\label{cals}
\tilde{\theta}& = & \arg\min_{\theta}\sum_{t=1}^{n}\rho(w_t,\theta)\\
& \triangleq & \arg\min_{\theta}\sum_{t=1}^{n}\sum_{k=1}^{K}\rho_{\tau_k}(Y_t-\mu_k\eta^{T}x_{t,(m)})
\end{eqnarray}
Then, we can obtain a preliminary estimation of $\sigma_t$ in-sample as
\begin{eqnarray}\label{pesi_sig}
\tilde{\sigma}_t=\tilde{\eta}_n^Tx_{t,(m)}=\tilde{a}_0+\sum_{i=1}^{m}\tilde{a}_i|Y_{t-i}|.
\end{eqnarray}

To improve estimation accuracy, we can refit the GARCH-(p,q) model by least squares since we already have a preliminary estimation of the volatility. Denote the parameters in the GARCH-(p,q) model by $\phi=(\beta_0,\gamma_1,\ldots,\gamma_q,\beta_1,\ldots \beta_p)^T$, which can be estimated by
\begin{eqnarray}\label{hatp}
\hat{\phi}=(\hat{\beta}_0,\hat{\gamma}_1,\ldots,\hat{\gamma}_q,\hat{\beta}_1,\ldots,\hat{\beta}_p)^T=\arg\min_{\phi}\sum_{t}\left(\tilde{\sigma}_t-\beta_{0}-\sum_{i=1}^{p}\beta_{i}\tilde{\sigma}_{t-i}-\sum_{j=1}^{q}\gamma_{j}|Y_{t-j}|\right)^2.
\end{eqnarray}
The corresponding volatility estimation is
\begin{eqnarray}
\hat{\sigma}_t=\hat{\beta}_{0}+\sum_{i=1}^{p}\hat{\beta}_{i}\tilde{\sigma}_{t-i}+\sum_{j=1}^{q}\hat{\gamma}_{j}|Y_{t-j}|.
\end{eqnarray}
\begin{rem}
Compared to the method of \cite{xiao_conditional_2009}, there are several improvements in our method. First, the asymmetric least squares has better computational properties than asymmetric least absolute; see \cite{waltrup_expectile_2015}. Second, the parameters constrained in CALS makes the model be free from the crossing problem, which often occurs in composite methods; see details in \cite{waltrup_expectile_2015}. Finally, it also avoids the complex matrix decomposition in \cite{xiao_conditional_2009}, making the method more computationally efficient. With lower computational complexity, the proposed method still shares similar asymptotic properties with that in \cite{xiao_conditional_2009}.
\end{rem}

\subsection{Empirical likelihood for determining $\tau=h_\varepsilon(\alpha)$}
Having obtained the estimation of volatility in-sample, a series of estimated innovation can be obtained by
\begin{eqnarray}\label{tvar}
\hat{\varepsilon}_t=\frac{Y_t}{\hat{\sigma}_t},
\end{eqnarray}
where $t=m+1,\ldots,n$. We determine the corresponding $\tau$ for fixed $\alpha$ based on the series of estimated innovation by the method of empirical likelihood since $\tau=h_\varepsilon(\alpha)$ depends on the marginal distribution of noise process $\{\varepsilon_t\}_{t\geq1}$.

We use the maximum empirical likelihood method to determine $\tau=h_\varepsilon(\alpha)$, which is theoretically based on the properties of expectile stated in Eq. (\ref{theoel}). Considering the propositions for innovation series, we have
\begin{eqnarray}
\left\{
\begin{aligned}
 &E[(\varepsilon-\mu_{\tau}(\varepsilon))I(\varepsilon<\mu_{\tau}(\varepsilon))+\frac{\tau}{1-2\tau}\varepsilon]-\frac{\tau}{1-2\tau}\mu_{\tau}(\varepsilon)=0,\\
 &E[I(\varepsilon<\mu_{\tau}(\varepsilon))]-\alpha=0.\\
\end{aligned}
\right.
\end{eqnarray}
With the equations above, for a fixed $\alpha$, we can construct the empirical likelihood function and maximum empirical likelihood estimation of $(\mu_{\tau}(\varepsilon),\tau)$. For notational convenience, in this subsection, we denote the true value of $(\mu_{\tau}(\varepsilon),\tau)$ by $(\mu_0,\tau_0)$, and the notation $\tau$ will be used in the optimisation function of empirical likelihood.

Suppose that $\hat{\varepsilon}_1,\ldots,\hat{\varepsilon}_n$ are the estimated innovation from (\ref{tvar}). For $i=1,\ldots,n$ and fixed $\alpha$, let
\begin{eqnarray}
W_i(\mu,\tau)=\left(W_{i1}(\mu,\tau),\ W_{i2}(\mu,\tau)\right)^T=\left((\hat{\varepsilon}_i-\mu)I(\hat{\varepsilon}_i<\mu)+\frac{\tau}{1-2\tau}\hat{\varepsilon}_i-\frac{\tau}{1-2\tau}\mu,\quad I(\hat{\varepsilon}_i<\mu)-\alpha\right)^T.
\end{eqnarray}
Then, the  empirical likelihood function of $(\mu,\tau)$ can be expressed as
\begin{eqnarray}
L(\mu,\tau)=\sup\{\prod_{i=1}^{n}(np_i)|p_i>0,i=1,2,\ldots,n,\sum_{i=1}^{n}p_i=1,\sum_{i=1}^{n}p_i W_i(\mu,\tau)=0\}.
\end{eqnarray}
Making use of Lagrange multipliers, we can obtain
\begin{eqnarray}\label{inclem}
l(\mu,\tau)\triangleq-2\log L(\mu,\tau)=2\sum_{i=1}^{n}\log(1+\lambda^T W_i(\mu,\tau)),
\end{eqnarray}
where $\lambda=\lambda(\mu,\tau)$ is a two-dimensional vector associated with $(\mu,\tau)$ but has no explicit expression. The relationship of $\lambda$ and $(\mu,\tau)$ is as follows:
\begin{eqnarray}
\sum_{i=1}^{n}\frac{W_i(\mu,\tau)}{1+\lambda^T W_i(\mu,\tau)}=0.
\end{eqnarray}
Next, we can obtain the maximum empirical likelihood estimate for $(\mu_0,\tau_0)$, which is defined as
\begin{eqnarray}\label{hmutau}
(\hat{\mu},\hat{\tau})=\arg\min_{(\mu,\tau)} l(\mu,\tau).
\end{eqnarray}

Here, we use the estimated innovation series $\{\hat{\varepsilon}_t\}_{t\leq n}$ to estimate $\tau$ and $\mu_{\tau}(\varepsilon)$ for fixed $\alpha$ via the empirical likelihood method mentioned above. So far, we have described the method for determining the corresponding $\tau$. It is more reasonable than the method of \citet{taylor_estimating_2008} since our method authentically uses information about the conditional distribution of $Y_t$. Furthermore, the empirical likelihood method is completely distribution-assumption-free and data-driven.

\subsection{Estimating conditional tail-related risks}
At the end of the section, we summarise the combination method of CALS and empirical likelihood for estimating conditional tail-related risks. We are going to estimate the conditional VaR and ES of series conditional on information prior to time $T$. Given a suitable length of moving window $n$, we obtain $\hat{\sigma}_t$ from the observations in the moving window by the method of CALS. Additionally, the estimated innovation series is obtained by
\begin{eqnarray}
\hat{\varepsilon}_t=\frac{Y_t}{\hat{\sigma}_t},
\end{eqnarray}
where $t=m+1,\ldots,n$. From this series of $\hat{\varepsilon}_t$, we obtain the estimation $(\hat{\mu},\hat{\tau})$ using empirical likelihood. Then, the $\alpha$-quantile estimator of $\varepsilon$ is represented as
\begin{eqnarray}\label{varpre1}
\widehat{Q}_\alpha(\varepsilon)=\hat{\mu},
\end{eqnarray}
and the corresponding estimator of ES is
\begin{eqnarray}\label{espre1}
\widehat{ES}_\alpha(\varepsilon)=\left(1+\frac{\hat{\tau}}{(1-2\hat{\tau})\alpha}\right)\hat{\mu}-\frac{\hat{\tau}}{(n-m)(1-2\hat{\tau})\alpha}\sum_{t}\hat{\varepsilon}_t.
\end{eqnarray}
The conditional tail-related risks of $Y_T$ prediction can be obtained from the product of volatility prediction and tail-related risks of the innovation series. Since we have a preliminary estimation $\tilde{\sigma}_t$, a rational choice is predicting the conditional tail-related risks of $Y_T$ by
\begin{eqnarray}\label{eq50}
\widetilde{Q}_\alpha(Y_T|\mathcal{F}_{T-1})=\tilde{\sigma}_T\cdot\widehat{Q}_\alpha(\varepsilon),
\end{eqnarray}
\begin{eqnarray}\label{eq51}
\widetilde{ES}_\alpha(Y_T|\mathcal{F}_{T-1})=\tilde{\sigma}_T\cdot\widehat{ES}_\alpha(\varepsilon),
\end{eqnarray}
where $\tilde{\sigma}_T=\tilde{a}_0+\sum_{i=1}^{m}\tilde{a}_i|Y_{T-i}|$. Additionally, we can predicate the conditional tail-related risks based on another volatility estimation $\hat{\sigma}_T$ as follows:
\begin{eqnarray}\label{eq52}
\widehat{Q}_\alpha(Y_T|\mathcal{F}_{T-1})=\hat{\sigma}_T\cdot\widehat{Q}_\alpha(\varepsilon),
\end{eqnarray}
\begin{eqnarray}\label{eq53}
\widehat{ES}_\alpha(Y_T|\mathcal{F}_{T-1})=\hat{\sigma}_T\cdot\widehat{ES}_\alpha(\varepsilon),
\end{eqnarray}
where $\hat{\sigma}_T=\hat{\beta}_{0}+\sum_{i=1}^{p}\hat{\beta}_{i}\tilde{\sigma}_{T-i}+\sum_{j=1}^{q}\hat{\gamma}_{j}|Y_{T-j}|.$ In the simulation section, we show that both of two estimation are competitive, but the 'hat' one (Eqs. (\ref{eq52}) - (\ref{eq53})) outperforms the 'tilde' one (Eqs. (\ref{eq50}) - (\ref{eq51})).

So far, we have described the procedure for estimating the conditional tail-related risk by using our proposed method. Now, let us present some details about the rule of thumb for selecting the tuning parameters in our method. The length of moving window, $n$, must be determined since it is crucial to the estimation. Consider the overall asymptotic properties of the method, a bigger $n$ is preferable. However, in reality, financial time series are model-changing frequently, in terms of both the heteroscedasticity structure and noise distribution. An overly large $n$ may lead to undesirable model errors. The selection of $n$ is also sensitive to the quantile level $\alpha$. Based on simulations, a moving window with length from 500 to 1000 is suitable for $\alpha$ is not too extreme, and longer moving windows are necessary for situations with a more extreme $\alpha$.

The truncation parameter $m=m(n)$ is a value associated with the sample size $n$. \citet{xiao_conditional_2009} proposed that $m$ should be a sufficiently large constant multiple of $\log(n)$ to ensure that the approximation error of $\tilde{\sigma}_t$ is sufficiently small. As the method for  preliminary estimation $\tilde{\sigma}_t$ in our paper is essentially similar to the method of \citet{xiao_conditional_2009}, we follow its selection, with $m(n)=O(n^{\frac{1}{4}})$.

The number of expectile specifications, $K$, and the corresponding expectile index $\{\tau_k\}_{k=1}^{K}$ in CALS must also be determined by analysts. Usually, we choose a uniform grid over the interval $(0,1)$ as the class of expectile index $\{\tau_k\}_{k=1}^{K}$. Although a larger K will improve the estimation accuracy, the computational expense along with an increase in K and robustness of estimation when some $\tau_k$ approaching 0 or 1 should be considered. As verified via simulations, a uniform grid over the interval $(0,1)$ with a length $K$ from 9 to 19 is an appropriate choice for not-too-extreme $\alpha$.

\section{Asymptotic properties of the combination estimation}
In this section, we state the asymptotic properties of the proposed methods as theorems, and the corresponding proofs are all presented in the appendix.
\subsection{Asymptotic properties of CALS estimation}
Let $\theta_0=(\vartheta_0^T,\eta_0^T)^T$ be the minimizer of $\mathbf{E}[\rho(w_t,\theta)]=\mathbf{E}[\sum_{k=1}^{K}\varphi_k(w_t,\theta)v_k^2(w_t,\theta)]$. Consider the estimation of $\theta_0$ derived from the CALS in Eq. (\ref{cals}). The consistency and asymptotic normality of the CALS estimator are given by \emph{Theorem 4.1} and \emph{Theorem 4.2}. We first give some necessary conditions for these results.

\begin{ass}
A1. $w_t=(Y_t,x_{t,(m)}^T)^T$ is strictly stationary and ergodic and has the probability density function $f_w(w_t) = g(y_t|x_t)h(x_t)$ with respect to the measure $\upsilon_w=\pi\times \upsilon_x$, where $f_w(w_t)$ is continuous in $y_t$ for almost all $x_t$ and $\pi$ denotes the Lebesque measure on the $\mathbb{R}$.
\end{ass}

\begin{ass}
A2. There is a $\delta>0$ such that $\int|w_t|^{4+\delta}g(y_t|x_t)h(x_t)d\upsilon_w<\infty$, where $|\cdot|$ denotes the infinite norm in this article.
\end{ass}

\begin{ass}
A3. $\theta\in \Theta \subset \mathbb{R}^{K+m}$, where $\Theta$ is compact.
\end{ass}

\begin{ass}
A4. $\mathbf{E}[x_{t,(m)} x_{t,(m)}^T]$ is nonsingular.
\end{ass}

These assumptions are common in asymmetric least squares regression; see \cite{newey_asymmetric_1987}. Under these regularity conditions, we have the following theorems about the asymptotic properties of CALS estimation.
\begin{thm}\label{thm1}
Under Assumptions [A1]-[A3], there is a unique minimiser, $\theta_0$, of the object function $\mathbf{E}[\rho(w_t,\theta)]$, and the CALS estimator $\tilde{\theta}$ satisfies, $\tilde{\theta}\stackrel{p}{\rightarrow}\theta_0$, as $n\rightarrow\infty$.
\end{thm}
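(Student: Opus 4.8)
The plan is to treat \emph{Theorem 4.1} as a standard consistency result for an M-estimator whose per-observation loss is convex in the residual, following the classical argument of \cite{newey_asymmetric_1987} adapted to the stationary ergodic and composite setting. Write $Q_n(\theta)=\frac{1}{n}\sum_{t=1}^{n}\rho(w_t,\theta)$ and $Q(\theta)=\mathbf{E}[\rho(w_t,\theta)]=\mathbf{E}[\sum_{k=1}^{K}\varphi_k(w_t,\theta)v_k^{2}(w_t,\theta)]$, so that $\tilde{\theta}=\arg\min_{\theta\in\Theta}Q_n(\theta)$. The overall strategy is the usual three moves: (i) show that $Q$ is finite on the compact set $\Theta$ and has a unique minimiser $\theta_0$; (ii) upgrade the pointwise ergodic convergence $Q_n(\theta)\to Q(\theta)$ to uniform convergence over $\Theta$; and (iii) invoke the standard argmin-consistency lemma to conclude $\tilde{\theta}\stackrel{p}{\rightarrow}\theta_0$.

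For step (i), I would first record that each summand $\rho_{\tau_k}(r)=\varphi_k r^{2}$ equals $\tau_k r^{2}$ for $r\geq 0$ and $(1-\tau_k)r^{2}$ for $r<0$, hence is $C^{1}$ and strictly convex in its scalar argument for $\tau_k\in(0,1)$. Writing the residual as $v_k=Y_t-u_k\eta^{T}x_{t,(m)}$, the population criterion $\mathbf{E}[\rho_{\tau_k}(Y_t-\xi)]$ over $\mathcal{F}_{t-1}$-measurable predictors $\xi$ is minimised uniquely at the conditional expectile $\mu_{\tau_k}(Y_t|\mathcal{F}_{t-1})=\mu_{\tau_k}(\varepsilon)\,\eta_0^{T}x_{t,(m)}$. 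Since at $\theta_0$ all $K$ conditional expectiles are matched simultaneously, $\theta_0$ minimises the composite $Q$, and finiteness of $Q$ on $\Theta$ follows from the $4+\delta$ moment bound in [A2], the integrand being quadratic in $w_t$ with $\Theta$ compact. The delicate part is uniqueness: because $Q$ is a sum of terms each bounded below by its own conditional-expectile minimum, any other minimiser $\theta^{\ast}$ must attain every term's minimum, forcing $u_k^{\ast}(\eta^{\ast})^{T}x_{t,(m)}=\mu_{\tau_k}(\varepsilon)\,\eta_0^{T}x_{t,(m)}$ almost surely for each $k$. The identification constraint $a_0=1$ then fixes the common scale, and the non-degeneracy of the regressor vector $x_{t,(m)}$ supplied by the distributional structure in [A1] rules out its components lying in a proper affine subspace, so $\theta^{\ast}=\theta_0$.

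For step (ii), I would verify a uniform law of large numbers for the stationary ergodic sequence $\{w_t\}$. Continuity of $\theta\mapsto\rho(w_t,\theta)$ holds for each fixed $w_t$, and an integrable envelope $\sup_{\theta\in\Theta}\rho(w_t,\theta)\leq D(w_t)$ with $\mathbf{E}[D(w_t)]<\infty$ can be constructed from [A2], since on the compact $\Theta$ the loss is dominated by a polynomial of degree two in $|w_t|$. Combining this envelope with the continuity, the compactness in [A3], and the ergodic theorem via a finite-cover argument yields $\sup_{\theta\in\Theta}|Q_n(\theta)-Q(\theta)|\stackrel{p}{\rightarrow}0$. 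Step (iii) is then routine: uniform convergence of $Q_n$ to a continuous limit $Q$ with a well-separated unique minimiser $\theta_0$ on compact $\Theta$ gives $\tilde{\theta}\stackrel{p}{\rightarrow}\theta_0$ by the standard argmin-consistency argument.

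I expect the main obstacle to be the uniqueness in step (i) rather than the limit theory. The composite criterion is bilinear, and hence not jointly convex, in $(\vartheta,\eta)$ through the products $u_k\eta^{T}x_{t,(m)}$, so convexity-based identification is unavailable and I must argue instead through the separate conditional-expectile characterisation together with the normalisation $a_0=1$ to eliminate the scale ambiguity $(u_k,\eta)\mapsto(cu_k,\eta/c)$. The uniform convergence, by contrast, is a fairly mechanical envelope-plus-ergodic-theorem exercise once the moment bound [A2] is in hand.
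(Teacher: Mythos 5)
Your proposal is correct and follows essentially the same route as the paper: the paper also proceeds by (a) establishing an integrable quadratic envelope for the composite loss on the compact $\Theta$ (via the Newey--Powell bound $|\rho_k(w_t,\theta_k)|\leq|w_t|^2(d_k+d'_k|\theta_k|^2)$ and [A2]), (b) invoking a uniform weak law of large numbers for stationary ergodic sequences, (c) obtaining uniqueness of $\theta_0$ from the fact that each single asymmetric-least-squares summand has a unique minimiser in the product parametrisation $\theta_k=u_k\eta$, with the normalisation $a_0=1$ removing the scale ambiguity, and (d) concluding by the standard argmin-consistency lemma. The only cosmetic differences are that the paper cites the uniqueness of the single-ALS minimiser from \cite{newey_asymmetric_1987} and the UWLLN from \cite{rao_relations_1962}/\cite{RePEc:eee:ecochp:4-45} rather than rederiving them via the conditional-expectile characterisation and a finite-cover ergodic argument as you do.
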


\begin{thm}\label{thm2}
Under assumptions [A1]-[A4], $$\sqrt{n}(\tilde{\theta}-\theta_0)\stackrel{d}{\rightarrow}N(\mathbf{0},\Xi),$$ as $n\rightarrow\infty$, where $\Xi={\Sigma}^{-1}{\Omega}{\Sigma}^{-1}$, with
\begin{eqnarray*}
\Omega=\lim_{n\rightarrow \infty}\mathbf{Var}\left[\frac{1}{\sqrt{n}}\sum_{t}\frac{\partial \rho(w_t,\theta)}{\partial \theta}|_{\theta=\theta_0}\right],
\end{eqnarray*}
and
\begin{eqnarray*}
\Sigma=\mathbf{E}\left[\frac{\partial^2 \rho(w_t,\theta)}{\partial\theta\partial\theta'}|_{\theta=\theta_0}\right].
\end{eqnarray*}
More detailed presentations of ${\Omega}$ and ${\Sigma}$ can be found in the appendix.
\end{thm}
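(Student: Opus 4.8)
The plan is to treat $\tilde{\theta}$ as a Z-estimator and run the standard asymptotic-normality argument for M-estimators, exploiting the fact that the asymmetric squared loss $\rho_{\tau_k}$ is continuously differentiable (its only non-smoothness is the jump in the \emph{second} derivative at the origin, in contrast to the quantile loss of \cite{xiao_conditional_2009}). First I would invoke the consistency result of Theorem~\ref{thm1} to localise $\tilde{\theta}$ in a shrinking neighbourhood of $\theta_0$; combined with the compactness assumption [A3], this places $\tilde{\theta}$ in the interior of $\Theta$ with probability tending to one. Since $\theta\mapsto\rho(w_t,\theta)$ is $C^1$, the minimiser then satisfies the first-order condition $\sum_{t=1}^{n}\psi(w_t,\tilde{\theta})=\mathbf{0}$, where $\psi(w_t,\theta)\triangleq\partial\rho(w_t,\theta)/\partial\theta$. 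Writing $G_n(\theta)=n^{-1}\sum_t\psi(w_t,\theta)$ and $G(\theta)=\mathbf{E}[\psi(w_t,\theta)]$, the whole proof reduces to obtaining an asymptotically linear expansion of $G_n$ about $\theta_0$.

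Two analytic ingredients drive the argument. The first is a central limit theorem for the normalised score at the truth, $n^{-1/2}\sum_t\psi(w_t,\theta_0)\stackrel{d}{\rightarrow}N(\mathbf{0},\Omega)$. Here I would exploit the GARCH structure: with $r_{tk}=Y_t-\mu_{k,0}\eta_0^Tx_{t,(m)}$, each component of $\psi(w_t,\theta_0)$ is a linear combination of the terms $2|\tau_k-I(r_{tk}<0)|r_{tk}$, and because $\varepsilon_t$ is independent of $\mathcal{F}_{t-1}$ while $\eta_0^Tx_{t,(m)}\approx\sigma_t$ is $\mathcal{F}_{t-1}$-measurable, the defining first-order condition of the expectile $\mu_{\tau_k}(\varepsilon)$ yields $\mathbf{E}[\psi(w_t,\theta_0)\mid\mathcal{F}_{t-1}]=\mathbf{0}$. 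Thus $\{\psi(w_t,\theta_0),\mathcal{F}_t\}$ is a stationary, ergodic martingale difference sequence by [A1], with finite second moment guaranteed by the $4+\delta$ moment bound [A2], and the martingale CLT delivers the limit with $\Omega=\lim_n\mathbf{Var}[n^{-1/2}\sum_t\psi(w_t,\theta_0)]$. The second ingredient is that the population score $G$ is continuously differentiable with $\nabla G(\theta_0)=\Sigma$, even though the sample Hessian is discontinuous: since $Y_t\mid x_t$ admits a density under [A1], integration smooths the jump of $\partial\psi/\partial\theta$ at $r_{tk}=0$, so differentiation under the integral sign is legitimate and produces the stated $\Sigma=\mathbf{E}[\partial^2\rho/\partial\theta\partial\theta'|_{\theta=\theta_0}]$; nonsingularity of $\Sigma$ then follows from [A4] together with the distinctness of the levels $\tau_1,\dots,\tau_K$.

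Combining the two ingredients, I would establish stochastic equicontinuity of the empirical score process, i.e. $\sqrt{n}\,[(G_n-G)(\tilde{\theta})-(G_n-G)(\theta_0)]=o_p(1)$ over $o_p(1)$-neighbourhoods of $\theta_0$, with the envelope and Lipschitz bounds supplied by [A2]. A first-order Taylor expansion of the smooth map $G$, using $G(\theta_0)=\mathbf{0}$, then gives
\begin{eqnarray*}
\mathbf{0}=G_n(\tilde{\theta})=G_n(\theta_0)+\Sigma(\tilde{\theta}-\theta_0)+o_p(\|\tilde{\theta}-\theta_0\|)+o_p(n^{-1/2}),
\end{eqnarray*}
so that $\sqrt{n}(\tilde{\theta}-\theta_0)=-\Sigma^{-1}\,n^{-1/2}\sum_t\psi(w_t,\theta_0)+o_p(1)$. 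Slutsky's theorem and the score CLT then yield $\sqrt{n}(\tilde{\theta}-\theta_0)\stackrel{d}{\rightarrow}N(\mathbf{0},\Sigma^{-1}\Omega\Sigma^{-1})=N(\mathbf{0},\Xi)$.

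I expect the main obstacle to be the second-order non-smoothness of the sample criterion: the sample Hessian $n^{-1}\sum_t\partial\psi(w_t,\theta)/\partial\theta'$ is discontinuous in $\theta$ at the data-dependent points $r_{tk}=0$, so neither a naive uniform law of large numbers for the Hessian nor a second-order Taylor expansion of $G_n$ applies directly. Controlling this is exactly what the stochastic-equicontinuity step (equivalently, a Newey--Powell-type argument in the spirit of \cite{newey_asymmetric_1987}) is for, since it works with the \emph{smoothed} population score and bounds the oscillation of $G_n-G$ over vanishing neighbourhoods. A secondary technical point is propagating the stationarity, ergodicity and moment conditions through the ARCH($m$) truncation of $\sigma_t$ and verifying that the resulting approximation error is negligible at the $\sqrt{n}$ scale, so that replacing $\sigma_t$ by $\eta_0^Tx_{t,(m)}$ in the martingale-difference step does not perturb the limiting distribution.
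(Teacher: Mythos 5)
Your overall architecture coincides with the paper's own proof: the paper also establishes Theorem \ref{thm2} via the route of Theorem 3 in \cite{huber1967} (following \cite{newey_asymmetric_1987}) that you describe --- exploiting that the asymmetric squared loss is $C^1$, computing $\psi=\partial\rho/\partial\theta$ explicitly, showing the population criterion is twice continuously differentiable with Hessian $\Sigma$ made nonsingular by [A4], verifying a modulus-of-continuity bound $\sup_{|\theta'-\theta|<\kappa}|\psi(w_t,\theta')-\psi(w_t,\theta)|\leq 2K|x_{t,(m)}|^2d^2\kappa$ which plays exactly the role of your stochastic-equicontinuity step, and finishing with a Taylor expansion of the population score and Slutsky. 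Up to the CLT for the score, your proposal and the paper's proof are the same argument.

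The score CLT is where you genuinely diverge, and it is where your proposal has a gap. You claim $\{\psi(w_t,\theta_0),\mathcal{F}_t\}$ is a martingale difference sequence because the expectile first-order condition of $\varepsilon_t$ annihilates the conditional mean. That computation is valid only if the truncated ARCH($m$) specification is exact, i.e. $\sigma_t=\eta_0^Tx_{t,(m)}$ almost surely, so that $v_k(w_t,\theta_0)=\eta_0^Tx_{t,(m)}\left(\varepsilon_t-\mu_{\tau_k}(\varepsilon)\right)$. But Theorem \ref{thm2} is stated under [A1]--[A4] alone, with $m$ fixed and $\theta_0$ defined as the pseudo-true minimiser of $\mathbf{E}[\rho(w_t,\theta)]$; the truncation assumptions [B1]--[B2], which control $\sigma_t-\eta_0^Tx_{t,(m)}$, belong to Proposition \ref{prop1} and Corollary \ref{cor2} and are not available here. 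Under [A1]--[A4] one only has the unconditional condition $\mathbf{E}[\psi(w_t,\theta_0)]=\mathbf{0}$; the conditional mean given $\mathcal{F}_{t-1}$ need not vanish, and for fixed $m$ the discrepancy is a fixed-order bias rather than $o_p(n^{-1/2})$, so your closing remark that the truncation error ``does not perturb the limiting distribution'' cannot be justified at the $\sqrt{n}$ scale within this theorem. The statement itself reflects this: the paper defines $\Omega$ as the long-run variance $\lim_n\mathbf{Var}[n^{-1/2}\sum_t\psi(w_t,\theta_0)]$, which carries autocovariance terms precisely because the score is not assumed to be a martingale difference; your martingale CLT would instead force $\Omega=\mathbf{E}[\psi(w_t,\theta_0)\psi(w_t,\theta_0)^T]$, a different matrix in general. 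To repair the proof you must either (i) add the explicit assumption that the ARCH($m$) model is correctly specified, in which case your MDS argument goes through and $\Omega$ collapses to the outer-product form, or (ii) do what the paper does and invoke a CLT for the stationary sequence $\{\psi(w_t,\theta_0)\}$ keeping $\Omega$ as the long-run variance --- noting that this, to be rigorous, requires a mixing or near-epoch-dependence condition on $\{w_t\}$ beyond ergodicity, a point the paper itself glosses over.
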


In fact, for volatility estimation, we only need part of the parameters in $\theta$, which are $\eta=(a_0,a_1,\ldots,a_m)^T$. We rewrite the asymptotic property of these parameters as in the following corollary.
\begin{cor}\label{cor1}
Under Assumptions [A1]-[A4], the CALS estimation of $\eta$ satisfies $$\tilde{\eta}\stackrel{p}{\rightarrow}\eta_0,$$ and $$\sqrt{n}(\tilde{\eta}-\eta_0)\stackrel{d}{\rightarrow}N(\mathbf{0},\Xi_{22}),$$ as $n\rightarrow\infty$, where $\Xi_{22}$ is the principal submatrix of $\Xi$ from $(K+1)$th line to $(K+m+1)$th line. Alternatively, it can be presented as $\Xi_{22}={\Omega}_{22}^{-1}{\Sigma}_{22}{\Omega}_{22}^{-1}$, with $${\Omega}_{22}=\lim_{n\rightarrow \infty}\mathbf{Var}\left[\frac{1}{\sqrt{n}}\sum_{t}\frac{\partial \rho(w_t,\theta)}{\partial \eta}|_{\theta=\theta_0}\right],$$ and $${\Sigma}_{22}=\mathbf{E}\left[\frac{\partial^2 \rho(w_t,\theta)}{\partial\eta\partial\eta'}|_{\theta=\theta_0}\right].$$
\end{cor}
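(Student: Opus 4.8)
The plan is to read Corollary~\ref{cor1} as the marginalisation of Theorems~\ref{thm1} and \ref{thm2} onto the $\eta$-coordinates of $\theta=(\vartheta^{T},\eta^{T})^{T}$, so that no new limit theorem is needed: every statement about $\tilde\eta$ should be obtained by pushing the full-vector limits through a fixed linear map. Concretely, I would introduce the selection (projection) matrix $P=(\mathbf{0}_{(m+1)\times K},\ I_{m+1})$, which satisfies $\tilde\eta=P\tilde\theta$ and $\eta_{0}=P\theta_{0}$, and then apply the continuous mapping theorem to the continuous linear map $\theta\mapsto P\theta$.

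For consistency, Theorem~\ref{thm1} gives $\tilde\theta\stackrel{p}{\rightarrow}\theta_{0}$, and continuity of $P$ immediately yields $\tilde\eta=P\tilde\theta\stackrel{p}{\rightarrow}P\theta_{0}=\eta_{0}$. For asymptotic normality, Theorem~\ref{thm2} gives $\sqrt{n}(\tilde\theta-\theta_{0})\stackrel{d}{\rightarrow}N(\mathbf{0},\Xi)$; since a linear image of a Gaussian vector is Gaussian, applying $P$ gives
\[
\sqrt{n}(\tilde\eta-\eta_{0})=P\sqrt{n}(\tilde\theta-\theta_{0})\stackrel{d}{\rightarrow}N(\mathbf{0},P\Xi P^{T}).
\]
Because $P$ reads off the last $m+1$ coordinates, $P\Xi P^{T}$ is exactly the bottom-right $(m+1)\times(m+1)$ principal submatrix of $\Xi$, i.e. the block indexed from the $(K+1)$-th to the $(K+m+1)$-th entry. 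This proves both displayed conclusions with $\Xi_{22}=P\Xi P^{T}$, and this part of the argument is entirely routine.

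The only substantive point is the alternative representation of $\Xi_{22}$ as a sandwich built from the $\eta$-only quantities $\Omega_{22}$ and $\Sigma_{22}$ alone. Partitioning $\Sigma$ and $\Omega$ conformably with $(\vartheta,\eta)$ into $2\times2$ blocks, the $\Omega_{22}$ and $\Sigma_{22}$ of the statement are precisely the lower-right blocks of $\Omega$ and $\Sigma$; however, the lower-right block of the sandwich $\Sigma^{-1}\Omega\Sigma^{-1}$ is not the naive product of lower-right blocks, because the partitioned-inverse formula gives $(\Sigma^{-1})_{22}=(\Sigma_{22}-\Sigma_{21}\Sigma_{11}^{-1}\Sigma_{12})^{-1}$. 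I would therefore have to control the cross-block $\Sigma_{12}$: the alternative form collapses to the $\eta$-blocks precisely when the $\vartheta$--$\eta$ cross-Hessian (equivalently, the asymptotic cross-covariance of the $\vartheta$- and $\eta$-scores) is negligible, so the decisive calculation is to differentiate the CALS loss $\rho_{\tau_k}(Y_t-\mu_k\eta^{T}x_{t,(m)})$ twice, evaluate at $\theta_{0}$ using $\eta_{0}^{T}x_{t,(m)}=\sigma_{t}$ and $Y_t=\sigma_t\varepsilon_t$, and check how the Schur-complement correction behaves. I expect this partitioned-matrix bookkeeping, rather than the limit theory, to be the main obstacle, since the cross-block need not vanish identically and must either be shown to drop out or be absorbed into $\Sigma_{22}$.
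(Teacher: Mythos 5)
Your projection argument for the two displayed limits is exactly the paper's own proof: the paper disposes of this corollary in one line (``it is obvious \ldots because $\eta$ is a part of $\theta$''), which is precisely your observation that $\tilde\eta=P\tilde\theta$ with $P=(\mathbf{0}_{(m+1)\times K},\ I_{m+1})$, so consistency and $\sqrt{n}(\tilde\eta-\eta_0)\stackrel{d}{\rightarrow}N(\mathbf{0},P\Xi P^{T})$ follow from Theorems \ref{thm1} and \ref{thm2} by the continuous mapping theorem, with $P\Xi P^{T}$ the stated principal submatrix. For this part you are, if anything, more careful than the paper.

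The issue you flag about the ``alternative'' representation is a genuine gap, but it is a gap in the paper rather than in your argument: the paper offers no justification for $\Xi_{22}=\Omega_{22}^{-1}\Sigma_{22}\Omega_{22}^{-1}$, and your Schur-complement objection is exactly right. Partitioning $\Sigma$ and $\Omega$ conformably with $(\vartheta,\eta)$, the $(2,2)$ block of $\Sigma^{-1}\Omega\Sigma^{-1}$ involves $(\Sigma^{-1})_{22}=(\Sigma_{22}-\Sigma_{21}\Sigma_{11}^{-1}\Sigma_{12})^{-1}$ together with the off-diagonal blocks of $\Omega$, so it reduces to a sandwich of $\eta$-blocks only when the cross terms vanish. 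They do not vanish here: from the expression $\Sigma=\sum_{k=1}^{K}\mathbf{E}[2\varphi_k(w_t,\theta_0)\xi_k(w_t,\theta_0)\xi_k^{T}(w_t,\theta_0)]$ in the paper's proof of Theorem \ref{thm2}, the $\vartheta$--$\eta$ cross block has $k$th row $-2u_{k}\mathbf{E}[\varphi_k(w_t,\theta_0)\,(\eta_0^{T}x_{t,(m)})\,x_{t,(m)}^{T}]$, which is generically nonzero. (Note also that the corollary's displayed formula swaps the roles of $\Omega$ and $\Sigma$ relative to $\Xi=\Sigma^{-1}\Omega\Sigma^{-1}$ in Theorem \ref{thm2}, presumably a typo for $\Sigma_{22}^{-1}\Omega_{22}\Sigma_{22}^{-1}$; even the corrected form fails for the reason above.) So the only statement that the projection argument actually delivers is $\Xi_{22}=P\Xi P^{T}$; the alternative formula would need either an explicit block-diagonality (vanishing cross-Hessian) hypothesis or a separate derivation, neither of which the paper supplies. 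Your instinct that this, and not the limit theory, is the substantive obstacle is correct.
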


Before we provide the asymptotic properties of volatility estimation $\tilde{\sigma}_t$ and $\hat{\sigma}_t$, we should discuss the error from approximating linear GARCH($p$,$q$) by the truncated ARCH($m$) and determine suitable truncation parameters $m(n)$. Following the conclusion of \cite{xiao_conditional_2009}, we present some necessary assumptions to bound the error from this part of approximating.
\begin{ass}
B1. The polynomials $A(L)=1-\sum_{i=1}^{p}\beta_{i}L^{i}$ and $B(L)=\sum_{j=1}^{q}\gamma_{j}L^{j-1}$, where $\beta_0$ and $\gamma_{1},\ldots,\gamma_{q}$ are positive, have no common zero points; $A(z)\neq 0$, for $|z|\leq 1$; and $B(z)\neq 0$, for $|z|\leq 1$.
\end{ass}

\begin{ass}
B2. The truncation parameter m satisfies $m(n)=c\log n$ for some constant $c>0$.
\end{ass}

Under Assumption B1, $A(L)$ is invertible, and the parameters $a_i$ in (\ref{ai}) decrease at a geometric rate. As a consequence, we have the following proposition given by \cite{xiao_conditional_2009}.
\begin{prop}\label{prop1}
Under Assumptions [B1]-[B2], there exists a positive constant $b<1$ such that $\sigma_t$ has approximation as $\sigma_t=x_{t,(m)}^T\eta_0+O_p(b^m)$. If we choose the constant $c$ in Assumption. B2 as $c=\frac{1}{-\log b}$, then $\sigma_t=x_{t,(m)}^T\eta_0+O_p(1/n)$.

\end{prop}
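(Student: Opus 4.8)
The plan is to make the ARCH($\infty$) representation explicit from the recursion and then estimate its tail. Writing Eq.~(\ref{garch2}) with the lag operator gives $A(L)\sigma_t = \beta_0 + B(L)|Y_{t-1}|$, since $A(L)\sigma_t = \sigma_t - \sum_{i=1}^{p}\beta_i\sigma_{t-i}$ and $B(L)|Y_{t-1}| = \sum_{j=1}^{q}\gamma_j|Y_{t-j}|$. Under Assumption B1 the polynomial $A(z)$ has no zero in the closed unit disk, so $A(L)$ is invertible with $A^{-1}(L)=\sum_{i\geq 0}\psi_i L^i$ convergent on a disk of radius strictly larger than one. Inverting yields $\sigma_t = A^{-1}(1)\beta_0 + A^{-1}(L)B(L)|Y_{t-1}|$, which is exactly representation (\ref{ai}) after normalising the constant term to $a_0=1$, with $a_i = [A^{-1}(L)B(L)]_{i-1}$ for $i\geq 1$.

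The decisive step is the geometric decay of these coefficients. Because every root $z_k$ of $A$ satisfies $|z_k|>1$, for any $r\in(1,\min_k|z_k|)$ there is a constant $C$ with $|\psi_i|\leq C r^{-i}$; convolving with the finite-degree polynomial $B$ preserves this rate, so setting $b=1/r\in(0,1)$ gives $|a_i|\leq C' b^{i}$ for all $i$. This is the classical fact for invertible autoregressive operators cited from \cite{Koenker2006Quantile} and \cite{xiao_conditional_2009}, and it is the only place where B1 enters in an essential way, so I would import it rather than re-derive it.

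With geometric decay in hand, the truncation error is the tail $\sigma_t - x_{t,(m)}^T\eta_0 = \sum_{i=m+1}^{\infty} a_i|Y_{t-i}|$, and I would bound it in mean using strict stationarity (Assumption A1) together with $\mathbf{E}|Y_t|<\infty$ implied by Assumption A2:
\[
\mathbf{E}\Bigl|\sum_{i=m+1}^{\infty} a_i|Y_{t-i}|\Bigr|
\leq \sum_{i=m+1}^{\infty}|a_i|\,\mathbf{E}|Y_{t-i}|
\leq C'\,\mathbf{E}|Y_0|\sum_{i=m+1}^{\infty} b^{i}
= O(b^{m}).
\]
Markov's inequality then upgrades this to $O_p(b^m)$, since $P(|R_m|>M b^m)\leq \mathbf{E}|R_m|/(M b^m)$ is a constant multiple of $1/M$ uniformly in $m$; this gives the first claim $\sigma_t = x_{t,(m)}^T\eta_0 + O_p(b^m)$.

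Finally, substituting the choice $c=-1/\log b$ from Assumption B2 gives $b^{m(n)}=b^{c\log n}=\exp\{(c\log b)\log n\}=\exp(-\log n)=1/n$ (with $m=\lfloor c\log n\rfloor$ only changing the rate by a bounded factor), so $O_p(b^m)$ becomes $O_p(1/n)$, which is the second claim. The hard part is genuinely only the geometric-decay estimate for the $a_i$; everything downstream is a routine first-moment bound plus Markov and the exponent arithmetic, so the bulk of the work is conceptual bookkeeping rather than new estimation.
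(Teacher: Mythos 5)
Your argument is correct, but note that the paper itself does not prove Proposition \ref{prop1} at all: it is imported directly from \cite{xiao_conditional_2009} (``we have the following proposition given by\ldots''), and no proof appears in the appendix. Your proposal therefore supplies what the paper delegates to the literature, and it does so along the standard lines: inversion of $A(L)$ under [B1], Cauchy-type geometric bounds $|a_i|\leq C'b^{i}$ on the coefficients of $A^{-1}(L)B(L)$, a first-moment bound on the tail $\sum_{i>m}a_i|Y_{t-i}|$, Markov's inequality, and the exponent arithmetic $b^{c\log n}=n^{-1}$. Two caveats are worth recording. First, your moment step uses strict stationarity and $\mathbf{E}|Y_t|<\infty$, i.e. Assumptions [A1]--[A2], which the bare phrase ``under [B1]--[B2]'' suppresses; this is not a defect of your proof but of the proposition's phrasing --- some integrability of $|Y_t|$ is genuinely needed to turn deterministic coefficient decay into an $O_p(b^m)$ statement, and in the paper the proposition is only ever invoked jointly with [A1]--[A4] (Corollary \ref{cor2}, Theorem \ref{thm3}). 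Second, Section 4.1 defines $\eta_0$ as the $\eta$-block of the minimiser of $\mathbf{E}[\rho(w_t,\theta)]$ for the truncated model, whereas your proof establishes the approximation with $\eta_0$ equal to the vector of true truncated ARCH($\infty$) coefficients $(a_0,a_1,\ldots,a_m)^T$; reconciling the two readings requires the standard (but here unstated) observation that the pseudo-true truncated-model minimiser differs from the truncated true coefficients by terms of the same $O(b^m)$ order. With that one sentence added, your proof is complete and self-contained, which is more than the paper itself offers for this statement.
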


With the conclusions above, we can obtain the asymptotic properties of the preliminary estimation, $\tilde{\sigma}_t$.
\begin{cor}\label{cor2}
Under Assumptions [A1]-[A4] and [B1]-[B2], conditional on the information prior to time $t$, the preliminary estimation, $\tilde{\sigma}_t$, has the following asymptotic properties,
$$\tilde{\sigma}_{t}\stackrel{p}{\rightarrow}\sigma_t,$$ and $$\sqrt{n}(\tilde{\sigma}_t-\sigma_t)\stackrel{d}{\rightarrow}N(0,\varpi^{a}_t),$$ as $n\rightarrow\infty$, where $\varpi^{a}_t=x_{t,(m)}^T\Xi_{22}x_{t,(m)}$.
\end{cor}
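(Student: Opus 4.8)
The plan is to exploit the linearity of $\tilde{\sigma}_t$ in the estimated coefficient vector $\tilde{\eta}$, combined with the approximation bound from Proposition \ref{prop1} and the limit theory for $\tilde{\eta}$ supplied by Corollary \ref{cor1}. The natural starting point is the decomposition
\begin{eqnarray*}
\tilde{\sigma}_t-\sigma_t = x_{t,(m)}^T(\tilde{\eta}-\eta_0) + \left(x_{t,(m)}^T\eta_0-\sigma_t\right),
\end{eqnarray*}
where the first term carries the estimation error in $\tilde{\eta}$ and the second term is the truncation error incurred by approximating the ARCH($\infty$) representation with its ARCH($m$) counterpart. Throughout, I would condition on $\mathcal{F}_{t-1}$, so that $x_{t,(m)}=(1,|Y_{t-1}|,\ldots,|Y_{t-m}|)^T$ is treated as a fixed (known) vector and only the randomness of $\tilde{\eta}$ drives the limiting behaviour.

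For the consistency statement, I would invoke Proposition \ref{prop1} with the choice $c=1/(-\log b)$ from Assumption B2, which renders the truncation term $O_p(1/n)=o_p(1)$, while Corollary \ref{cor1} supplies $\tilde{\eta}\stackrel{p}{\rightarrow}\eta_0$. Since $x_{t,(m)}$ is fixed conditional on $\mathcal{F}_{t-1}$, the continuous-mapping theorem gives $x_{t,(m)}^T(\tilde{\eta}-\eta_0)\stackrel{p}{\rightarrow}0$, and summing the two contributions yields $\tilde{\sigma}_t\stackrel{p}{\rightarrow}\sigma_t$.

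For asymptotic normality, I would scale the same decomposition by $\sqrt{n}$,
\begin{eqnarray*}
\sqrt{n}(\tilde{\sigma}_t-\sigma_t) = x_{t,(m)}^T\sqrt{n}(\tilde{\eta}-\eta_0) + \sqrt{n}\cdot O_p(1/n).
\end{eqnarray*}
The second term is $O_p(1/\sqrt{n})=o_p(1)$. For the leading term, Corollary \ref{cor1} gives $\sqrt{n}(\tilde{\eta}-\eta_0)\stackrel{d}{\rightarrow}N(\mathbf{0},\Xi_{22})$; because a fixed linear functional of an asymptotically normal vector is again asymptotically normal, one obtains $x_{t,(m)}^T\sqrt{n}(\tilde{\eta}-\eta_0)\stackrel{d}{\rightarrow}N(0,x_{t,(m)}^T\Xi_{22}x_{t,(m)})$. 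A final appeal to Slutsky's theorem absorbs the vanishing truncation term and delivers $\sqrt{n}(\tilde{\sigma}_t-\sigma_t)\stackrel{d}{\rightarrow}N(0,\varpi_t^a)$ with $\varpi_t^a=x_{t,(m)}^T\Xi_{22}x_{t,(m)}$, as claimed.

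The hard part will be that the dimension of $\eta_0$ and of $x_{t,(m)}$ is $m=m(n)\to\infty$ rather than fixed, so the central limit theorem borrowed from Corollary \ref{cor1} must be understood in the diverging-dimension sense of \cite{xiao_conditional_2009}. Two points then require care. First, one must check that the truncation error genuinely stays $o_p(1/\sqrt{n})$; this is precisely why Assumption B2 fixes $c=1/(-\log b)$, so that $b^m=O(1/n)$ and hence $\sqrt{n}\,b^m\to 0$. Second, one must ensure the scalar quadratic form $x_{t,(m)}^T\Xi_{22}x_{t,(m)}$ remains finite and well defined as $m$ grows, which follows from the geometric decay of the coefficients $a_i$ together with the moment bound in Assumption A2 on the entries $|Y_{t-i}|$ of $x_{t,(m)}$. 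Making the ``fixed linear functional'' step rigorous under a growing number of coordinates, rather than simply citing the finite-dimensional delta method, is the only genuinely delicate issue, and I would resolve it by following the truncation and projection arguments of \cite{xiao_conditional_2009}.
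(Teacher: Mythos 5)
Your proof is correct and is precisely the argument the paper has in mind: the paper itself omits the proof of Corollary \ref{cor2} entirely (declaring it ``trivial'' alongside Corollary \ref{cor1}), and the intended reasoning is exactly your decomposition $\tilde{\sigma}_t-\sigma_t = x_{t,(m)}^T(\tilde{\eta}-\eta_0)+\left(x_{t,(m)}^T\eta_0-\sigma_t\right)$ combined with Proposition \ref{prop1} to kill the truncation error, Corollary \ref{cor1} for the limit law of $\tilde{\eta}$, and Slutsky's theorem, all conditional on $\mathcal{F}_{t-1}$. Your closing caveat about the diverging dimension $m(n)$ of $\tilde{\eta}$ and $x_{t,(m)}$ is a genuine point of care that the paper glosses over, and deferring its resolution to the truncation arguments of \cite{xiao_conditional_2009} is the appropriate fix.
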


To make a consistent one-step post-sample prediction of condition variance, we should discuss the asymptotic properties of $\hat{\phi}$ in (\ref{hatp}). Let us present some notation before the discussion.

Let $z_t=\left[1,|Y_{t-1}|,\ldots,|Y_{t-q}|,\sigma_{t-1},\ldots,\sigma_{t-p}\right]^T$, and denote $$z_t(\tilde{\eta})=\left[1,|Y_{t-1}|,\ldots,|Y_{t-q}|,\sigma_{t-1}(\tilde{\eta}),\ldots,\sigma_{t-p}(\tilde{\eta})\right]^T,$$ since $\tilde{\sigma}_t$ can be expressed as $\tilde{\sigma}_t=\sigma_{t}(\tilde{\eta})=x_{t,(m)}^T\tilde{\eta}$. Correspondingly, we write
$$z_t(\eta_0)=\left[1,|Y_{t-1}|,\ldots,|Y_{t-q}|,\sigma_{t-1}(\eta_0),\ldots,\sigma_{t-p}(\eta_0)\right]^T.$$
Then, the estimator of $\phi_0$ from (\ref{hatp}) can be rewritten as
\begin{eqnarray}\label{ols}
\hat{\phi}=\arg\min_{\phi}\frac{1}{n}\sum_{t}\left(\sigma_{t}(\tilde{\eta})-z_t^T(\tilde{\eta})\phi\right)^2,
\end{eqnarray}
and has the following limiting behaviour.

\begin{thm}\label{thm3}
Under Assumptions [A1]-[A4] and [B1]-[B2], the estimator of $\phi$ from Eq. (\ref{hatp}) has the following asymptotic properties:
\begin{eqnarray}\hat{\phi}\stackrel{p}{\rightarrow}\phi_0,\end{eqnarray}
\begin{eqnarray}\sqrt{n}(\hat{\phi}-\phi_0)\stackrel{d}{\rightarrow}N(0,\Xi_{\phi}),\end{eqnarray}
as $n\rightarrow\infty$. $\Xi_{\phi}$ can be expressed as
\begin{eqnarray}\label{eq56}
\Xi_{\phi}=\Gamma_{10}^{-1}\Gamma_{20}\Xi_{22}(\Gamma_{10}^{-1}\Gamma_{20})^T\end{eqnarray}
with $\Gamma_{10}=\mathbf{E}[z_t(\eta_0)z_t^T(\eta_0)]$ and $\Gamma_{20}=\mathbf{E}\left[z_t(\eta)\left(\frac{d\sigma_t(\eta)}{d\eta^T}-\sum_{j=1}^{p}\beta_j\frac{d\sigma_{t-j}(\eta)}{d\eta^T}\right)\right]|_{\phi=\phi_0,\eta=\eta_0}$.
\end{thm}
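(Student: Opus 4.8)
The plan is to use that the minimisation in (\ref{ols}) is an ordinary least squares problem in $\phi$, so that $\hat{\phi}=\hat{\Gamma}_1(\tilde{\eta})^{-1}\,\frac{1}{n}\sum_t z_t(\tilde{\eta})\sigma_t(\tilde{\eta})$, where $\hat{\Gamma}_1(\eta)=\frac{1}{n}\sum_t z_t(\eta)z_t^T(\eta)$. Since $\sigma_t(\eta)=x_{t,(m)}^T\eta$ and each $\sigma_{t-i}(\eta)=x_{t-i,(m)}^T\eta$ entering $z_t(\eta)$ are linear in $\eta$, the map $\eta\mapsto\hat{\Gamma}_1(\eta)$ is a fixed quadratic whose coefficients are sample moments of the $x_{\cdot,(m)}$ and $|Y_{\cdot}|$; by the ergodic theorem (Assumption [A1]) together with the $(4+\delta)$-moment bound [A2] these coefficients converge, so $\hat{\Gamma}_1(\eta)\stackrel{p}{\rightarrow}\Gamma_{10}(\eta)$ for each $\eta$, and by continuous mapping with $\tilde{\eta}\stackrel{p}{\rightarrow}\eta_0$ from Corollary \ref{cor1}, $\hat{\Gamma}_1(\tilde{\eta})\stackrel{p}{\rightarrow}\Gamma_{10}=\mathbf{E}[z_t(\eta_0)z_t^T(\eta_0)]$, which I take to be nonsingular. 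The same reasoning gives $\frac{1}{n}\sum_t z_t(\tilde{\eta})\sigma_t(\tilde{\eta})\stackrel{p}{\rightarrow}\mathbf{E}[z_t(\eta_0)\sigma_t(\eta_0)]$, and because the GARCH recursion holds at the truth, Proposition \ref{prop1} yields $\sigma_t(\eta_0)=z_t^T(\eta_0)\phi_0+O_p(1/n)$, so this limit equals $\Gamma_{10}\phi_0$ and hence $\hat{\phi}\stackrel{p}{\rightarrow}\phi_0$.

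For the limiting distribution the central device is the pseudo-residual $r_t(\eta)\triangleq\sigma_t(\eta)-z_t^T(\eta)\phi_0$, in terms of which $\hat{\phi}-\phi_0=\hat{\Gamma}_1(\tilde{\eta})^{-1}\frac{1}{n}\sum_t z_t(\tilde{\eta})\,r_t(\tilde{\eta})$. The key observation is that, being linear in $\eta$, $r_t$ admits the \emph{exact} affine expansion $r_t(\tilde{\eta})=r_t(\eta_0)+\frac{\partial r_t}{\partial\eta^T}(\tilde{\eta}-\eta_0)$ with no remainder, where $\frac{\partial r_t}{\partial\eta^T}=\frac{d\sigma_t(\eta)}{d\eta^T}-\sum_{j=1}^{p}\beta_j\frac{d\sigma_{t-j}(\eta)}{d\eta^T}$ is precisely the factor appearing in $\Gamma_{20}$. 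Moreover Proposition \ref{prop1} forces the intercept to be negligible, $r_t(\eta_0)=\sigma_t(\eta_0)-z_t^T(\eta_0)\phi_0=O_p(1/n)$, so after $\sqrt{n}$-scaling it contributes $O_p(1/\sqrt{n})$.

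Substituting the affine expansion, I would split $\frac{1}{n}\sum_t z_t(\tilde{\eta})r_t(\tilde{\eta})$ into a baseline term $\frac{1}{n}\sum_t z_t(\eta_0)r_t(\eta_0)=O_p(1/n)$, the leading term $\big[\frac{1}{n}\sum_t z_t(\eta_0)\frac{\partial r_t}{\partial\eta^T}\big](\tilde{\eta}-\eta_0)$, and two cross terms each carrying either the $O_p(1/n)$ intercept or a second factor $(\tilde{\eta}-\eta_0)$ and therefore $o_p(1/\sqrt{n})$. By the ergodic theorem the bracket converges to $\Gamma_{20}$, so $\sqrt{n}(\hat{\phi}-\phi_0)=\hat{\Gamma}_1(\tilde{\eta})^{-1}\Gamma_{20}\,\sqrt{n}(\tilde{\eta}-\eta_0)+o_p(1)$. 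Invoking $\hat{\Gamma}_1(\tilde{\eta})\stackrel{p}{\rightarrow}\Gamma_{10}$, the asymptotic normality $\sqrt{n}(\tilde{\eta}-\eta_0)\stackrel{d}{\rightarrow}N(\mathbf{0},\Xi_{22})$ of Corollary \ref{cor1}, and Slutsky's theorem then delivers $\sqrt{n}(\hat{\phi}-\phi_0)\stackrel{d}{\rightarrow}N(\mathbf{0},\Gamma_{10}^{-1}\Gamma_{20}\Xi_{22}(\Gamma_{10}^{-1}\Gamma_{20})^T)$, which is the claimed $\Xi_{\phi}$.

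The conceptual heart of the result, and the main point to get right, is that this is a two-step (generated-regressor) estimator in which \emph{both} the response $\tilde{\sigma}_t$ and the regressors $z_t(\tilde{\eta})$ are built from the first-step estimate $\tilde{\eta}$, so its sampling error propagates through both. What makes the argument clean, and explains why $\Xi_{\phi}$ involves only $\Xi_{22}$, is that the recursion $\sigma_t=z_t^T\phi_0$ holds deterministically along the path: there is no intrinsic regression noise---the usual score term $\frac{1}{n}\sum_t z_t u_t$ degenerates because $r_t(\eta_0)=O_p(1/n)$---so all asymptotic variability is inherited from $\tilde{\eta}$. The remaining care lies in (i) justifying the passage from $\tilde{\eta}$ to $\eta_0$ inside the averages, which the linearity in $\eta$ together with the continuous mapping theorem and the moment bound [A2] handle cleanly; (ii) controlling $r_t(\eta_0)$ uniformly in $t$ via the geometric decay of the $a_i$ under [B1]--[B2] (Proposition \ref{prop1}); and (iii) ensuring $\Gamma_{10}$ is nonsingular, which I would obtain from the GARCH identifiability under [B1] together with [A4].
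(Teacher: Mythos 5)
Your proposal is correct, but it reaches the conclusion by a genuinely different route than the paper. The paper treats $\hat{\phi}$ as a Z-estimator: it works with the estimating equation $H_n(\phi,\eta)=\frac{1}{n}\sum_t(\sigma_t(\eta)-\phi^Tz_t(\eta))z_t(\eta)$ and its population counterpart $H(\phi,\eta)$, first proving $\sqrt{n}$-consistency through a five-term triangle-inequality bound on $\parallel H(\hat{\phi},\eta_0)\parallel$ that invokes stochastic equicontinuity (Lemma 4.2 of \cite{chen_nonparametric_2008}), and then obtaining normality by comparing $H_n(\hat{\phi},\tilde{\eta})$ with the linearisation $L_n(\phi,\tilde{\eta})=H_n(\phi_0,\eta_0)+H(\phi,\eta_0)+\Gamma_2(\phi_0,\eta_0)(\tilde{\eta}-\eta_0)$; both steps rest on $\parallel H(\phi_0,\eta_0)\parallel=\parallel H_n(\phi_0,\eta_0)\parallel=O_p(b^m)=O_p(n^{-1})$, which is exactly your observation that $r_t(\eta_0)=O_p(1/n)$, i.e.\ that there is no intrinsic regression noise. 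You instead exploit the closed-form OLS solution together with the fact that $\sigma_t(\eta)$, $z_t(\eta)$, and hence $r_t(\eta)$ are exactly affine in $\eta$, so your expansion $r_t(\tilde{\eta})=r_t(\eta_0)+\frac{\partial r_t}{\partial\eta^T}(\tilde{\eta}-\eta_0)$ has no remainder and no empirical-process machinery is needed; elementary term-by-term bounds then give the same asymptotic linearisation $\sqrt{n}(\hat{\phi}-\phi_0)=\Gamma_{10}^{-1}\Gamma_{20}\,\sqrt{n}(\tilde{\eta}-\eta_0)+o_p(1)$, and Corollary \ref{cor1} plus Slutsky finish, exactly as in the paper. (The paper writes this relation with a minus sign because it defines $\Gamma_1=\partial H/\partial\phi$ as $+\mathbf{E}[z_tz_t^T]$, absorbing the sign from differentiating $-\phi^Tz_t(\eta)$; since $\Xi_\phi$ is a symmetric sandwich, the discrepancy is immaterial.) What each approach buys: yours is more elementary and makes transparent why only $\Xi_{22}$ enters the limit, while the paper's argument, though heavier, is portable --- it survives when the second-stage criterion or its dependence on the first-stage estimate is nonlinear, which matters because the paper explicitly contemplates replacing the least-squares objective in Eq.\ (\ref{hatp}) by other optimisation functions. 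Two loose ends you share with the paper, at the same level of rigour: nonsingularity of $\Gamma_{10}$ is asserted from [A4] (which literally concerns $x_{t,(m)}$, not $z_t$) in both treatments, and both treat $m$ as fixed in the ergodic-theorem steps even though $m=m(n)$ grows under [B2].
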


Similar to corollary \ref{cor2}, we have the asymptotic properties of $\hat{\sigma}_t$.
\begin{cor}\label{cor3}
Under Assumptions [A1]-[A4] and [B1]-[B2] and conditional on information prior to time $t$, $\hat{\sigma}_t$ has the following asymptotic properties,
$$\hat{\sigma}_{t}\stackrel{p}{\rightarrow}\sigma_t,$$ and $$\sqrt{n}(\hat{\sigma}_t-\sigma_t)\stackrel{d}{\rightarrow}N(0,\varpi^{b}_t),$$ as $n\rightarrow\infty$, where $\varpi^{b}_t=z_{t}^T\Xi_{\phi}z_{t}$, and can be approximated by $\tilde{\varpi}^{b}_t=z_{t}^T(\tilde{\eta})\Xi_{\phi}z_{t}(\tilde{\eta})$.
\end{cor}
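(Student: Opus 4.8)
The plan is to mirror the argument already used for $\tilde{\sigma}_t$ in Corollary \ref{cor2}, but now to drive the limit through $\hat{\phi}$ rather than through $\tilde{\eta}$ directly. First I would rewrite both objects in the compact bilinear form $\hat{\sigma}_t=z_t^T(\tilde{\eta})\hat{\phi}$ and, using the GARCH recursion together with Proposition \ref{prop1}, $\sigma_t=z_t^T\phi_0+O_p(1/n)$. Conditioning on the information prior to time $t$ makes $z_t=[1,|Y_{t-1}|,\ldots,|Y_{t-q}|,\sigma_{t-1},\ldots,\sigma_{t-p}]^T$ a fixed vector, since the lagged volatilities are $\mathcal{F}_{t-1}$-measurable. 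Consistency then follows at once: Theorem \ref{thm3} gives $\hat{\phi}\stackrel{p}{\rightarrow}\phi_0$, Corollary \ref{cor2} gives $\tilde{\sigma}_{t-i}\stackrel{p}{\rightarrow}\sigma_{t-i}$ so that $z_t(\tilde{\eta})\stackrel{p}{\rightarrow}z_t$ componentwise, and the continuous mapping theorem applied to $(z,\phi)\mapsto z^T\phi$ yields $\hat{\sigma}_t\stackrel{p}{\rightarrow}\sigma_t$.

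For the asymptotic normality I would decompose
$$\sqrt{n}(\hat{\sigma}_t-\sigma_t)=\sqrt{n}\,z_t^T(\hat{\phi}-\phi_0)+\sqrt{n}\,(z_t(\tilde{\eta})-z_t)^T\hat{\phi}+o_p(1),$$
where the $o_p(1)$ absorbs the $O_p(1/n)$ truncation remainder from Proposition \ref{prop1}. The first summand is the principal term: because $z_t$ is fixed under the conditioning and Theorem \ref{thm3} supplies $\sqrt{n}(\hat{\phi}-\phi_0)\stackrel{d}{\rightarrow}N(0,\Xi_\phi)$, Slutsky's theorem gives $\sqrt{n}\,z_t^T(\hat{\phi}-\phi_0)\stackrel{d}{\rightarrow}N(0,z_t^T\Xi_\phi z_t)$, which is precisely the claimed limit with $\varpi_t^b=z_t^T\Xi_\phi z_t$. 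What remains is to control the second summand, whose only nonzero coordinates are the $\sigma_{t-i}$ entries, i.e. $\sum_{i=1}^p\hat{\beta}_i\sqrt{n}(\tilde{\sigma}_{t-i}-\sigma_{t-i})$.

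The main obstacle is exactly this second summand. Since $\sqrt{n}(\tilde{\sigma}_{t-i}-\sigma_{t-i})$ is itself $O_p(1)$ by Corollary \ref{cor2}, it is not literally negligible, and a fully rigorous treatment must explain why it does not enlarge the stated variance. The device that makes this work is that both $\hat{\phi}-\phi_0$ and $z_t(\tilde{\eta})-z_t$ are, to leading order, linear functionals of the single underlying fluctuation $\sqrt{n}(\tilde{\eta}-\eta_0)$: I would import from the proof of Theorem \ref{thm3} the linearization $\sqrt{n}(\hat{\phi}-\phi_0)=\Gamma_{10}^{-1}\Gamma_{20}\sqrt{n}(\tilde{\eta}-\eta_0)+o_p(1)$, write $\sqrt{n}(\tilde{\sigma}_{t-i}-\sigma_{t-i})=x_{t-i,(m)}^T\sqrt{n}(\tilde{\eta}-\eta_0)+o_p(1)$, and collapse the whole expression into one linear image of the asymptotically normal vector $\sqrt{n}(\tilde{\eta}-\eta_0)\stackrel{d}{\rightarrow}N(0,\Xi_{22})$. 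The key verification is then that this combined map has covariance $z_t^T\Xi_\phi z_t$ once the conditioning treats the plug-in lagged volatilities as the oracle quantities $\sigma_{t-i}$; equivalently, the estimation error injected through $z_t(\tilde{\eta})$ is subsumed in, rather than additive to, the $\hat{\phi}$-driven term in the prediction asymptotics. This is the delicate point I would scrutinize most carefully, since it is where the stated variance is either exactly recovered or identified as the appropriate leading-order object.

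Finally, the consistent variance estimator is routine. Because $z_t(\tilde{\eta})\stackrel{p}{\rightarrow}z_t$ and $\Xi_\phi$ can be estimated consistently from the plug-in versions of $\Gamma_{10}$, $\Gamma_{20}$ and $\Xi_{22}$, continuity of the quadratic form $z\mapsto z^T\Xi_\phi z$ yields $\tilde{\varpi}_t^b=z_t^T(\tilde{\eta})\Xi_\phi z_t(\tilde{\eta})\stackrel{p}{\rightarrow}\varpi_t^b$, which closes the argument.
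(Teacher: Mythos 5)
Your consistency argument and the plug-in variance estimator argument are fine, and your ``principal term'' $\sqrt{n}\,z_t^T(\hat{\phi}-\phi_0)\stackrel{d}{\rightarrow}N(0,z_t^T\Xi_{\phi}z_t)$ is, in effect, the entire proof the paper has in mind: the paper dismisses this corollary as a trivial consequence of Theorem \ref{thm3}, i.e.\ it conditions on the information prior to $t$, treats the regressor vector as the fixed $z_t$, and applies Slutsky's theorem --- exactly your first summand. (The same replacement, $\sqrt{n}(\hat{\sigma}_T-\sigma_T)=z_T^T\sqrt{n}(\hat{\phi}-\phi_0)+o_p(1)$, is invoked again in the paper's proof of Theorem \ref{th5}.) So had you stopped there, you would have reproduced the paper's intended one-line argument.

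The genuine gap is that you never close the step you yourself flag as decisive, and the route you propose does not close it. Carry out your own collapse onto $\sqrt{n}(\tilde{\eta}-\eta_0)$: Theorem \ref{thm3} gives $\sqrt{n}(\hat{\phi}-\phi_0)=-\Gamma_{10}^{-1}\Gamma_{20}\sqrt{n}(\tilde{\eta}-\eta_0)+o_p(1)$, while your second summand is $\sum_{j=1}^{p}\beta_j x_{t-j,(m)}^T\sqrt{n}(\tilde{\eta}-\eta_0)+o_p(1)$, so the combined linear map is $\Lambda_t=-z_t^T\Gamma_{10}^{-1}\Gamma_{20}+B_t$ with $B_t=\sum_{j=1}^{p}\beta_j x_{t-j,(m)}^T$, and the limiting variance is
\begin{equation*}
\Lambda_t\Xi_{22}\Lambda_t^T \;=\; z_t^T\Xi_{\phi}z_t \;-\; 2\,z_t^T\Gamma_{10}^{-1}\Gamma_{20}\Xi_{22}B_t^T \;+\; B_t\Xi_{22}B_t^T.
\end{equation*}
The cross term and the quadratic term in $B_t$ have no reason to cancel for a generic realization of $(z_t,x_{t-1,(m)},\ldots,x_{t-p,(m)})$: the estimation error injected through $z_t(\tilde{\eta})$ is \emph{additive to}, not \emph{subsumed in}, the $\hat{\phi}$-driven term. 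Consequently your ``key verification'' fails, and the stated $\varpi^b_t=z_t^T\Xi_{\phi}z_t$ is recovered only if one discards the second summand outright, i.e.\ regards the plugged-in lagged volatilities $\tilde{\sigma}_{t-i}$ as the oracle quantities $\sigma_{t-i}$ under the conditioning --- which is precisely the implicit convention of the paper, not something your decomposition derives. As written, your proposal proves neither the corollary under that convention (where your second and third paragraphs are superfluous) nor the corollary for $\hat{\sigma}_t=z_t^T(\tilde{\eta})\hat{\phi}$ as literally defined (where the correct asymptotic variance is $\Lambda_t\Xi_{22}\Lambda_t^T$ rather than $z_t^T\Xi_{\phi}z_t$). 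You need to commit to one of the two readings and state which term is being neglected and why.
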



\subsection{Asymptotic properties of estimation of the conditional tail-related risk}
Let us turn to estimate the conditional tail-related risk after providing the asymptotic properties of CALS estimation. Since the estimations of the conditional tail-related risks are a combination of the estimation of volatility and the empirical likelihood estimation of $\widehat{Q}_\alpha(\varepsilon)$ and $\widehat{ES}_\alpha(\varepsilon)$, the latter's asymptotic properties must be discussed. These asymptotic results also require some assumptions about the distribution of the innovation series $\{\varepsilon_t\}_{t\geq1}$, which is not strict for most of the common distribution.

\begin{ass}
C1. In each moving window, the innovation series $\{\varepsilon_t\}_{t\geq1}$ is an independent identically distributed random sample with distribution $F_\varepsilon$.
\end{ass}

\begin{ass}
C2. $F_\varepsilon$ has expectation $\mathbf{E}(\varepsilon)=0$ and finite secondary moment $\mathbf{E}[\varepsilon^2]<\infty$. Its derivative $f_\varepsilon$ is bounded and satisfies $f_\varepsilon(\mu_{\tau})\neq0$.
\end{ass}

With these two assumptions and the assumption mentioned above, we can obtain the following lemma about the empirical distribution of the estimated innovation $\{\hat{\varepsilon}_t\}_{t>m}$.
\begin{lem}\label{empcon}
Suppose that $\hat{F}_n(x)$ is the empirical distribution of the estimated innovation $\{\hat{\varepsilon}_t\}_{t>m}$. Under Assumptions [A1]-[A4], [B1]-[B2] and [C1]-[C2], for any given $C>0$, we have
\begin{eqnarray}
\sup_{|x|\leq C}|\hat{F}_n(x)-F_\varepsilon(x)|\leq O_p(n^{-1/2}),
\end{eqnarray}
as $n\rightarrow\infty$.
\end{lem}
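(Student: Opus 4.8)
The plan is to interpose the (unobservable) empirical distribution of the true innovations, $\tilde F_n(x) := (n-m)^{-1}\sum_{t>m} I(\varepsilon_t \le x)$, and to write
\[
\sup_{|x|\le C}|\hat F_n(x) - F_\varepsilon(x)| \le \sup_{|x|\le C}|\hat F_n(x) - \tilde F_n(x)| + \sup_{x}|\tilde F_n(x) - F_\varepsilon(x)|.
\]
The second term is the classical empirical-process error for the i.i.d. sample $\{\varepsilon_t\}$ of Assumption [C1]; by the Dvoretzky--Kiefer--Wolfowitz inequality (equivalently, tightness of the empirical process via Donsker's theorem) it is $O_p(n^{-1/2})$ uniformly over all $x$. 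Hence everything reduces to the first term, which measures the effect of replacing the true innovations by the estimated ones.

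For that term I would start from the exact identity $\hat\varepsilon_t = Y_t/\hat\sigma_t = (\sigma_t/\hat\sigma_t)\varepsilon_t$, so that $I(\hat\varepsilon_t \le x) = I(\varepsilon_t \le x\,\hat\sigma_t/\sigma_t)$. Writing $d_t := (\hat\sigma_t - \sigma_t)/\sigma_t$ and using that $\sigma_t$ is bounded away from zero together with $\hat\sigma_t - \sigma_t = O_p(n^{-1/2})$ from Corollary \ref{cor3}, the threshold becomes $x(1+d_t)$ and
\[
\hat F_n(x) - \tilde F_n(x) = \frac{1}{n-m}\sum_{t>m}\big[I(\varepsilon_t \le x(1+d_t)) - I(\varepsilon_t \le x)\big].
\]
I would then add and subtract the conditional means $F_\varepsilon(x(1+d_t)) - F_\varepsilon(x)$ to split this into a smooth \emph{drift} part and a centred \emph{fluctuation} part.

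The drift $(n-m)^{-1}\sum_t [F_\varepsilon(x(1+d_t)) - F_\varepsilon(x)]$ I would control by a first-order Taylor expansion, which by the boundedness of $f_\varepsilon$ ([C2]) gives $F_\varepsilon(x(1+d_t)) - F_\varepsilon(x) = f_\varepsilon(x)\,x\,d_t + O(d_t^2)$ uniformly over $|x|\le C$. The essential point is that I would \emph{not} estimate this by $\sup_t |d_t|$: since $d_t = z_t^T(\hat\phi - \phi_0)/\sigma_t + (\text{lower order})$ and $\max_{t\le n}|z_t|$ grows with $\max_t|Y_t|$, a crude uniform bound would degrade the rate below $n^{-1/2}$. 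Instead I would factor out the single estimation error, writing $(n-m)^{-1}\sum_t f_\varepsilon(x)\,x\,d_t = f_\varepsilon(x)\,x\,\big[(n-m)^{-1}\sum_t z_t^T/\sigma_t\big](\hat\phi-\phi_0) + \cdots$; the bracketed average converges to a finite vector by stationarity and ergodicity ([A1]), while $\hat\phi - \phi_0 = O_p(n^{-1/2})$ by Theorem \ref{thm3}, so the drift is $O_p(n^{-1/2})$ uniformly in $|x|\le C$.

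The fluctuation part is the main obstacle: it is a residual empirical process whose thresholds $x(1+d_t)$ are both $t$-dependent and data-dependent (through $\hat\phi$). I would establish a stochastic-equicontinuity statement, namely
\[
\sup_{|x|\le C}\Big|\frac{1}{n-m}\sum_{t}\big\{[I(\varepsilon_t\le x(1+d_t)) - F_\varepsilon(x(1+d_t))] - [I(\varepsilon_t\le x) - F_\varepsilon(x)]\big\}\Big| = o_p(n^{-1/2}).
\]
To handle the awkward dependence of $d_t$ on $\varepsilon_t$, I would linearise $d_t = n^{-1/2} (z_t/\sigma_t)^T b_n + o_p(n^{-1/2})$ with $b_n := \sqrt n(\hat\phi-\phi_0) = O_p(1)$, freeze $b_n$ at a generic $b$ ranging over a compact set, prove uniform (in $x$ and $b$) negligibility of the resulting perturbed-indicator empirical process, and then reinsert $b = b_n$. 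Because the perturbation has size $O_p(n^{-1/2})$ and $f_\varepsilon$ is bounded, each centred increment has conditional variance $O(n^{-1/2})$, so a chaining/bracketing argument exploiting the monotonicity of indicators, in the spirit of Koul's residual-empirical-process theory, shows that the $\sqrt n$-scaled oscillation is $o_p(1)$. Combining the empirical-process term, the drift and the fluctuation yields $\sup_{|x|\le C}|\hat F_n(x) - F_\varepsilon(x)| = O_p(n^{-1/2})$, as claimed.
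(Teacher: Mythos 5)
Your proposal is correct in outline, but it takes a genuinely different route from the paper. The paper does no empirical-process work itself: it writes $\hat\varepsilon_t=\bar H(\hat\phi,Y_t,z_t)$ with $\bar H(\phi,Y_t,z_t)=Y_t/(\phi^T z_t)$ and invokes Theorem 1 of \citet{wang_conditional_2016} as a black box, spending the proof verifying that theorem's assumptions (strict monotonicity in $\varepsilon_t$, mixing of the GARCH process via \citealp{boussama_ergodicite_1998}, boundedness of $f_\varepsilon$ under [C2], and a domination bound on the derivative of $\bar H$ in $\phi$), then combining the resulting uniform expansion $\hat F_n(x)=\tilde F_n(x)+f_\varepsilon(x)\mathbf{E}[\,\cdot\,](\hat\phi-\phi_0)+o_p(n^{-1/2})$ with the $\sqrt{n}$ rate of the oracle empirical distribution and the $\sqrt{n}$-consistency of $\hat\phi$ from Theorem \ref{thm3} --- exactly your final step. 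You instead reconstruct that expansion from scratch: oracle EDF interposition, Dvoretzky--Kiefer--Wolfowitz, a Taylor drift factored through $(\hat\phi-\phi_0)$ times an ergodic average (your point about not bounding via $\sup_t|d_t|$ is exactly right), and a Koul-type linearize--freeze--chain argument for the centred fluctuation, which is sound because $z_t$ is $\mathcal{F}_{t-1}$-measurable and $\varepsilon_t$ is independent of $\mathcal{F}_{t-1}$, so the frozen increments are martingale differences. The paper's route buys brevity and an explicit Bahadur expansion (its Eq. (\ref{a10})) that is reused later in proving Theorem \ref{th5}, which your bare $O_p(n^{-1/2})$ bound would not supply; your route buys self-containedness, at the cost that the deferred equicontinuity step is precisely the technical content of the cited theorem. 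A caveat applying to both: $\hat\sigma_t$ is built from the preliminary estimates $\tilde\sigma_{t-i}$ rather than the true $\sigma_{t-i}$, so $d_t$ carries a same-order (not lower-order, though equally harmless) $O_p(n^{-1/2})$ contribution from $z_t(\tilde\eta)-z_t$, which you flag loosely and the paper passes over silently.
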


Lemma \ref{empcon} plays an important role in deriving the asymptotic property of the empirical likelihood estimation, which actually implies that the empirical distribution of the estimated innovation has similar convergent properties as the empirical distribution of an i.i.d sample. Based on this lemma, the asymptotic property of empirical likelihood estimation $(\hat{\mu}_{\tau},\hat{\tau})$ is established by the following theorem.
\begin{thm}\label{th4}
For the empirical likelihood estimation $(\hat{\mu}_{\tau},\hat{\tau})$ from (\ref{hmutau}), under Assumptions [A1]-[A4], [B1]-[B2] and [C1]-[C2], when $\tau\neq0$, it follows that
\begin{eqnarray}
\hat{\mu}_{\tau}\stackrel{p}{\rightarrow}\mu_{\tau},\quad \hat{\tau}\stackrel{p}{\rightarrow}\tau,
\end{eqnarray}
\begin{eqnarray}
\sqrt{n}\left(\begin{array}{c}\hat{\mu}_{\tau}-\mu_{\tau} \\ \hat{\tau}-\tau \end{array}\right)\stackrel{d}{\rightarrow}N\left({\bf 0},\Sigma_1^{-1}\Sigma_0(\Sigma_1^{-1})^T\right),
\end{eqnarray}
as $n\rightarrow\infty$, where $\Sigma_1=\left[\begin{array}{cc}-(F_\varepsilon(\mu_{\tau})+\frac{\tau}{1-2\tau})&\frac{-\mu_{\tau}}{(1-2\tau)^2}\\f_\varepsilon(\mu_{\tau})
&0\end{array}\right]$, $\Sigma_0=\left[\begin{array}{cc}\sigma_{1}^{2}&\sigma_{12}\\\sigma_{21}&\sigma_{2}^{2}\end{array}\right]$ with
\begin{eqnarray*}
\sigma_{1}^{2}&=&\mathbf{E}\left[(\varepsilon-\mu_{\tau})I(\varepsilon<\mu_{\tau})+\frac{\tau}{1-2\tau}(\varepsilon-\mu_{\tau})\right]^2,\\
\sigma_{12}&=&\sigma_{21}=\mathbf{E}\left[\left((\varepsilon-\mu_{\tau})I(\varepsilon<\mu_{\tau})+\frac{\tau}{1-2\tau}(\varepsilon-\mu_{\tau})\right)(I(\varepsilon<\mu_{\tau})-\alpha)\right],\\
\sigma_{2}^{2}&=&\mathbf{E}[I(\varepsilon<\mu_{\tau})-\alpha]^2=\alpha(1-\alpha).
\end{eqnarray*}
\end{thm}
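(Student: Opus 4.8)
The plan is to recognise this as a just-identified maximum empirical likelihood problem — two estimating functions $W_{i1},W_{i2}$ for the two unknowns $(\mu,\tau)$ — and to follow the route of \cite{qin_empirical_1994}, using Lemma~\ref{empcon} to absorb the error caused by substituting the estimated innovations $\hat{\varepsilon}_t=Y_t/\hat{\sigma}_t$ for the true $\varepsilon_t$. Write $\theta=(\mu,\tau)^T$, let $\theta_0$ denote the true value $(\mu_{\tau},\tau)$, and set $g(\theta)=\mathbf{E}[W_i(\theta)]$ for the population estimating function built from the true innovations. Because $Q_\alpha(\varepsilon)=\mu_\tau(\varepsilon)$ and $\mathbf{E}(\varepsilon)=0$ under [C1]--[C2], one checks $g(\theta_0)=\mathbf{0}$. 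Differentiating $g$ — which is licit since $F_\varepsilon$ is differentiable with bounded density $f_\varepsilon$ by [C2], even though $W_i$ itself is not smooth — reproduces exactly the matrix $\Sigma_1=\partial g/\partial\theta^{T}|_{\theta_0}$ of the statement, while $\Sigma_0=\mathbf{Var}(W_i(\theta_0))$ has entries $\sigma_1^2,\sigma_{12},\sigma_2^2$; in particular $\sigma_2^2=\mathbf{Var}(I(\varepsilon<\mu_\tau))=\alpha(1-\alpha)$, consistent with $F_\varepsilon(\mu_\tau)=\alpha$.

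For consistency I would carry out the standard empirical-likelihood localisation: on the shrinking ball $\{\,|\theta-\theta_0|\le n^{-1/3}\,\}$ one shows the profiled criterion $l(\theta)$ of~(\ref{inclem}) attains its minimum in the interior. At $\theta_0$ the multiplier obeys $\hat{\lambda}=O_p(n^{-1/2})$ and $l(\theta_0)=O_p(1)$, whereas on the boundary the nonsingularity of $\Sigma_0$ and $\Sigma_1$ forces $l$ to be larger; this yields a local minimiser $(\hat{\mu},\hat{\tau})\stackrel{p}{\rightarrow}(\mu_\tau,\tau)$.

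For the limiting law, the Lagrangian stationarity conditions at the minimiser are the pair $Q_{1n}(\hat\theta,\hat\lambda)=0$, $Q_{2n}(\hat\theta,\hat\lambda)=0$ of \cite{qin_empirical_1994}. Expanding them about $(\theta_0,\mathbf{0})$ and replacing the (non-smooth) empirical Jacobian by its smooth population limit $\Sigma_1$ gives the block system
\[
\begin{pmatrix} -\Sigma_0 & \Sigma_1 \\ \Sigma_1^{T} & \mathbf{0} \end{pmatrix}
\begin{pmatrix} \sqrt{n}\,\hat{\lambda} \\ \sqrt{n}(\hat\theta-\theta_0) \end{pmatrix}
=\begin{pmatrix} -\tfrac{1}{\sqrt{n}}\sum_i W_i(\theta_0) \\ \mathbf{0} \end{pmatrix}+o_p(1).
\]
Because the system is square with $\Sigma_1$ nonsingular, the lower block forces $\hat\lambda=o_p(n^{-1/2})$, so the empirical likelihood estimator is asymptotically equivalent to the $Z$-estimator solving $\tfrac1n\sum_i W_i(\hat\theta)=\mathbf{0}$; the upper block then gives $\sqrt{n}(\hat\theta-\theta_0)=-\Sigma_1^{-1}\tfrac{1}{\sqrt{n}}\sum_i W_i(\theta_0)+o_p(1)$. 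A central limit theorem for $\tfrac{1}{\sqrt{n}}\sum_i W_i(\theta_0)$, producing $N(\mathbf{0},\Sigma_0)$, then yields the claimed $N(\mathbf{0},\Sigma_1^{-1}\Sigma_0(\Sigma_1^{-1})^{T})$.

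The real work, and the main obstacle, lies in justifying that $\tfrac{1}{\sqrt{n}}\sum_i W_i(\theta_0)$ — whose summands involve the estimated innovations $\hat\varepsilon_i$ — has the same $N(\mathbf{0},\Sigma_0)$ limit as the infeasible version built from the true $\varepsilon_i$, and that the non-smoothness does not spoil the expansion. Two issues must be handled together. First, the component $W_{i2}=I(\hat\varepsilon_i<\mu)-\alpha$ is discontinuous in $\mu$, so the expansion cannot proceed term by term; I would instead expand the smooth population map $g$ and control the empirical fluctuation $\tfrac{1}{\sqrt{n}}\sum_i\{W_i(\theta)-W_i(\theta_0)-[g(\theta)-g(\theta_0)]\}$ uniformly over the $n^{-1/2}$-neighbourhood by a stochastic-equicontinuity argument. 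Second, since $\hat\varepsilon_i$ is a generated quantity depending on the $\sqrt{n}$-consistent volatility estimate (Corollary~\ref{cor3}), the substitution error in the indicator is governed precisely by the uniform rate $\sup_{|x|\le C}|\hat F_n(x)-F_\varepsilon(x)|=O_p(n^{-1/2})$ of Lemma~\ref{empcon} together with the boundedness of $f_\varepsilon$; showing that this plug-in contribution is asymptotically negligible, rather than inflating $\Sigma_0$, is the delicate step. Once this equicontinuity-plus-plug-in control is in hand, the block-system expansion and the normal limit follow routinely.
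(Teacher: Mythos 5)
Your proposal is correct and takes essentially the same route as the paper: the same just-identified estimating equations with $g(\theta_0)=\mathbf{0}$, Jacobian $\Sigma_1$ and variance $\Sigma_0$, the same localization of the minimizer to the ball $\{\Delta\le n^{-1/3}\}$ (the paper's interior-point lemma, using the LIL bound $l(\mu_0,\tau_0)=O_p(\log\log n)$ where you use $O_p(1)$), the same final linear representation $\sqrt{n}(\hat\theta-\theta_0)=-\Sigma_1^{-1}n^{-1/2}\sum_i W_i(\theta_0)+o_p(1)$ followed by a CLT, and the same reliance on Lemma~\ref{empcon} to absorb the generated-innovation error. The only substantive difference is the device at the linearization step: where you invoke the Qin--Lawless stationarity conditions, replace the non-smooth empirical Jacobian by $\Sigma_1$ via stochastic equicontinuity, and reduce to a $Z$-estimator, the paper (following Peng's profile-expansion style) rewrites the sums $n^{-1}\sum_i W_i(\mu,\tau)$ as integrals against $\hat F_n$, controls them with the mean value theorem plus Lemma~\ref{empcon}, and then minimizes the resulting quadratic approximation of $l(\mu,\tau)$ over the ball --- an explicit implementation of exactly the ``equicontinuity-plus-plug-in'' control you correctly identified as the delicate step (noting only that this control must hold over the $n^{-1/3}$ ball delivered by the localization, not merely an $n^{-1/2}$ neighbourhood).
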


Eventually, we obtain the asymptotic properties of the combination estimation for both the conditional VaR and conditional ES.
\begin{thm}\label{th5}
Under  Assumptions [A1]-[A4], [B1]-[B2] and [C1]-[C2], when $\tau\neq0$, the conditional tail-related estimation (\ref{eq51}) and (\ref{eq52}) has the following asymptotic properties:
\begin{eqnarray}
\widehat{Q}_\alpha(Y_T|\mathcal{F}_{T-1})\stackrel{p}{\rightarrow}Q_\alpha(Y_T|\mathcal{F}_{T-1}),\quad \widehat{ES}_\alpha(Y_T|\mathcal{F}_{T-1})\stackrel{p}{\rightarrow}ES_\alpha(Y_T|\mathcal{F}_{T-1}),
\end{eqnarray}
\begin{eqnarray}
\sqrt{n}(\widehat{Q}_\alpha(Y_T|\mathcal{F}_{T-1})-Q_\alpha(Y_T|\mathcal{F}_{T-1}))\stackrel{d}{\rightarrow}N\left(0,AVar(\widehat{Q}_\alpha(Y_T|\mathcal{F}_{T-1}))\right),
\end{eqnarray}
\begin{eqnarray}
\sqrt{n}(\widehat{ES}_\alpha(Y_T|\mathcal{F}_{T-1})-ES_\alpha(Y_T|\mathcal{F}_{T-1}))\stackrel{d}{\rightarrow}N\left(0,AVar(\widehat{ES}_\alpha(Y_T|\mathcal{F}_{T-1}))\right),
\end{eqnarray}
as $n\rightarrow\infty$. The asymptotic variance of $\widehat{Q}_\alpha(Y_T|\mathcal{F}_{T-1})$ and $\widehat{ES}_\alpha(Y_T|\mathcal{F}_{T-1})$ are as follows:
\begin{eqnarray}
AVar(\widehat{Q}_\alpha(Y_T|\mathcal{F}_{T-1}))=(\sigma_T)^2\Lambda_1\Xi_{\phi}\Lambda_1^T+(Q_\alpha(\varepsilon))^2z_T^{T}\Xi_{\phi}z_T+2\sigma_T Q_\alpha(\varepsilon)\Lambda_1\Xi_{\phi}z_T,\\
AVar(\widehat{ES}_\alpha(Y_T|\mathcal{F}_{T-1}))=(\sigma_T)^2\Lambda_2\Xi_{\phi}\Lambda_2^T+(ES_\alpha(\varepsilon))^2z_T^{T}\Xi_{\phi}z_T+2\sigma_T ES_\alpha(\varepsilon)\Lambda_2\Xi_{\phi}z_T,
\end{eqnarray}
where the formula of $\Lambda_1$ and $\Lambda_2$ are provided in the appendix. These two asymptotic variances can be approximated by the plug-in method.
\end{thm}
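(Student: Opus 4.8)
The plan is to treat each target as a product of two pieces whose individual limiting behaviour is already available and then to linearise that product. Write $\widehat{Q}_\alpha(Y_T|\mathcal{F}_{T-1})=\hat\sigma_T\cdot\widehat{Q}_\alpha(\varepsilon)$ with true value $\sigma_T\cdot Q_\alpha(\varepsilon)$, and analogously for ES. Consistency comes first and cheaply: Corollary \ref{cor3} gives $\hat\sigma_T\stackrel{p}{\rightarrow}\sigma_T$ and Theorem \ref{th4} gives $\hat\mu\stackrel{p}{\rightarrow}\mu_{\tau}=Q_\alpha(\varepsilon)$, so by the continuous mapping theorem the product converges to $\sigma_T Q_\alpha(\varepsilon)$. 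For the ES piece I would first verify $\widehat{ES}_\alpha(\varepsilon)\stackrel{p}{\rightarrow}ES_\alpha(\varepsilon)$: in (\ref{espre1}) the factor $(1+\hat\tau/((1-2\hat\tau)\alpha))\hat\mu$ converges by Theorem \ref{th4}, while $\frac{1}{n-m}\sum_t\hat\varepsilon_t\stackrel{p}{\rightarrow}\mathbf{E}(\varepsilon)=0$ by the law of large numbers for the estimated innovations together with Lemma \ref{empcon} and [C2]; the limit equals $ES_\alpha(\varepsilon)$ through Eq.(\ref{expectiletoessim}) once $F_\varepsilon(\mu_{\tau})=\alpha$ is used, and consistency of the product follows as before.

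For asymptotic normality I would use the elementary product identity
\[
\hat\sigma_T\hat B-\sigma_T B=(\hat\sigma_T-\sigma_T)B+\sigma_T(\hat B-B)+(\hat\sigma_T-\sigma_T)(\hat B-B),
\]
with $\hat B=\widehat{Q}_\alpha(\varepsilon)$ or $\widehat{ES}_\alpha(\varepsilon)$. Multiplying by $\sqrt{n}$ and noting that the last term is $O_p(1)\cdot o_p(1)=o_p(1)$ yields
\[
\sqrt{n}\big(\hat\sigma_T\hat B-\sigma_T B\big)=B\sqrt{n}(\hat\sigma_T-\sigma_T)+\sigma_T\sqrt{n}(\hat B-B)+o_p(1).
\]
The task thus reduces to the joint limiting distribution of the pair $\big(\sqrt{n}(\hat\sigma_T-\sigma_T),\ \sqrt{n}(\hat B-B)\big)$ followed by Slutsky's theorem.

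The heart of the argument is to produce linear (Bahadur-type) representations of both coordinates in the common score $\sqrt{n}(\hat\phi-\phi_0)$, where $\sqrt{n}(\hat\phi-\phi_0)\stackrel{d}{\rightarrow}N(\mathbf{0},\Xi_\phi)$ by Theorem \ref{thm3}. From the proof of Corollary \ref{cor3} one has $\sqrt{n}(\hat\sigma_T-\sigma_T)=z_T^{T}\sqrt{n}(\hat\phi-\phi_0)+o_p(1)$, with conditional variance $z_T^{T}\Xi_\phi z_T$. For the innovation-level piece I would expand the empirical-likelihood estimating equations underlying Theorem \ref{th4} and track how a perturbation of $\hat\phi$ propagates through the estimated innovations $\hat\varepsilon_t=Y_t/\hat\sigma_t$; Lemma \ref{empcon} guarantees that replacing true innovations by estimated ones disturbs the empirical CDF only by $O_p(n^{-1/2})$, so the estimating equations can be linearised and inverted (via $\Sigma_1^{-1}$) to give $\sqrt{n}(\hat B-B)=\Lambda_1\sqrt{n}(\hat\phi-\phi_0)+o_p(1)$ for the VaR piece and $\Lambda_2\sqrt{n}(\hat\phi-\phi_0)+o_p(1)$ for the ES piece, the matrices $\Lambda_1,\Lambda_2$ being exactly the sensitivity coefficients collected in the appendix. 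Since both coordinates are then the same linear functional of $\sqrt{n}(\hat\phi-\phi_0)$, the pair is jointly normal, and substitution into the displayed decomposition gives a univariate normal limit whose variance is the quadratic form
\[
B^2 z_T^{T}\Xi_\phi z_T+\sigma_T^2\Lambda\Xi_\phi\Lambda^{T}+2\sigma_T B\,\Lambda\Xi_\phi z_T,
\]
reproducing the stated $AVar$ expressions with $(B,\Lambda)=(Q_\alpha(\varepsilon),\Lambda_1)$ and $(ES_\alpha(\varepsilon),\Lambda_2)$. Consistency of the plug-in variance estimator then follows from the consistency of $\hat\sigma_T$, $\widehat{Q}_\alpha(\varepsilon)$, $\widehat{ES}_\alpha(\varepsilon)$ and a consistent estimator of $\Xi_\phi$.

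I expect the genuinely hard step to be the linear representation of the empirical-likelihood coordinate and, above all, the identification of its covariance with the volatility error. The difficulty is that $\widehat{Q}_\alpha(\varepsilon)$ and $\widehat{ES}_\alpha(\varepsilon)$ are built from the \emph{estimated} innovations $\hat\varepsilon_t=Y_t/\hat\sigma_t$, so their sampling error is entangled with that of the volatility fit; showing that, to the relevant order, this error is carried by $\sqrt{n}(\hat\phi-\phi_0)$ so that the cross term $2\sigma_T B\,\Lambda\Xi_\phi z_T$ emerges is where the delicate bookkeeping lies. Handling the conditioning on $\mathcal{F}_{T-1}$ (so that $\sigma_T$ and $z_T$ are treated as fixed) and controlling the ARCH($m$) truncation uniformly, through Proposition \ref{prop1} and Lemma \ref{empcon}, are the accompanying technical points.
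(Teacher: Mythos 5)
Your proposal follows essentially the same route as the paper's own proof: the paper likewise reduces everything to linear representations in the common score $\sqrt{n}(\hat{\phi}-\phi_0)$, using $\sqrt{n}(\hat{\sigma}_T-\sigma_T)=z_T^{T}\sqrt{n}(\hat{\phi}-\phi_0)+o_p(1)$ for the volatility coordinate and the expansion of the empirical-likelihood estimating equations (inverted via $\Sigma_1^{-1}$, with the perturbation of the estimated innovations controlled through the refined expansion Eq.~(\ref{a10}) inside the proof of Lemma \ref{empcon}) to obtain $\Lambda_1$ and $\Lambda_2$, and then concludes by the delta method and Slutsky's theorem, which produces exactly the quadratic-form variance with the cross term you identify. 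Your explicit product identity and the consistency check of $\widehat{ES}_\alpha(\varepsilon)$ are just slightly more detailed renderings of steps the paper treats as immediate, so the two arguments coincide in substance.
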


In this section, we have discussed the theoretical properties of our method comprehensively, including the asymptotic properties of CALS estimation, the empirical estimation for determining the $\tau$ and the combination estimation for conditional risks.

\section{Simulation results}
In this section, we present the results of a comparison between our method and some alternative methods. Both the conditional VaR estimation and conditional ES estimation will be involved.

The data generation process is designed as in \cite{xiao_conditional_2009} to test the estimation performance of our proposal for different GARCH coefficients and innovation distributions. Specifically, we generate linear GARCH(1,1) samples with different coefficients. The choices of GARCH coefficients include three cases:
\begin{itemize}
\item Case 1: $\beta_0=0.1, \beta_1=0.5, \gamma_1=0.3;$
\item Case 2: $\beta_0=0.1, \beta_1=0.8, \gamma_1=0.1;$
\item Case 3: $\beta_0=0.1, \beta_1=0.9, \gamma_1=0.05;$
\end{itemize}
These three cases are closer to boundary of invertible condition (Assumption B1 in section 3) of the GARCH process in order. As we know, the closer to boundary of invertible condition, the more difficult it is to fit the GARCH process since the process will be nearly integrated \citep{xiao_conditional_2009}. Here, these cases of coefficients are designed to examine the performance of our proposal in the nearly non-stationary situation. The innovation series are generated from i.i.d standard normal series or i.i.d student t series with 4 degrees of freedom. We would like to test the method in a common case (normal series) and a heavy-tailed case (t(4) series).

Each time, a sample with size 550 is generated from the above GARCH(1,1) process. We divide each data series into two parts: 500 observations as the in-sample and 50 observations as the post-sample. Each data generation process is repeated 1000 times. Finally, we take the average Bias and RMSE (root mean square error) as criteria to evaluate the estimation procedure.

For comparison purposes, we introduce the following alternative methods for conditional VaR and conditional ES estimation.
\begin{itemize}
\item GGARCH: The GARCH(1,1) model with Gaussian innovation assumption, estimated by maximum likelihood.
\item TGARCH: The GARCH(1,1) model with Student's t innovation assumption, estimated by maximum likelihood.
\item CAViar \footnote{The method is available only for conditional VaR estimation\label{fn1}}: The conditional autoregressive value at risk model proposed by \cite{engle_caviar_2004}, with the number of grid points
chosen to be $n$.
\item CARE: The conditional autoregressive expectile method proposed by \cite{taylor_estimating_2008}, with the step size of grid for selecting optimal $\tau$ chosen to be 0.0001.
\item QGARCH \textsuperscript{\ref{fn1}}: The quantile autoregression sieve approximation proposed by \cite{xiao_conditional_2009}. Here, we used is the iteration algorithm in that article with truncation parameter $m=13$ and 19 equally spaced position quantiles, where $\tau_k = 5k\%$ and $k=1,\ldots,19$.
\item QGARCH-EE: Estimate volatility by QGARCH and obtain ES estimation from the empirical distribution of standardized returns.
\item QGARCH-EL: Estimate volatility by QGARCH and obtain ES estimation by empirical likelihood from the standardized returns.
\item CALS-EL: The method proposed in this article combined with composite asymmetric least square and empirical likelihood, where CALS-EL1 using Eq. (\ref{eq50}) and Eq. (\ref{eq51}), and CALS-EL2 using Eq. (\ref{eq52}) and Eq. (\ref{eq53}). They both use the truncation parameter $m=13$ and 19 equally spaced position expectiles ($\tau_k = 5k\%, k=1,\ldots,19$).
\end{itemize}

\begin{rem}
The CAViar and QGARCH method is proposed only for conditional VaR estimation but not for conditional ES estimation. Since QGARCH is an important benchmark of the proposed method, we combine it with the empirical distribution-based method and empirical likelihood method, such that the combinational methods QGARCH-EE and QGARCH-EL become available for conditional ES estimation.
\end{rem}

Here, we list the simulation results of different methods for $\alpha=0.95$. Table 1 and Table 2 show the result of conditional VaR estimation and conditional ES estimation, respectively.

\begin{table}[htbp]
\caption{Simulation results of conditional VaR estimation}
\scriptsize
\begin{tabular}{lcccc|cccc|ccccc}
\hline
\multicolumn{13}{c}{VaR Estimation}\\
\hline
\multirow{3}{*}{Method} & \multicolumn{4}{c}{Case 1}&\multicolumn{4}{c}{Case 2}&\multicolumn{4}{c}{Case 3}\\ \cline{2-5}\cline{6-9}\cline{10-13} & \multicolumn{2}{c}{normal} & \multicolumn{2}{c}{t(4)} & \multicolumn{2}{c}{normal} & \multicolumn{2}{c}{t(4)} & \multicolumn{2}{c}{normal} & \multicolumn{2}{c}{t(4)}\\ \cline{2-3}\cline{4-5}\cline{6-7}\cline{8-9}\cline{10-11}\cline{12-13}
& Bias & RMSE & Bias & RMSE & Bias & RMSE & Bias & RMSE & Bias & RMSE & Bias & RMSE \\
\hline
GGARCH & 0.1324 & 0.1825 & 0.5529 & 0.6318 & 0.2327& 0.3165 & 1.7966 & 2.0507 & 0.5946& 0.7790 & 2.5014 & 3.0477 \\
TGARCH & 0.1888 & 0.2002 & \textbf{0.1007} & 0.1921 & 0.3886 & 0.4126 & 0.2343 & 0.3466 & 0.7845 & 0.8193 & 0.6661 & 1.0053 \\
CAViar &0.1335 & 0.1685 & 0.2312 & 0.3730 &0.1223 & 0.1568 & 0.2425 & 0.3176 & 0.2166 & 0.2814 & 0.4765 & 0.6493\\
CARE   &0.1224 & 0.1510 & 0.2105 & 0.3451 &0.1294 & 0.1559 & 0.2336 & 0.3165 & 0.1987 & 0.2778 & 0.4825 & 0.6701\\
QGARCH & \textbf{0.0575} & \textbf{0.0787} & 0.1244 & 0.2220 & 0.1215 & 0.1572 & 0.2734 & 0.3744 & 0.2492 & 0.3175 & 0.5513 & 0.7244\\
\hline
CALS-EL1 & 0.0837 & 0.1054 & 0.1146 & 0.1640 & \textbf{0.1080} & 0.1396 & 0.1246 & 0.2512 & 0.1817 & 0.2567 & 0.2434 & 0.4302\\
CALS-EL2 & 0.0829 & 0.1030 & 0.1141 & \textbf{0.1637} & 0.1100 & \textbf{0.1389} & \textbf{0.1171} & \textbf{0.2243} & \textbf{0.1759} & \textbf{0.2500} & \textbf{0.2425} & \textbf{0.4109}\\
\hline
\end{tabular}
\end{table}

\begin{table}[htbp]
\caption{Simulation results of conditional ES estimation}
\scriptsize
\begin{tabular}{lcccc|cccc|ccccc}
\hline
\multicolumn{13}{c}{ES Estiamtion}\\
\hline
\multirow{3}{*}{Method} & \multicolumn{4}{c}{Case 1}&\multicolumn{4}{c}{Case 2}&\multicolumn{4}{c}{Case 3}\\ \cline{2-5}\cline{6-9}\cline{10-13} & \multicolumn{2}{c}{normal} & \multicolumn{2}{c}{t(4)} & \multicolumn{2}{c}{normal} & \multicolumn{2}{c}{t(4)} & \multicolumn{2}{c}{normal} & \multicolumn{2}{c}{t(4)}\\ \cline{2-3}\cline{4-5}\cline{6-7}\cline{8-9}\cline{10-11}\cline{12-13}
& Bias & RMSE & Bias & RMSE & Bias & RMSE & Bias & RMSE & Bias & RMSE & Bias & RMSE  \\
\hline
GGARCH & 0.1660 & 0.2289 & 0.6934 & 0.7923 & 0.2918 & 0.3970 & 2.2530 & 2.5717 & 0.7457 & 0.9769 & 3.1369 & 3.8220 \\
TGARCH & 0.2837 & 0.3008 & 0.1513 & 0.2886 & 0.5838 & 0.6198 & 0.3520 & 0.5207 & 1.1786 & 1.2309 & 1.0007 & 1.5103 \\
CARE & 0.1289 & 0.1556 & 0.2411 & 0.3744 & 0.1321 & 0.1625 & 0.2488 & 0.3353 & 0.2219 & 0.2864 & 0.5223 & 0.6940 \\
QGARCH-EE & 0.1728 & 0.2482 & 0.3241 & 0.5308 & 0.2877 & 0.4496 & 0.5002 & 0.6491 & 0.4904 & 0.7593 & 0.9128 & 1.2804\\
QGARCH-EL & \textbf{0.0917} & 0.1183 & \textbf{0.1205} & 0.2207 & 0.1248 & 0.1609 & 0.1470 & 0.3208 & 0.2140 & \textbf{0.2553} & \textbf{0.2808} & 0.5314\\
\hline
CALS-EL1 & 0.1015 & 0.1162 & 0.1325 & 0.2027 & 0.1187 & 0.1425 & 0.1456 & 0.2977 & 0.1929 & 0.2689 & 0.2829 & 0.5339\\
CALS-EL2 & 0.1001 & \textbf{0.1136} & 0.1316 & \textbf{0.2020} & \textbf{0.1187} & \textbf{0.1418} & \textbf{0.1362} & \textbf{0.2909} & \textbf{0.1852} & 0.2640 & 0.2816 &\textbf{0.5181}\\
\hline
\end{tabular}
\end{table}

The simulation results show that the proposed method has good performance in general for both VaR and ES estimations. For VaR estimation, our method has a smaller bias and RMSE for most cases, except the QGARCH method has a smaller bias and MSE for case 1 with normal innovation, and the TGARCH method has a smaller bias for case 1 with t(4) innovation. For ES estimation, CALS-EL2 and QGARCH-EL are the most two competitive methods. QGARCH-EE and QGARCH-EL both estimate the volatility structure first and then estimate the tail-related risks from the standardised residuals, and they follow the same procedure as the proposed method. We can see that the QGARCH-EL method significantly outperforms QGARCH-EE, which indicates the empirical likelihood is helpful in processing the standardised residuals. Additionally, this result implies that empirical likelihood can be combined with other volatility estimation methods.

In addition to the overall comparison result, there are two noteworthy details. First, compared to the other methods, the performance of our method is not sensitively influenced by the choice of the innovation distribution. Take the VaR estimation as the example. When the innovation distribution is normal, the bias and RMSE of QGARCH and CALS-EL do not appear to be much different. Even in case 1, QGARCH has better performance than CALS-EL. While the data are generated from t(4) innovation, the advantage of CALS-EL begins to stand out. The bias and RMSE of CALS-EL increase by 27.3\% and 55.6\%, respectively, when the innovation distribution changes from normal to t(4). These increases are 120.9\% and 182.1\% for the QGARCH method, and they are even bigger for other methods. This phenomenon still exists when estimating the conditional ES. In summary, the CALS-EL method is more robust to the innovation distribution than are the other methods. That is theoretically reasonable because this method is distribution-assumption-free for innovation series. We do not need to worry too much about the estimation effect for heavy-tailed innovation cases because the necessary assumption for innovation distribution (assumption [C2]) is not strict, even for some heavy-tailed distributions.

Another highlight is the performance of CALS-EL when the stationary condition of GARCH model is not 'good' enough. As we mentioned before, the bigger $|\beta_1|$ and $|\beta_1|+|\gamma_1|$ are, the closer the series are to the boundary of stationary condition. In case 2 and case 3, the CALS-EL outperforms the other methods. In the other word, our method is more adjusted to the the cases when the GARCH series is nearly nonstationary.

The simulation results show that CALS-EL has a good estimation effect in general. The adaptability of our method for different innovation distributions and different stationary cases implies that it will have broad application for different conditional heteroscedastic time series.

\section{Empirical application to global financial indices}
In the empirical illustration part, we follow the idea of \cite{taylor_estimating_2008} to make day-ahead estimations of VaR and ES for different stock indices. We consider six global financial indices: the British FTSE100, the French CAC40, the German DAX30, the Hong Kong HSI, the Japanese Nikkei225 and the US S\&P500. We use 2000 log returns before Sep 31, 2017, for practical evaluation. For each method, 1000 days of data will be used as in-sample, and out-of-sample prediction of VaR and ES will be produced for the left 1000 days.

\begin{table}[!h]
\caption{Summary statistics of log returns from six stock indices.}
\centering
\begin{tabular}{lcccccc}
\toprule
Index & Mean & Min & Max & S. dev. & Skew. & Kurt.\\
\midrule
FTSE100 & $2.630\times10^{-4}$ & -0.0478 & 0.0514 & 0.0100 & -0.0736 & 5.767\\
CAC40 & $2.299\times10^{-4}$ & -0.0838 & 0.0922 & 0.0134 & -0.1213 & 6.824\\
DAX30 & $4.629\times10^{-4}$ & -0.0707 & 0.0609 & 0.0129 & -0.2807& 5.627\\
HSI & $2.162\times10^{-4}$ & -0.0602 & 0.0699 & 0.0118 & -0.2078 & 5.566\\
Nikkei225 & $3.411\times10^{-4}$ & -0.1115 & 0.0743 & 0.0136 & -0.4799 & 8.090\\
SP500 & $4.922\times10^{-4}$ & -0.0690 & 0.0546 & 0.0096 & -0.3089 & 7.871\\
\bottomrule
\end{tabular}
\end{table}

As in the simulation, CALS-EL uses a truncated ARCH series with $m=13$ and 19 equally spaced position quantiles ($\tau_k = 5k\%, k=1,\ldots,19$) to perform the preliminary volatility estimation. Eq. (\ref{eq52}) and Eq. (\ref{eq53}) are used for the out-of-sample conditional tail-related risk estimations. Figure \ref{FTSE100estimate} presents an example of estimation for the FTSE100 index. The benchmark methods are similar to those used in the simulation, but with minor adjustments. The QGARCH-EE method is removed since it is not competitive with QGARCH-EL. Moreover, considering the leverage effect, we add two asymmetric GARCH-based methods, including GJRGARCH-EE and GJRGARCH-EE. They use the GJRGARCH(1,1) of \cite{Glosten1993} to model the volatility and obtain the VaR and ES estimates from the standardised returns via empirical estimation and the extreme value approach, respectively.

\begin{figure}[!h]
\begin{center}
\includegraphics[width=\textwidth]{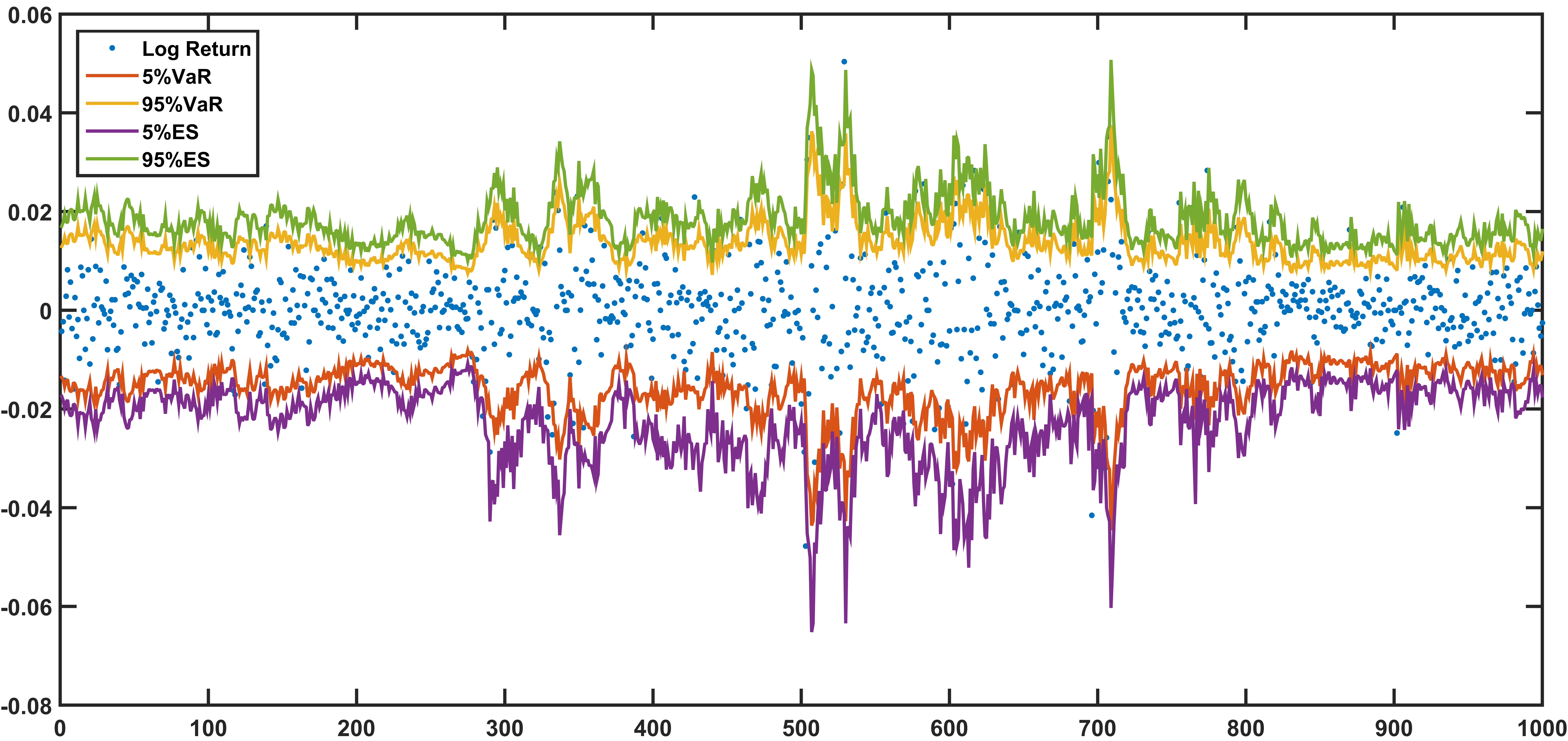}
\captionsetup{font={small}}
\caption{FTSE100 daily log-returns for the 1000 post-sample days with VaR and ES estimation using the CALS-EL method.}	
\label{FTSE100estimate}
\end{center}
\end{figure}

In addition to the estimations of conditional VaR and conditional ES, another value in which we are interested is the corresponding $\tau$ estimated via empirical likelihood. In fact, this quantity is time-variant since the distribution of innovation series (or the innovation that we observed, to be exact) changes over time for most financial time series. Take the result for the FTSE100 series as an example. Figure 3 shows the estimated $\tau$ series in processing the FTSE100 series for $\alpha=0.95$. As we can see, this series fluctuates with time, so if we use a fixed $\tau$ for the whole estimation procedure, it may cause a modelling error. The red line in the figure is the true value of the $\tau$ for normal distribution, and it is significantly above the $\tau$ that we select. Clearly, the normal distribution assumption is not a suitable choice here. If one must use a fixed $\tau$ value, a relatively reasonable choice is to use the corresponding $\tau$ of the distribution t(10) in the first three quarters of the window (the purple line) and to use the corresponding $\tau$ of distribution t(6) in the left quarter of the window (the yellow line). However, in practice, it is difficult to make a such predictive decision about the $\tau$ selection in advance. Regardless, this example has given us a lesson that it is not a good choice to select a fixed $\tau$ when estimating the conditional ES from expectile, which has not been noted previously.

\begin{figure}[!h]
\begin{center}
\includegraphics[width=\textwidth]{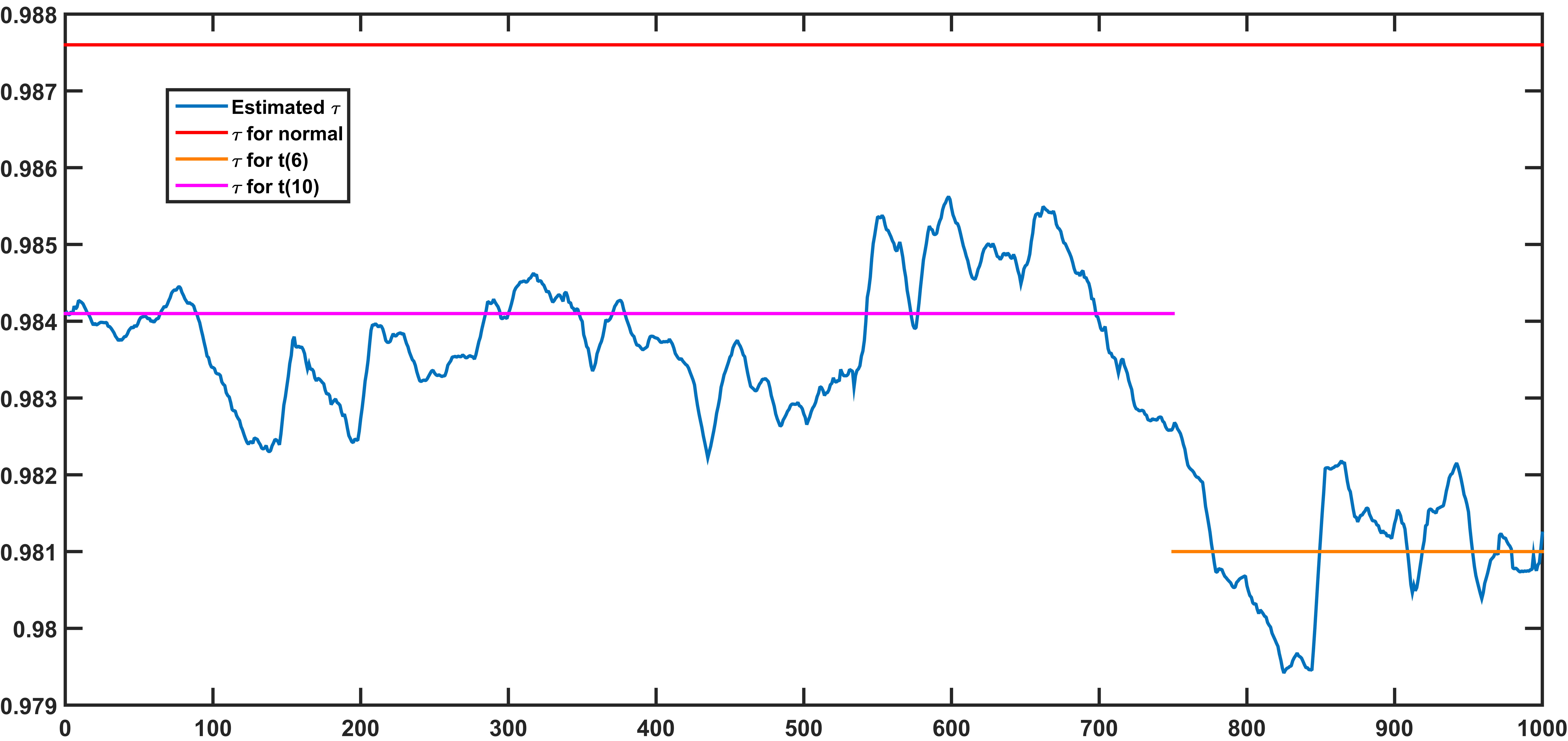}
\captionsetup{font={small}}
\caption{The value of $\tau$ estimated during the estimation procedure using CALS-EL for the FTSE100 index with $\alpha=0.95$.}	
\label{FTSE100estimate}
\end{center}
\end{figure}

Finally, we perform some backtesting to evaluate the results of empirical illustration. For conditional VaR estimation, we perform the Kupiec test \citep{Kupiec1995Techniques} and dynamic quantile (DQ) test \citep{engle_caviar_2004}, which both evaluate the estimation through the Hit variable defined as
\begin{eqnarray}
Hit_t=I(Y_t\geq\hat{Q}_\alpha(Y_t|\mathcal{F}_{t-1}))-\alpha.
\end{eqnarray}
The Kupiec test focuses on testing whether the $Hit_t$ has a zero mean or whether the percentage of real data exceeding the estimates equals $\alpha$. The DQ test proposed by \cite{engle_caviar_2004} involves the joint test of whether the hit variable has a Bernoulli distribution with $\alpha$ and is dependent on the lagged conditional VaR. We use the same lagged coefficient as in \cite{taylor_estimating_2008}, including four lags of hit variable and one lag of conational VaR estimate. Hence, the DQ test statistic is asymptotically distributed with $\chi^2(6)$.

For the conditional ES estimation, we use two evaluation tests. The first one is a bootstrap-based method proposed by \cite{mcneil_estimation_2000}, which tests whether the exceeding points of return have the same mean value as the conditional ES estimation. The other is the general conditional calibration (GCC) test recently proposed by \cite{nolde_elicitability_2017}, which is based on the joint elicitability of VaR and ES.

\begin{table}[!h]
\centering
\scriptsize
\newcommand{\tabincell}[2]{\begin{tabular}{@{}#1@{}}#2\end{tabular}}
\caption{Evaluation of conditional VaR and ES estimation for $\alpha=0.95$}
\begin{tabular}{cccccccccc}
\toprule
\multicolumn{2}{l}{ } & CAC40 & DAX30 & FTES100 & NIKKEI225 & SP500 & HSI &\tabincell{c}{Num of rejection\\ VaR test \\ at 5\% level} & \tabincell{c}{Num of rejection\\ ES test \\ at 5\% level}\\
\midrule
\multicolumn{10}{l}{\footnotesize\textbf{GGARCH}}\\
\multicolumn{2}{l}{Converge rate} & 0.032 & 0.045 & 0.034 & 0.036 & 0.029 & 0.040 & \multirow{5}{*}{\normalsize{6}} & \multirow{5}{*}{\normalsize{4}}\\
\cdashline{1-8}
\multirow{2}{*}{VaR Evaluation} & Kupiec & \textbf{0.005} & 0.544 & \textbf{0.014} & \textbf{0.033} &\textbf{0.001} & 0.133 \\
 & DQ &\textbf{0.007} & 0.894 & \textbf{0.002} & 0.163 & 0.051 & 0.685\\
\cdashline{1-8}
\multirow{2}{*}{ES Evaluation} & Bootstrap & \textbf{0.048} & 0.210 & 0.300 & 0.067 & 0.467 & 0.197\\
 & GCC & \textbf{0.005} & 0.430 & \textbf{0.023} & 0.074 & \textbf{0.000} & 0.446 \\
\midrule
\multicolumn{10}{l}{\footnotesize\textbf{TGARCH}}\\
\multicolumn{2}{l}{Converge rate} & 0.034 & 0.050 & 0.037 & 0.039 & 0.030 & 0.045 & \multirow{5}{*}{\normalsize{5}} & \multirow{5}{*}{\normalsize{7}}\\
\cdashline{1-8}
\multirow{2}{*}{VaR Evaluation} & Kupiec & \textbf{0.014} & 1.000 & \textbf{0.048} & 0.098 & \textbf{0.002} & 0.544 \\
 & DQ & 0.150 & 0.881 & \textbf{0.021} & 0.447 & \textbf{0.008} & 0.694\\
\cdashline{1-8}
\multirow{2}{*}{ES Evaluation} & Bootstrap & 0.278 & \textbf{0.002} & \textbf{0.000} & 0.219 & \textbf{0.000} & 0.078\\
 & GCC & \textbf{0.023} & \textbf{0.021} & \textbf{0.014} & 0.303 & \textbf{0.001} & 0.183 \\
 \midrule
\multicolumn{10}{l}{\footnotesize\textbf{GJRGARCH-EE}}\\
\multicolumn{2}{l}{Converge rate} & 0.043 & 0.044 & 0.046 & 0.041 & 0.036 & 0.041 & \multirow{5}{*}{\normalsize{1}} & \multirow{5}{*}{\normalsize{8}}\\
\cdashline{1-8}
\multirow{2}{*}{VaR Evaluation} & Kupiec & 0.299 & 0.375 & 0.577 & 0.178 &\textbf{0.033} & 0.178\\
 & DQ & 0.133 & 0.160 & 0.737 & 0.682 & 0.212 & 0.788\\
\cdashline{1-8}
\multirow{2}{*}{ES Evaluation} & Bootstrap & \textbf{0.002} & \textbf{0.008} & \textbf{0.008} & \textbf{0.025} & 0.075 & 0.116 \\
 & GCC & \textbf{0.005} & \textbf{0.013} & \textbf{0.015} & 0.052 & \textbf{0.007} & 0.253 \\
 \midrule
\multicolumn{10}{l}{\footnotesize\textbf{GJRGARCH-EV}}\\
\multicolumn{2}{l}{Converge rate} & 0.048 & 0.052 & 0.048 & 0.051 & 0.039 & 0.050 & \multirow{5}{*}{\normalsize{1}} & \multirow{5}{*}{\normalsize{1}}\\
\cdashline{1-8}
\multirow{2}{*}{VaR Evaluation} & Kupiec & 0.770 & 0.773 & 0.770 & 0.958 & 0.097 & 1.000 \\
 & DQ & 0.305 & \textbf{0.027} & 0.802 & 0.736 & 0.323 & 0.740\\
\cdashline{1-8}
\multirow{2}{*}{ES Evaluation} & Bootstrap & 0.261 & 0.532 & 0.543 & 0.328 & 0.340 & 0.138 \\
 & GCC & 0.308 & 0.730 & 0.643 & 0.702 & \textbf{0.043} & 0.267 \\
\midrule
\multicolumn{10}{l}{\footnotesize\textbf{Caviar}}\\
\multicolumn{2}{l}{Converge rate} & 0.046 & 0.046 & 0.046 & 0.048 & 0.039 & 0.046 & \multirow{5}{*}{\normalsize{0}} & \multirow{5}{*}{\normalsize{-}}\\
\cdashline{1-8}
\multirow{2}{*}{VaR Evaluation} & Kupiec & 0.557 & 0.557 & 0.557 & 0.770 & 0.097 & 0.557\\
 & DQ & 0.181 & 0.088 & 0.735 & 0.813 & 0.333 & 0.658\\
\cdashline{1-8}
\multirow{2}{*}{ES Evaluation} & Bootstrap & - & - & - & - & - & - \\
 & GCC & - & - & - & - & - & - \\
\midrule
\multicolumn{10}{l}{\footnotesize\textbf{CARE}}\\
\multicolumn{2}{l}{Converge rate} & 0.054 & 0.066 & 0.050 & 0.055 & 0.047 & 0.052 & \multirow{5}{*}{\normalsize{2}} & \multirow{5}{*}{\normalsize{0}}\\
\cdashline{1-8}
\multirow{2}{*}{VaR Evaluation} & Kupiec & 0.556 & \textbf{0.027} & 1.000 & 0.475 & 0.660 & 0.773 \\
 & DQ & 0.255 & \textbf{0.011} & 0.952 & 0.628 & 0.255 & 0.679\\
\cdashline{1-8}
\multirow{2}{*}{ES Evaluation} & Bootstrap & 0.292 & 0.235 & 0.295 & 0.310 & 0.726 & 0.112\\
 & GCC & 0.618 & 0.516 & 0.396 & 0.679 & 0.654 & 0.225 \\
\midrule
\multicolumn{10}{l}{\footnotesize\textbf{QGARCH(-EL)}}\\
\multicolumn{2}{l}{Converge rate} & 0.048 & 0.053 & 0.051 & 0.051 & 0.048 & 0.050 & \multirow{5}{*}{\normalsize{1}} & \multirow{5}{*}{\normalsize{2}}\\
\cdashline{1-8}
\multirow{2}{*}{VaR Evaluation} & Kupiec & 0.770 & 0.666 & 0.885 & 0.885 & 0.770 & 1.000 \\
 & DQ & 0.289 & \textbf{0.012} & 0.812 & 0.674 & 0.236 & 0.737 \\
\cdashline{1-8}
\multirow{2}{*}{ES Evaluation} & Bootstrap & \textbf{0.047} & 0.119 & 0.154 & 0.066 & 0.753 & 0.532 \\
 & GCC & \textbf{0.044} & 0.131 & 0.174 & 0.133 & 0.610 & 0.927 \\
\midrule
\multicolumn{10}{l}{\footnotesize\textbf{CALS-EL}}\\
\multicolumn{2}{l}{Converge rate} & 0.039 & 0.046 & 0.044 & 0.052 & 0.044 & 0.038  & \multirow{5}{*}{\normalsize{1}} & \multirow{5}{*}{\normalsize{0}}\\
\cdashline{1-8}
\multirow{2}{*}{VaR Evaluation} & Kupiec & 0.097 & 0.557 & 0.375 & 0.773 & 0.375 & 0.070 \\
 & DQ & 0.137 & 0.399 & 0.511 & 0.708 & 0.673 & \textbf{0.034}\\
\cdashline{1-8}
\multirow{2}{*}{ES Evaluation} & Bootstrap & 0.682 & 0.086 & 0.541 & 0.126 & 0.175 & 0.483\\
 & GCC & 0.515 & 0.146 & 0.516 & 0.053 & 0.333 & 0.178\\
\bottomrule
\end{tabular}
\end{table}

\begin{table}[!h]
\centering
\scriptsize
\newcommand{\tabincell}[2]{\begin{tabular}{@{}#1@{}}#2\end{tabular}}
\caption{Evaluation of conditional VaR and ES estimation for $\alpha=0.99$}
\begin{tabular}{cccccccccc}
\toprule
\multicolumn{2}{l}{ } & CAC40 & DAX30 & FTES100 & NIKKEI225 & SP500 & HSI &\tabincell{c}{Num of rejection\\ VaR test \\ at 5\% level} & \tabincell{c}{Num of rejection\\ ES test \\ at 5\% level}\\
\midrule
\multicolumn{10}{l}{\footnotesize\textbf{GGARCH}}\\
\multicolumn{2}{l}{Converge rate} & 0.013 & 0.017 & 0.015 & 0.022 & 0.017 & 0.015 & \multirow{5}{*}{\normalsize{5}} & \multirow{5}{*}{\normalsize{11}}\\
\cdashline{1-8}
\multirow{2}{*}{VaR Evaluation} & Kupiec & 0.362 & \textbf{0.043} & 0.139 & \textbf{0.000} &\textbf{0.043} & 0.139 \\
 & DQ & 0.924 & 0.352 & 0.056& \textbf{0.008} & \textbf{0.000} & 0.098\\
\cdashline{1-8}
\multirow{2}{*}{ES Evaluation} & Bootstrap & \textbf{0.000} & \textbf{0.000} & \textbf{0.027} & \textbf{0.004} & \textbf{0.000} & \textbf{0.000}\\
 & GCC & \textbf{0.023} & \textbf{0.025} & 0.098 & \textbf{0.042} & \textbf{0.033} & \textbf{0.030} \\
\midrule
\multicolumn{10}{l}{\footnotesize\textbf{TGARCH}}\\
\multicolumn{2}{l}{Converge rate} & 0.012 & 0.014 & 0.012 & 0.018 & 0.014 & 0.014 & \multirow{5}{*}{\normalsize{3}} & \multirow{5}{*}{\normalsize{4}}\\
\cdashline{1-8}
\multirow{2}{*}{VaR Evaluation} & Kupiec & 0.538 & 0.231 & 0.380 & \textbf{0.022} & 0.231 & 0.231 \\
 & DQ & 0.976 & 0.925 & \textbf{0.016} & 0.100 & \textbf{0.000} & 0.069 \\
\cdashline{1-8}
\multirow{2}{*}{ES Evaluation} & Bootstrap & 0.401 & 0.437 & \textbf{0.034} & \textbf{0.016} & \textbf{0.000} & 0.342\\
 & GCC & 0.883 & 0.964 & 0.149 & 0.097 & \textbf{0.029} & 0.819 \\
 \midrule
\multicolumn{10}{l}{\footnotesize\textbf{GJRGARCH-EE}}\\
\multicolumn{2}{l}{Converge rate} & 0.022 & 0.011 & 0.019 & 0.012 & 0.008 & 0.011 & \multirow{5}{*}{\normalsize{4}} & \multirow{5}{*}{\normalsize{6}}\\
\cdashline{1-8}
\multirow{2}{*}{VaR Evaluation} & Kupiec & \textbf{0.001} & 0.754 & \textbf{0.011} & 0.534 & 0.514 & 0.754\\
 & DQ & \textbf{0.000} & 0.715 & \textbf{0.028} & 0.921 & 0.954 & 0.201\\
\cdashline{1-8}
\multirow{2}{*}{ES Evaluation} & Bootstrap & \textbf{0.025} & \textbf{0.004} & 0.087 & \textbf{0.007} & \textbf{0.021} & \textbf{0.044} \\
 & GCC & 0.121 & 0.057 & 0.194 & \textbf{0.049} & 0.125 & 0.165 \\
 \midrule
\multicolumn{10}{l}{\footnotesize\textbf{GJRGARCH-EV}}\\
\multicolumn{2}{l}{Converge rate} & 0.010 & 0.012 & 0.007 & 0.015 & 0.008 & 0.007 & \multirow{5}{*}{\normalsize{0}} & \multirow{5}{*}{\normalsize{2}}\\
\cdashline{1-8}
\multirow{2}{*}{VaR Evaluation} & Kupiec & 1.000 & 0.538 & 0.314 & 0.139 & 0.510 & 0.314 \\
 & DQ & 0.987 & 0.919 & 0.967 & 0.802 & 0.911 & 0.984\\
\cdashline{1-8}
\multirow{2}{*}{ES Evaluation} & Bootstrap & \textbf{0.049} & 0.292 & 0.412 & \textbf{0.048} & 0.318 & 0.410 \\
 & GCC & 0.169 & 0.614 & 0.899 & 0.142 & 0.589 & 0.792 \\
\midrule
\multicolumn{10}{l}{\footnotesize\textbf{Caviar}}\\
\multicolumn{2}{l}{Converge rate} & 0.012 & 0.013 & 0.008 & 0.017 & 0.009 & 0.010 & \multirow{5}{*}{\normalsize{1}} & \multirow{5}{*}{\normalsize{-}}\\
\cdashline{1-8}
\multirow{2}{*}{VaR Evaluation} & Kupiec & 0.538 & 0.362 & 0.510 & \textbf{0.043} & 0.746 & 1.000\\
 & DQ & 0.949 & 0.903 & 0.979 & 0.473 & 0.857 & 0.989 \\
\cdashline{1-8}
\multirow{2}{*}{ES Evaluation} & Bootstrap & - & - & - & - & - & - \\
 & GCC & - & - & - & - & - & - \\
\midrule
\multicolumn{10}{l}{\footnotesize\textbf{CARE}}\\
\multicolumn{2}{l}{Converge rate} & 0.012 & 0.012 & 0.011 & 0.017 & 0.012 & 0.013 & \multirow{5}{*}{\normalsize{2}} & \multirow{5}{*}{\normalsize{2}}\\
\cdashline{1-8}
\multirow{2}{*}{VaR Evaluation} & Kupiec & 0.538 & 0.538 & 0.754 & \textbf{0.043} & 0.538 & 0.362 \\
 & DQ & 0.998 & 0.993 & 0.181 & 0.111 & \textbf{0.000} & 0.056\\
\cdashline{1-8}
\multirow{2}{*}{ES Evaluation} & Bootstrap & 0.464 & 0.182 & 0.212 & \textbf{0.049} & \textbf{0.006} & 0.352\\
 & GCC & 0.996 & 0.392 & 0.407 & 0.165 & 0.100 & 0.787 \\
\midrule
\multicolumn{10}{l}{\footnotesize\textbf{QGARCH(-EL)}}\\
\multicolumn{2}{l}{Converge rate} & 0.013 & 0.014 & 0.009 & 0.019 & 0.009 & 0.014 & \multirow{5}{*}{\normalsize{1}} & \multirow{5}{*}{\normalsize{3}}\\
\cdashline{1-8}
\multirow{2}{*}{VaR Evaluation} & Kupiec & 0.362 & 0.231 & 0.746 & \textbf{0.011} & 0.746 & 0.231 \\
 & DQ & 0.873 & 0.772 & 0.995 & 0.160 & 0.864 & 0.081 \\
\cdashline{1-8}
\multirow{2}{*}{ES Evaluation} & Bootstrap & \textbf{0.012} & \textbf{0.003} & 0.279 & 0.422 & 0.390 & 0.136 \\
 & GCC & 0.059 & \textbf{0.042} & 0.621 & 0.924 & 0.786 & 0.292 \\
\midrule
\multicolumn{10}{l}{\footnotesize\textbf{CALS-EL}}\\
\multicolumn{2}{l}{Converge rate} & 0.012 & 0.009 & 0.009 & 0.013 & 0.010 & 0.007  & \multirow{5}{*}{\normalsize{1}} & \multirow{5}{*}{\normalsize{3}}\\
\cdashline{1-8}
\multirow{2}{*}{VaR Evaluation} & Kupiec & 0.538 & 0.746 & 0.746 & 0.362 & 1.000 & 0.314 \\
 & DQ & 0.733 & 0.998 & 0.781 & 0.294 & \textbf{0.011} & 0.067\\
\cdashline{1-8}
\multirow{2}{*}{ES Evaluation} & Bootstrap & 0.261 & 0.416 & \textbf{0.001} & 0.146 & \textbf{0.002} & 0.164\\
 & GCC & 0.350 & 0.956 & \textbf{0.037} & 0.341 & 0.058 & 0.258\\
\bottomrule
\end{tabular}
\end{table}

Table 4 and Table 5 list the 95\% and 99\% evaluation results, including the coverage rates of the estimation (the frequency of the Hit series is equal to 1) and p-values of different tests. The final two columns present the counts for the number of series for which the null is rejected at the 5\% significance level (the former for the VaR test and the latter for the ES test), which are the main evaluation bases of comparison. We can see that these improved methods, including GJRGARCH-EV, QGARCH, Caviar, CARE and CALS-EL, perform better than does the fitter GARCH model with a certain distribution assumption. GJRGARCH-EV, QGARCH and CaViar put particular emphasis on the conditional VaR estimation by constructing a conditional VaR specification, so they have a better effect in this respect. The performance of CARE and CALS-EL (both of which are based on expectile) in conditional VaR estimation is also acceptable, although they do not fit the conditional VaR specification directly. The speciality of CARE and CALS-EL is conditional ES estimation. Predicting the conditional ES through expectiles has greater advantages than does using the GARCH model directly, as demonstrated by the evaluation results. In summary, the CALS-EL is competitive for conditional tail-related risk estimation, including VaR and ES.

\section{Conclusion and further discussion}
In this paper, we proposed a method for estimating the conditional VaR and conditional ES for conditional heteroscedastic time series. The proposed method uses CALS for volatility estimation and empirical likelihood for determining the $\tau$ for a fixed $\alpha$. The estimation procedure first estimates the volatility and then obtains the tail-related risks from the innovation series based on the relations among the VaR, ES and expectile. Asymptotic properties of the method are given in the paper. We also evaluate the performance of the method using both simulation study and an empirical application.

The main contributions of our work can be summarised as follows. First, we extend the method of \cite{xiao_conditional_2009} for volatility estimation in CALS, which avoids complex matrix decomposition calculations. Second, we use the empirical likelihood method to determine the $\tau$, which is an important contribution of our method. The empirical likelihood method provides us with a distribution-assumption-free and flexible data-driven way to determine the mapping $\tau=h_\varepsilon(\alpha)$, which fundamentally connects the VaR, ES and expectile.

Although we apply our proposed method to the GARCH model, we should note that the method is not limited to solving the linear GARCH model. The proposed method could be also applied in other cases by using different optimisation functions in Eq. (\ref{hatp}) to fit other conditional heteroscedastic series. Furthermore, the empirical likelihood estimation procedure is relatively independent of the volatility estimation process, only requiring an advisable approximation to the original innovation series. Hence, it can be combined with other volatility estimation procedures. This characteristic makes the proposed method applicable to a wide class of estimation scenarios.

For the tail-related risk, ES is more difficult to predict and backtest since it is not elicitable as VaR is. Expectile provides more potential to analyse ES due to its relation with VaR and ES. Finding the mapping $\tau=h(\alpha)$ from the quantile significance level $\alpha$ to the expectile significance level $\tau$ is an important issue when using expectile to analyse VaR and ES. Hence, the method to determine the value of $\tau$ deserves more attention. In an empirical application, Figure 3 indicates that this mapping shares time-varying features, and the empirical likelihood provides us with data-driven inspiration to address this issue.

\section*{Acknowledgements}
The authors thank James Taylor for informing us some details about the CARE model. Yi Zhang thanks support from the Zhejiang Provincial Science Foundation (No: LY18A010005, LY17A010016), Zhejiang University Education Foundation ZJU-Stanford Collaboration Fund, and the Fundamental Research Funds for the Central Universities. Sheng Wu thanks support from the MOE Project of Humanities and Social Sciences (No. 17YJA910003).

\appendix
\newtheorem{lemapp}{\emph{Lemma}}
\newtheorem{defnapp}[lemapp]{\emph{Definition}}
\section{Proof}
\subsection{Proof of Theorem \ref{thm1}}
We use the conclusion in \cite{RePEc:eee:ecochp:4-45} to prove the consistence of $\tilde{\theta}$ in Theorem \ref{thm1}. Let us first give some definitions and lemmas in \cite{RePEc:eee:ecochp:4-45}.
\begin{defnapp}Let $\Theta\subset \mathbb{R}^p$, $\{w_t: t=1,2,\ldots\}$ be a sequence of random vectors with $w_t\in\mathcal{W}_t$, $t=1,2,\ldots$, and $\{\rho_t: t=1,2,\ldots\}$ be a sequence of real-valued functions, where $\mathcal{W}_t$ is the sample space of $\{w_t: t=1,2,\ldots\}$. Assume that
\begin{enumerate}[(a)]
\item $\Theta$ is compact;
\item $\rho_t$ satisfies the standard measurability and continuity on $\mathcal{W}_t\times\Theta, t=1,2,\ldots$;
\item $\mathbf{E}[|\rho_t(w_t,\theta)|]<\infty$ for all $\theta\in\Theta$, $t=1,2,\ldots$;
\item $\lim_{T\rightarrow\infty}n^{-1}\sum_{t=1}^{n}\mathbf{E}[\rho_t(w_t,\theta)]$ exists for all $\theta\in\Theta$;
\item $\max_{\theta\in\Theta}|n^{-1}\sum_{t=1}^{n}\rho_t(w_t,\theta)-\mathbf{E}[\rho_t(w_t,\theta)]|\stackrel{p}\rightarrow0$.
\end{enumerate}
Then $\{\rho_t(w_t,\theta)\}$ is said to satisfiers the uniform weak law of large number (denoted by UWLLN) on $\Theta$.
\end{defnapp}

The sufficient conditions of UWLLN for the stationary ergodic case is given by the following lemma in \cite{rao_relations_1962}.

\begin{lemapp}\label{lemmaapp2}
Let $\Theta\subset\mathbb{P}^p$, $\{w_t: t=1,2,\ldots\}$ be a sequence of stationary and ergodic $M\times1$ random vectors, and $\rho:\mathcal{W}\times\Theta\rightarrow\mathbb{R}$ be a real-valued function. With the following conditions, $\{\rho(w_t,\theta)\}$ satisfiers the UWLLN on $\Theta$.
\begin{enumerate}[(a)]
\item $\Theta$ is compact;
\item $\rho$ satisfies the standard measurability and continuity on $\mathcal{W}\times\Theta$;
\item for some function $b:\mathcal{W}\rightarrow\mathbb{R}$ with $\mathbf{E}[b(w_t)]<\infty$, and $|\rho(w,\theta)|\leq b(w)$ for all $\theta\in\Theta$.
\end{enumerate}
\end{lemapp}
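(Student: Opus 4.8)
The plan is to establish the only substantive conclusion of the lemma, namely part (e) of the UWLLN definition, since the remaining requirements hold almost for free: conditions (a) and (b) are assumed directly, condition (c) of the UWLLN ($\mathbf{E}[|\rho(w_t,\theta)|]<\infty$) is inherited from the domination hypothesis, and condition (d) is automatic because stationarity makes $\mathbf{E}[\rho(w_t,\theta)]$ independent of $t$, so the Ces\`aro limit trivially exists. The strategy for (e) is the classical covering argument: combine Birkhoff's pointwise ergodic theorem at \emph{finitely many} parameter values with a uniform control of the oscillation of $\rho$ in $\theta$.

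First I would fix $\theta\in\Theta$ and invoke the ergodic theorem. Since $\{w_t\}$ is stationary and ergodic and $\mathbf{E}[|\rho(w_t,\theta)|]\leq\mathbf{E}[b(w_t)]<\infty$, the average $n^{-1}\sum_{t=1}^{n}\rho(w_t,\theta)$ converges almost surely, hence in probability, to $Q(\theta)\triangleq\mathbf{E}[\rho(w_t,\theta)]$. Next I would record that $Q$ is continuous on $\Theta$: continuity of $\theta\mapsto\rho(w,\theta)$ for each $w$ together with the integrable envelope $b$ lets the dominated convergence theorem pass continuity through the expectation.

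The key step is the oscillation control. For $\theta^{*}\in\Theta$ and $\delta>0$ define the local modulus
\[
d(w,\theta^{*},\delta)=\sup_{\|\theta-\theta^{*}\|<\delta}|\rho(w,\theta)-\rho(w,\theta^{*})|,
\]
which is measurable because continuity in $\theta$ permits taking the supremum over a countable dense subset of the ball. As $\delta\downarrow 0$ continuity gives $d(w,\theta^{*},\delta)\to 0$ for each $w$, while $d(w,\theta^{*},\delta)\leq 2b(w)$ is dominated, so dominated convergence yields $\mathbf{E}[d(w,\theta^{*},\delta)]\to 0$. Thus for any $\epsilon>0$ and each $\theta^{*}$ one can choose a radius $\delta(\theta^{*})$ with $\mathbf{E}[d(w,\theta^{*},\delta(\theta^{*}))]<\epsilon$. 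The open balls $B(\theta^{*},\delta(\theta^{*}))$ cover the compact set $\Theta$, so a finite subcover with centers $\theta_{1},\dots,\theta_{N}$ exists.

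Finally I would patch the pieces together. For arbitrary $\theta$ lying in some $B(\theta_{j},\delta_{j})$, the triangle inequality bounds $|n^{-1}\sum_{t}\rho(w_t,\theta)-Q(\theta)|$ by the local average oscillation $n^{-1}\sum_{t}d(w_t,\theta_{j},\delta_{j})$, the centered deviation at $\theta_{j}$, and $|Q(\theta_{j})-Q(\theta)|$. Taking the maximum over $\theta$ reduces everything to the finitely many centers: the oscillation averages converge in probability to $\mathbf{E}[d(w,\theta_{j},\delta_{j})]<\epsilon$ by the ergodic theorem, the centered deviations vanish by the pointwise law applied at $\theta_{1},\dots,\theta_{N}$, and the $Q$-differences are below $\epsilon$ by continuity of $Q$. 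Letting $n\to\infty$ and then $\epsilon\to 0$ delivers (e). The main obstacle is the oscillation step: confirming that $d(w,\theta^{*},\delta)$ is measurable and that the ergodic theorem applies to the stationary ergodic sequence $\{d(w_t,\theta_{j},\delta_{j})\}_{t}$, which is precisely what collapses a uniform statement over the uncountable set $\Theta$ into control at only finitely many parameter points.
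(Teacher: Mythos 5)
Your proof is correct, but note that the paper itself offers no proof of this lemma at all: it is stated as an imported result, attributed to \cite{rao_relations_1962}, and used as a black box in the proof of Theorem 4.1. What you have written is precisely the classical argument that underlies that citation: Birkhoff's pointwise ergodic theorem at finitely many parameter values, dominated-convergence control of the local oscillation $d(w,\theta^{*},\delta)$, and compactness of $\Theta$ to collapse the uniform statement onto those finitely many centers. The substantive steps all hold: for fixed $(\theta_j,\delta_j)$ the sequence $\{d(w_t,\theta_j,\delta_j)\}_t$ is a fixed measurable function of $w_t$, hence stationary and ergodic, and it is integrable because it is dominated by $2b(w_t)$, so the ergodic theorem applies to it exactly as you claim; measurability of the supremum follows from continuity in $\theta$ and a countable dense subset of the ball. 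Two cosmetic refinements: (i) the bound $|Q(\theta)-Q(\theta_j)|<\epsilon$ on all of $B(\theta_j,\delta_j)$ is better justified by $|Q(\theta)-Q(\theta_j)|\leq \mathbf{E}[d(w,\theta_j,\delta_j)]<\epsilon$ than by appealing to continuity of $Q$, since continuity alone only guarantees the bound on a possibly smaller ball than the one chosen for oscillation control; (ii) the quantity $\max_{\theta\in\Theta}|n^{-1}\sum_{t}\rho(w_t,\theta)-Q(\theta)|$ appearing in condition (e) is itself a supremum over an uncountable set, and its measurability deserves the same one-line remark (separability of $\Theta$ plus continuity in $\theta$). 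Neither point affects correctness; your argument supplies, in full, what the paper delegates to the reference.
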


These conditions are used to check that the optimisation function of the CALS satisfies the UWLLN on $\Theta$. Before that, we give a lemma belonging to \citet{RePEc:eee:ecochp:4-45}, from which the consistency of $\tilde{\theta}$ is derived.
\begin{lemapp}\label{lemmaapp3}
Let $\Theta\subset\mathbb{R}^p$, $\{w_t: t=1,2,\ldots\}$ be a sequence of random vectors and $\{\rho_t:\mathcal{W}_t\times\Theta\rightarrow\mathbb{R}, t=1,2,\ldots\}$ be the sequence of objective functions. Assume that
\begin{enumerate}[(a)]
\item $\Theta$ is compact;
\item $\rho_t$ satisfies the standard measurability and continuity on $\mathcal{W}_t\times\Theta, t=1,2,\ldots$;
\item $\{\rho_t(w_t,\theta), t=1,2,\ldots\}$ satisfies the UWLLN on $\Theta$;
\item On $\Theta$, there exists an unique minimizer $\theta_0$ of the function $$\bar{\rho}(\theta)\triangleq\lim_{n\rightarrow\infty}n^{-1}\sum_{t=1}^{n}\mathbf{E}[\rho_t(w_t,\theta)].$$
\end{enumerate}
Then there exists a random vector, $\tilde{\theta}$, which minimizes $\sum_{t=1}^{n}\rho(w_t,\theta)$ and satisfies $\tilde{\theta}\stackrel{p}\rightarrow\theta_0$, as $n\rightarrow\infty$.
\end{lemapp}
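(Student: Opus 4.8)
The plan is to prove this standard consistency result for extremum estimators by the classical two-stage argument: first secure the existence of a measurable minimiser $\tilde{\theta}$, and then establish $\tilde{\theta}\stackrel{p}{\rightarrow}\theta_0$ through the interplay between an identification gap and uniform convergence. I would begin by observing that condition (b) makes the sample criterion $Q_n(\theta)\triangleq\sum_{t=1}^{n}\rho(w_t,\theta)$ continuous in $\theta$ for each realisation, so by the compactness in (a) it attains its infimum on $\Theta$; invoking a measurable selection theorem then produces a random vector $\tilde{\theta}$ that realises this minimum. I would also record that the limiting criterion $\bar{\rho}$ is continuous, which follows from the continuity in (b) together with dominated convergence, so that infima of $\bar{\rho}$ over compact subsets are attained.

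For the consistency itself, fix $\varepsilon>0$ and let $N_\varepsilon$ denote the open $\varepsilon$-ball about $\theta_0$. The set $\Theta\setminus N_\varepsilon$ is compact and $\bar{\rho}$ is continuous on it, so by the uniqueness of $\theta_0$ as minimiser in (d) the separation
\[
\delta\triangleq\inf_{\theta\in\Theta\setminus N_\varepsilon}\bar{\rho}(\theta)-\bar{\rho}(\theta_0)
\]
is strictly positive. The UWLLN in (c) then furnishes an event $A_n$ with $P(A_n)\rightarrow1$ on which $\sup_{\theta\in\Theta}|n^{-1}Q_n(\theta)-\bar{\rho}(\theta)|<\delta/3$. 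On $A_n$, since $\tilde{\theta}$ minimises $Q_n$, we have $n^{-1}Q_n(\tilde{\theta})\leq n^{-1}Q_n(\theta_0)<\bar{\rho}(\theta_0)+\delta/3$, whence $\bar{\rho}(\tilde{\theta})<\bar{\rho}(\theta_0)+2\delta/3$; if $\tilde{\theta}$ lay outside $N_\varepsilon$ we would simultaneously have $\bar{\rho}(\tilde{\theta})\geq\bar{\rho}(\theta_0)+\delta$, a contradiction. Hence $\tilde{\theta}\in N_\varepsilon$ on $A_n$, so $P(\tilde{\theta}\in N_\varepsilon)\geq P(A_n)\rightarrow1$, which is precisely $\tilde{\theta}\stackrel{p}{\rightarrow}\theta_0$.

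The step I expect to carry the real weight is securing the strictly positive identification gap $\delta$: this is where the uniqueness of the minimiser in (d), the compactness in (a), and the continuity of $\bar{\rho}$ must be combined to upgrade mere pointwise uniqueness into the quantitative separation that uniform convergence can then exploit. The measurable-selection step ensuring that $\tilde{\theta}$ may be chosen as a genuine random vector is a secondary but non-trivial technicality, standard in the extremum-estimation literature and best handled by citing the result of \cite{RePEc:eee:ecochp:4-45} directly.
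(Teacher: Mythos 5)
You should note at the outset that the paper never proves this lemma: it is imported from \cite{RePEc:eee:ecochp:4-45} and used as a black box in the proof of Theorem 4.1, so your proposal can only be measured against the standard extremum-estimator consistency argument that this citation stands for. Your architecture is exactly that argument --- existence of a measurable minimiser from compactness, continuity and measurable selection, then the contradiction between an identification gap $\delta>0$ and uniform convergence --- and your second paragraph executes the contradiction correctly \emph{granted} its key display.

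The genuine gap is in that display. The UWLLN, as defined in the paper's appendix, asserts in its condition (e) only that $\max_{\theta\in\Theta}|n^{-1}\sum_{t=1}^{n}\{\rho_t(w_t,\theta)-\mathbf{E}[\rho_t(w_t,\theta)]\}|\stackrel{p}{\rightarrow}0$; that is, $n^{-1}Q_n$ is uniformly close to $\bar{\rho}_n(\theta)\triangleq n^{-1}\sum_{t=1}^{n}\mathbf{E}[\rho_t(w_t,\theta)]$, not to $\bar{\rho}$. Condition (d) of that definition gives only \emph{pointwise} convergence $\bar{\rho}_n\rightarrow\bar{\rho}$, and this cannot be upgraded to the uniform statement $\sup_{\theta}|n^{-1}Q_n(\theta)-\bar{\rho}(\theta)|<\delta/3$ that you invoke. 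Nor is this a repairable technicality under the stated hypotheses: with $\Theta=[0,1]$ and deterministic $\rho_t$ chosen so that $\bar{\rho}_n(\theta)=\theta^2-2\max\left(0,\,1-n\left|\theta-\tfrac{1}{2}-\tfrac{1}{n}\right|\right)$ (take $\rho_t=t\bar{\rho}_t-(t-1)\bar{\rho}_{t-1}$, each continuous), all of conditions (a)--(e) of the UWLLN definition hold, $\bar{\rho}(\theta)=\theta^2$ has the unique minimiser $\theta_0=0$, yet the minimisers of $\sum_{t\leq n}\rho_t$ converge to $1/2$. So the lemma as literally stated requires either Wooldridge's ``identifiable uniqueness'' condition (positivity of $\liminf_n[\inf_{\Theta\setminus N_\varepsilon}\bar{\rho}_n(\theta)-\bar{\rho}_n(\theta_0)]$) in place of (d), or homogeneity and stationarity. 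Your proof is valid precisely in the situation where the paper applies the lemma: $\rho_t\equiv\rho$ and $w_t$ strictly stationary, so that $\bar{\rho}_n\equiv\bar{\rho}$ for every $n$ and your display \emph{is} condition (e); you should state that reduction explicitly. Likewise, your continuity claim for $\bar{\rho}$ via dominated convergence uses a dominating function that is not among the lemma's hypotheses but is supplied in the application by condition (c) of Lemma~\ref{lemmaapp2}.
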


\begin{proof}[Proof of Theorem \ref{thm1}]
It is obvious that the conditions (a) and (b) in Lemma \ref{lemmaapp2} and Lemma \ref{lemmaapp3} are hold under assumptions [A1] and [A3]. Hence, we only need to check the condition (c) in Lemma \ref{lemmaapp2}, and then we can learn that the optimisation function of CALS satisfies the UWLLN on $\Theta$.

For notational convenience, we denote the loss function $\rho_{\tau_k}$ by $\rho_k$ in the following proof. We rewrite the loss function of CALS as $$\rho(w_t,\theta)=\sum_{k=1}^{K}\rho_k(w_t,\theta)=\sum_{k=1}^{K}\rho_k^s(w_t,\theta_k),$$ where
$\theta_k=(u_k a_0,u_k a_1,\ldots,u_k a_m)^T$ is reconstructed parameters from $\theta$, and $\rho_k^s(w_t,\theta_k)$ is the single asymmetric least squares loss function with parameters $\theta_k$, for $k=1,\ldots,K$. It has been proved by \cite{newey_asymmetric_1987} that for a single asymmetric least squares objective function $\rho_k(w_t,\theta_k)$, there exist constants $d_k$ and $d'_k$ such that $|\rho_k(w_t,\theta_k)|\leq|w_t|^2(d_k+d'_k|\theta_k|^2)$. Since $|\theta_k|^2\leq|\theta|^4$ for $k=1,\ldots,K$, we have $$|\rho(w_t,\theta)|\leq|w_t|^2(\sum_{k=1}^{K}d_k+\sum_{k=1}^{K}d'_k|\theta|^4).$$
Therefore, there exists a constant $M$ such that $|\rho(w_t,\theta)|\leq|w_t|^2M$ since $\Theta$ is compact, and it is easy to show that $\mathbf{E}[|w_t|^2M]<\infty$.

Now we have proved that the optimisation function of CALS satisfies the UWLLN on $\Theta$. In addition, \citet{newey_asymmetric_1987} proves that $\lim_{n\rightarrow\infty}n^{-1}\sum_{t=1}^{n}\mathbf{E}[\rho_k(w_t,\theta_k)]$ has a unique minimizer $\theta_{k0}$ for $k=1,\ldots,K$. Hence, $\lim_{n\rightarrow\infty}n^{-1}\sum_{t=1}^{n}\mathbf{E}[\rho(w_t,\theta)]=\lim_{n\rightarrow\infty}n^{-1}\sum_{t=1}^{n}\sum_{k=1}^{K}\mathbf{E}[\rho_k(w_t,\theta_k)]$ also has a unique minimizer $\theta_0$ when fix $a_0=1$, and the condition (d) in Lemma \ref{lemmaapp3} is hold. So far, the proof is completed with verifying all conditions in Lemma \ref{lemmaapp3}.
\end{proof}

\subsection{Proof of Theorem \ref{thm2}}
The proof for asymptotic normality of the CALS estimation follows the Theorem 3 in \cite{huber1967} and is similar to the proof of Theorem 3 in \cite{newey_asymmetric_1987}.
\begin{proof}
According to the conclusion in \cite{newey_asymmetric_1987} that $\mathbf{E}[\rho_k(w_t,\theta_k)]$ is twice continuously differentiable in $\theta_k$ for $k=1,\ldots,K$, we can learn that $\mathbf{E}[\rho(w_t,\theta)]$ is also twice continuously differentiable in $\theta$ with
\begin{eqnarray}
\zeta(\theta)=\partial\mathbf{E}[\rho(w_t,\theta)]/\partial\theta=\mathbf{E}[\psi(w_t,\theta)],
\end{eqnarray}
and
\begin{eqnarray}
\partial\zeta(\theta)/\partial\theta=\partial^{2}\mathbf{E}[\rho(w_t,\theta)]/\partial\theta\partial\theta'=\mathbf{E}[\partial\psi(w_t,\theta)/\partial\theta'],
\end{eqnarray}
where $\psi(w_t,\theta)\triangleq\partial\rho(w_t,\theta)/\partial\theta$. Since there are two parts of parameters in $\theta$, the derivative of the optimisation function has a more complex form. For notational convenience, we use the following notation $$v_k(w_t,\theta)\triangleq Y_t-u_k(a_0+a_1|Y_{t-1}|+\ldots+a_m|Y_{t-m}|)=Y_t-u_k\eta^Tx_{t,(m)},$$ $$\varphi_k(w_t,\theta)\triangleq|\tau_k-\mathbf{1}_{v_k(w_t,\theta)\leq0}|,$$ for $k=1,\ldots,K$. Then, $\psi(w_t,\theta)$ can be represented as
\begin{eqnarray*}
\psi(w_t,\theta)& = &\sum_{k=1}^{K}\partial\rho_{k}(Y_t-\mu_k\eta^{T}x_{t,(m)})/\partial\theta =\sum_{k=1}^{K}\left[\begin{array}{c}\partial\rho_{k}(Y_t-\mu_k\eta^{T}x_{t,(m)})/\partial\vartheta\\\partial\rho_{k}(Y_t-\mu_k\eta^{T}x_{t,(m)})/\partial\eta\end{array}\right]\\
& = &\sum_{k=1}^{K} \left[\begin{array}{c} A_k \\ B_k \end{array}\right],\\
& \triangleq &\sum_{k=1}^{K} \left[\begin{array}{c}0\\\vdots\\-2\varphi_k(w_t,\theta)v_k(w_t,\theta)\eta^Tx_{t,(m)}\\\vdots\\0\\-2\varphi_k(w_t,\theta)v_k(w_t,\theta)u_kx_{t,(m)}\end{array}\right],
\end{eqnarray*}
where $A_k=[0,\ldots,-2\varphi_k(w_t,\theta)v_k(w_t,\theta)\eta^Tx_{t,(m)},\ldots,0]^T$ has dimension $K\times1$ and a nonzero element in $k$-th line and $B_k=-2\varphi_k(w_t,\theta)v_k(w_t,\theta)u_kx_{t,(m)}$, for $k=1,\ldots,K$.
\small\begin{eqnarray*}
&&\partial\psi(w_t,\theta)/\partial\theta'=\sum_{k=1}^{K}\left[\begin{array}{cc}\partial^2\rho_k(w_t,\theta)/\partial\vartheta\partial\vartheta' & \partial^2\rho_k(w_t,\theta)/\partial\vartheta\partial\eta'\\ \partial^2\rho_k(w_t,\theta)/\partial\eta\partial\vartheta' & \partial^2\rho_k(w_t,\theta)/\partial\eta\partial\eta'\end{array}\right]\\&=&\sum_{k=1}^{K}\left[\begin{array}{cc}{\left(\begin{array}{ccccc}0& & & & \\ &\ddots& & & \\ & &2\varphi_k(w_t,\theta)\eta^Tx_{t,(m)}x_{t,(m)}^T\eta& & \\ & & & \ddots& \\ & & & &0\end{array}\right)} & {\left(\begin{array}{c}0\\\vdots\\-2\varphi_k(w_t,\theta)(Y_t-2u_k\eta^Tx_{t,(m)})x_{t,(m)}^T\\\vdots\\0\end{array}\right)}\\ {\left(\begin{array}{c}0\\\vdots\\-2\varphi_k(w_t,\theta)(Y_t-2u_k\eta^Tx_{t,(m)})x_{t,(m)}^T\\\vdots\\0\end{array}\right)}^T & 2\varphi_k(w_t,\theta)u_k^2x_{t,(m)}x_{t,(m)}^T\end{array}\right].
\end{eqnarray*}
\normalsize
It is obvious that $\zeta(\theta_0)=0$ and Assumptions [A1]-[A3] are sufficient for [N1],[N2],[N4] in Theorem 3 of \cite{huber1967}. To prove the condition [N3] in \cite{huber1967}, we need to show that $\Sigma=\mathbf{E}[\partial\psi(w_t,\theta_0)/\partial\theta']$ is not singular.

Since $\mathbf{E}[\varphi_k(w_t,\theta)(Y_t-u_k\eta^Tx_{t,(m)})]_{|\theta=\theta_0}=0$ for $k=1,\ldots,K$, we have
\small\begin{eqnarray*}
&&\Sigma=\mathbf{E}[\partial\psi(w_t,\theta_0)/\partial\theta']\\
&=&\sum_{k=1}^{K}\mathbf{E}\left[\begin{array}{cc}{\left(\begin{array}{ccccc}0& & & & \\ &\ddots& & & \\ & &2\varphi_k(w_t,\theta)\eta^Tx_{t,(m)}x_{t,(m)}^T\eta& & \\ & & & \ddots& \\ & & & &0\end{array}\right)} & {\left(\begin{array}{c}0\\\vdots\\-2\varphi_k(w_t,\theta)u_k\eta^Tx_{t,(m)}x_{t,(m)}^T\\\vdots\\0\end{array}\right)}\\ {\left(\begin{array}{c}0\\\vdots\\-2\varphi_k(w_t,\theta)u_k\eta^Tx_{t,(m)}x_{t,(m)}^T\\\vdots\\0\end{array}\right)}^T & 2\varphi_k(w_t,\theta)u_k^2x_{t,(m)}x_{t,(m)}^T\end{array}\right]_{\theta=\theta_0}\\
&=&\sum_{k=1}^{K}\mathbf{E}[2\varphi_k(w_t,\theta)\xi_k(w_t,\theta_0)\xi_k^T(w_t,\theta_0)],
\end{eqnarray*}
\normalsize
where \begin{eqnarray*}\xi_k(w_t,\theta)=\left[\begin{array}{c}0\\\vdots\\\eta^Tx_{t,(m)}\\\vdots\\0\\-u_kx_{t,(m)}\end{array}\right].\end{eqnarray*}
Hence, $\Sigma$ is nonnegative definite, and it is not trivial to show that $\Sigma$ is not singular with Assumption [A4]. According to the nonsingularity of $\Sigma$, there are positive constants $\kappa$ and $\kappa'$ such that
\begin{eqnarray}
|\theta-\theta_0|<\kappa\Rightarrow|\zeta(\theta)|>\kappa'|\theta-\theta_0|.
\end{eqnarray}
Let $U(w_t,\theta,\kappa)=\sup_{|\theta'-\theta|<\kappa}|\psi(w_t,\theta')-\psi(w_t,\theta)|$. Since $\Theta$ is compact and $$|\varphi_k(w_t,\theta')v_k(w_t,\theta')-\varphi_k(w_t,\theta)v_k(w_t,\theta)|\leq|v_k(w_t,\theta')-v_k(w_t,\theta)|<|x_{t,(m)}||\theta'-\theta|,$$
for $k=1,\ldots,K$, we have
\begin{eqnarray}
U(w_t,\theta,\kappa)<2K|x_{t,(m)}|^2|\theta|^2|\theta'-\theta|\leq2K|x_{t,(m)}|^2d^2\kappa.
\end{eqnarray}
Then, according to Assumption [A3], we have
\begin{eqnarray}
\mathbf{E}[U(w_t,\theta,\kappa)]<2Kd^2M_2\kappa,
\end{eqnarray}
\begin{eqnarray}
\mathbf{E}[U^2(w_t,\theta,\kappa)]<4K^2d^4M_4\kappa^2,
\end{eqnarray}
where $M_2=\int|x|^2g(y_t|x_t)h(x_t)d\upsilon_w$ and $M_4=\int|x|^4g(y_t|x_t)h(x_t)d\upsilon_w$. Now we have verified the condition [N3] in Theorem 3 of \cite{huber1967}, so we have
\begin{eqnarray}
\sqrt{n}\zeta(\tilde{\theta})+\frac{1}{\sqrt{n}}\sum_{t=1}^{n}\psi(w_t,\theta_0)=o_p(1).
\end{eqnarray}
Through Taylor expansion of $\sqrt{n}\zeta(\tilde{\theta})$ at $\theta_0$, we have
\begin{eqnarray}
\sqrt{n}(\tilde{\theta}-\theta_0)=-(\partial\zeta(\ddot{\theta})/\partial\theta)^{-1}\frac{1}{\sqrt{n}}\sum_{t=1}^{n}\psi(w_t,\theta_0)+o_p(1),
\end{eqnarray}
where $\ddot{\theta}$ is a ponit between $\tilde{\theta}$ and $\theta_0$, which converges to $\theta_0$ by the consistency of $\tilde{\theta}$. And by the continuity of $\partial\zeta(\theta)/\partial\theta$, we can learn that $\partial\zeta(\ddot{\theta})/\partial\theta$ also converges to $\Sigma$ in probability. Meanwhile, the following central limit theorem for stationary sequence $\{\psi(w_t,\theta_0)\}_{t>0}$ can be obtained from Assumptions [A1]-[A2]
\begin{eqnarray}
\frac{1}{\sqrt{n}}\sum_{t=1}^{n}\psi(w_t,\theta_0)\stackrel{d}\rightarrow N(\mathbf{0},\Omega),
\end{eqnarray}
as $n\rightarrow\infty$.
\end{proof}

\subsection{Proof of Corollary \ref{cor1} and Corollary \ref{cor2}}
It is obvious that Corollary \ref{cor1} holds because $\eta$ is a part of $\theta$. The proof of Corollary \ref{cor2} is also trivial.

\subsection{Proof of Theorem \ref{thm3}}
This proof is similar to the proof of Theorem 3 in \cite{xiao_conditional_2009}. Some related conclusion will also be used here.
\begin{proof}
Since the optimisation function (\ref{hatp}) is a least square loss which can be rewritten as Eq. (\ref{ols}), we write its first order derivative as
$$H_n(\phi,\eta)=\frac{1}{n}\sum_t(\sigma_{t}(\eta)-\phi^Tz_t(\eta))z_t(\eta),$$. Let
$$H(\phi,\eta)=\mathbf{E}\left[(\sigma_{t}(\eta)-\phi^Tz_t(\eta))z_t(\eta)\right],$$
$$\Gamma_1(\phi,\eta)=\frac{\partial H(\phi,\eta)}{\partial \phi}=\mathbf{E}[z_t(\eta)z_t^T(\eta)],$$
then
$$\Gamma_{10}\triangleq\Gamma_1(\phi_0,\eta_0)=\mathbf{E}[z_t(\eta_0)z_t^T(\eta_0)]=\mathbf{E}[z_tz_t^T]+o_p(n^{-1}).$$
Thus, $\Gamma_{10}$ is nonsingular under Assumption [A4], and there exists a constant $C$ such that $C\parallel\hat{\phi}-\phi_0\parallel\leq \parallel H(\hat{\phi},\eta_0)\parallel$. To verify the $\sqrt{n}$ consistency of $\hat{\phi}$, it is necessary to show that $\parallel H(\hat{\phi},\eta_0)\parallel=O_p(n^{-1/2})$. We can magnify it by the triangle inequality as
\begin{eqnarray*}
\parallel H(\hat{\phi},\eta_0)\parallel&\leq&\parallel H(\hat{\phi},\eta_0)-H(\hat{\phi},\tilde{\eta})\parallel+\parallel H(\hat{\phi},\tilde{\eta})-H(\phi_0,\eta_0)-H_n(\hat{\phi},\tilde{\eta})+H_n(\phi_0,\eta_0)\parallel\\
&&+\parallel H_n(\hat{\phi},\tilde{\eta})\parallel+\parallel H(\phi_0,\eta_0)\parallel+\parallel H_n(\phi_0,\eta_0)\parallel\\
&\triangleq& A_1+A_2+A_3+A_4+A_5.
\end{eqnarray*}
It is obvious that $A_4+A_5=O_p(n^{-1})$, since $\parallel H(\phi_0,\eta_0)\parallel=\parallel H_n(\phi_0,\eta_0)\parallel=O_p(b^m)=O_p(n^{-1})$ under Assumption [B2]. Denote the partial derivative of $H(\phi,\eta)$ to $\eta$ by
\begin{eqnarray*}
\Gamma_2(\phi,\eta)=\frac{\partial H(\phi,\eta)}{\partial \eta^T}=\mathbf{E}\left[z_t(\eta)\left(\frac{d\sigma_t(\eta)}{d\eta^T}-\phi^T\frac{d z_t(\eta)}{d\eta^T}\right)\right]-\mathbf{E}\left[\left(\sigma_{t}(\eta)-\phi^Tz_t(\eta)\right)\frac{dz_t(\eta)}{d\eta^T}\right],
\end{eqnarray*}
which is finite at $(\phi_0,\eta_0)$ since
\begin{eqnarray*}
\Gamma_2(\phi_0,\eta_0)&\approx&\mathbf{E}\left[z_t(\eta)\left(\frac{d\sigma_t(\eta)}{d\eta^T}-\phi^T\frac{d z_t(\eta)}{d\eta^T}\right)\right]|_{\phi=\phi_0,\eta=\eta_0}\\
&=&\mathbf{E}\left[z_t(\eta)\left(\frac{d\sigma_t(\eta)}{d\eta^T}-\sum_{j=1}^{p}\beta_j\frac{d\sigma_{t-j}(\eta)}{d\eta^T}\right)\right]|_{\phi=\phi_0,\eta=\eta_0}
\end{eqnarray*}
and we denote $\Gamma_2(\phi_0,\eta_0)$ by $\Gamma_{20}$. Thus, $A_1$ can be bounded by
\begin{eqnarray*}
A_1&\leq&\parallel H(\hat{\phi},\tilde{\eta})-H(\hat{\phi},\eta_0)-\Gamma_2(\hat{\phi},\eta_0)(\tilde{\eta}-\eta_0)\parallel\\
&&+\parallel\Gamma_2(\hat{\phi},\eta_0)(\tilde{\eta}-\eta_0)-\Gamma_2(\phi_0,\eta_0)(\tilde{\eta}-\eta_0)\parallel+\parallel\Gamma_2(\phi_0,\eta_0)(\tilde{\eta}-\eta_0)\parallel\\
&=&O_p(\parallel\tilde{\eta}-\eta_0\parallel^2)+O_p(\parallel\hat{\phi}-\phi_0\parallel\parallel\tilde{\eta}-\eta_0\parallel)+\parallel\Gamma_2(\phi_0,\eta_0)(\tilde{\eta}-\eta_0)\parallel\\
&=&O_p(\parallel\tilde{\eta}-\eta_0\parallel)=O_p(n^{-1/2}).
\end{eqnarray*}
Additionally, it deduces that $\parallel H(\hat{\phi},\eta_0)\parallel\leq\parallel H(\hat{\phi},\tilde{\eta})\parallel(1+o_p(1))$.

For the term $A_2$, since $H(\phi,\eta)$ has stochastic equicontinuity under Assumptions [A1]-[A3], we can also obtain the following result using the Lemma 4.2 of \cite{chen_nonparametric_2008}
\begin{eqnarray*}
A_2&=&\parallel H(\hat{\phi},\tilde{\eta})-H(\phi_0,\eta_0)-H_n(\hat{\phi},\tilde{\eta})+H_n(\phi_0,\eta_0)\parallel\\
&\leq&\left(\parallel H(\hat{\phi},\tilde{\eta})\parallel+\parallel H_n(\hat{\phi},\tilde{\eta})\parallel\right)\times o_p(1)\\
&\leq&\left(\parallel H(\hat{\phi},\eta_0)\parallel(1+o_p(1))+\parallel H_n(\hat{\phi},\tilde{\eta})\parallel\right)\times o_p(1).
\end{eqnarray*}
Now, we have
\begin{eqnarray*}
\parallel H(\hat{\phi},\eta_0)\parallel\leq\left(\parallel H(\hat{\phi},\eta_0)\parallel(1+o_p(1))+\parallel H_n(\hat{\phi},\tilde{\eta})\parallel\right)\times o_p(1)+\parallel H_n(\hat{\phi},\tilde{\eta})\parallel+O_p(n^{-1/2}),
\end{eqnarray*}
which implies that
\begin{eqnarray*}
\parallel H(\hat{\phi},\eta_0)\parallel\times(1-o_p(1))\leq\parallel H_n(\hat{\phi},\tilde{\eta})\parallel\times(1+o_p(1))+O_p(n^{-1/2}).
\end{eqnarray*}
Note that $\parallel H_n(\phi_0,\tilde{\eta})\parallel=O_p(n^{-1/2})$, so we have
\begin{eqnarray*}
\parallel H_n(\hat{\phi},\tilde{\eta})\parallel=\min_{\phi}\parallel H_n(\phi,\tilde{\eta})\parallel\leq\parallel H_n(\phi_0,\tilde{\eta})\parallel=O_p(n^{-1/2}).
\end{eqnarray*}
Thus, $\parallel H(\hat{\phi},\eta_0)\parallel\leq O_p(n^{-1/2})$ and the $\sqrt{n}$ consistency of $\hat{\phi}$ is obtained.

To prove the asymptotic normality, we define
\begin{eqnarray*}
L_n(\phi,\tilde{\eta})=H_n(\phi_0,\eta_0)+H(\phi,\eta_0)+\Gamma_2(\phi_0,\eta_0)(\tilde{\eta}-\eta_0).
\end{eqnarray*}
Based on the $\sqrt{n}$ consistency of $\hat{\phi}$, we have
\begin{eqnarray*}
\parallel H_n(\hat{\phi},\tilde{\eta})-L_n(\hat{\phi},\tilde{\eta})\parallel&\leq&\parallel H_n(\hat{\phi},\tilde{\eta})-H(\hat{\phi},\tilde{\eta})-H_n(\phi_0,\eta_0)+H(\phi_0,\eta_0)\parallel\\
&&+\parallel H(\phi_0,\eta_0)\parallel+\parallel H(\hat{\phi},\eta_0)-H(\phi_0,\eta_0)-\Gamma_{10}(\hat{\phi}-\phi_0)\parallel\\
&&+\parallel H(\hat{\phi},\tilde{\eta})-H(\hat{\phi},\eta_0)-\Gamma_{2}(\hat{\phi},\eta_0)(\tilde{\eta}-\eta_0)\parallel\\
&&+\parallel \Gamma_{2}(\hat{\phi},\eta_0)(\tilde{\eta}-\eta_0)-\Gamma_{2}(\phi_0,\eta_0)(\tilde{\eta}-\eta_0)\parallel\\
&=&o_p(n^{-1/2}).
\end{eqnarray*}
Therefore,
\begin{eqnarray*}
L_n(\hat{\phi},\tilde{\eta})&=&\Gamma_{10}(\hat{\phi}-\phi_{0})+H_n(\phi_0,\eta_0)+H_n(\phi_0,\eta_0)+\Gamma_{2}(\phi_0,\eta_0)(\tilde{\eta}-\eta_0)\\
&=&H_n(\hat{\phi},\tilde{\eta})+o_p(n^{-1/2})=o_p(n^{-1/2}).
\end{eqnarray*}
Recall the fact that $\parallel H(\phi_0,\eta_0)\parallel=\parallel H_n(\phi_0,\eta_0)\parallel)=O_p(n^{-1})$, and then we have
\begin{eqnarray*}
\sqrt{n}(\hat{\phi}-\phi_0)=-\Gamma_{10}^{-1}\Gamma_{20}(\tilde{\eta}-\eta_0)+o_p(1).
\end{eqnarray*}
which deduces the asymptotic normality of $\hat{\phi}$ in Theorem \ref{thm3}.
\end{proof}

\subsection{Proof of Corollary \ref{cor3}}
It is trivial to deduce Corollary \ref{cor3} from the result of Theorem \ref{thm3}, so we omit it.

\subsection{Proof of Lemma \ref{empcon}}
\begin{proof}
This lemma actually is a particular case of Theorem 1 in \cite{wang_conditional_2016}. Denote
\begin{eqnarray*}
Y_t=\bar{G}(\phi_0,\varepsilon_t,z_t)\triangleq\left(\beta_{0}+\sum_{i=1}^{p}\beta_{i}\sigma_{t-i}+\sum_{j=1}^{q}\gamma_{j}|Y_{t-j}|\right)\varepsilon_t,
\end{eqnarray*}
which is strictly increasing in $\varepsilon_t$, and
\begin{eqnarray*}
\varepsilon_t=\bar{H}(\phi_0,Y_t,z_t)\triangleq\frac{Y_t}{\beta_{0}+\sum_{i=1}^{p}\beta_{i}\sigma_{t-i}+\sum_{j=1}^{q}\gamma_{j}|Y_{t-j}|}.
\end{eqnarray*}
In fact, the estimated innovation is obtained from $\hat{\varepsilon}_t=\bar{H}(\hat{\phi},Y_t,z_t)$. Since the $\sqrt{n}$ consistency of $\hat{\phi}$ has been proved, we only need to check the assumption 3 and the assumption 4 in \cite{wang_conditional_2016}.

Assumption 3(\rmnum{1}) is obvious under Assumption [C1], and assumption 3(\rmnum{2}) can be obtained from the result about mixing properties of GARCH precess in \cite{boussama_ergodicite_1998}; see also in \cite{Lindner2009}. Assumption 4(\rmnum{1}), 4(\rmnum{2}) and 4(\rmnum{4}) are hold since $\bar{G}$ is truly continuously differentiable on $\phi$ and $\varepsilon$, and Assumption [C2] ensures that $f_\varepsilon$ is bounded. Let $\dot{\bar{H}}(\phi,Y_t,z_t)$ be the partial derivative with respect to $\phi$. We have
\begin{eqnarray*}
\sup_{|v|<\delta}|\dot{\bar{H}}(\phi_0+v,Y_t,z_t)|=\frac{|Y_t| z_t}{[(\phi_0+v)^Tz_t]^2}\leq\frac{|Y_t| z_t}{c_{\beta}}
\end{eqnarray*}
for some constant $c_{\beta}>0$, which implies that assumption 4(\rmnum{3}) is hold.

Thus, based on the result of Theorem 1 in \cite{wang_conditional_2016}, we have
\begin{eqnarray}\label{a10}
\sup_{|x|<C}\left|\hat{F}_n(x)-\frac{1}{n}\sum_{i=1}^{n}\mathbf{1}_{\varepsilon_i<z}-f_\varepsilon(x)\mathbf{E}\left[\frac{z_t\varepsilon_t}{\phi_0^T z_t}\right](\hat{\phi}-\phi_0)\right|\leq o_p(n^{-1/2}),
\end{eqnarray}
for any given $C>0$. Moreover,
\begin{eqnarray*}
\sup_{|x|<C}|\hat{F}_n(x)-F_\varepsilon(x)|\leq O_p(n^{-1/2}),
\end{eqnarray*}
since the $\sqrt{n}$ convergence rate of the empirical distribution function $\hat{F}_n(x)$ and the $\sqrt{n}$ consistency of $\hat{\phi}$.
\end{proof}

\subsection{Proof of Theorem \ref{th4}}
The proof of Theorem \ref{th4} follows a same manner as the proof of Theorem 1 of \cite{peng_empirical_2015}, so we need some notations and lemmas first. Similarly as in section 3.2, we denote $(\mu_{\tau}(\varepsilon),\tau)$ by $(\mu_0,\tau_0)$.
\begin{lemapp}\label{lemma:Y1 and Y2}
Denote $\Delta_{\mu}=\mu-\mu_0$ and $\Delta_{\tau}=\tau-\tau_0$. Under the assumptions of Theorem \ref{th4}, when $\Delta\triangleq\parallel(\mu,\tau)-(\mu_0,\tau_0)\parallel$ converges to $0$ as $n\rightarrow\infty$, we have
\begin{eqnarray*}
\frac{1}{n}\sum_{i=1}^{n}W_{i1}(\mu,\tau)&=&
\frac{1}{n}\sum_{i=1}^{n}W_{i1}(\mu_0,\tau_0)-\left(F_\varepsilon(\mu_0)+\frac{\tau_0}{1-2\tau_0}\right)
\Delta_{\mu}-\frac{\mu_0}{(1-2\tau_0)^2}\Delta_{\tau}+o_p(\Delta),\\
\frac{1}{n}\sum_{i=1}^{n}W_{i2}(\mu,\tau)&=&
\frac{1}{n}\sum_{i=1}^{n}W_{i2}(\mu_0,\tau_0)+f_\varepsilon(\mu_0)\Delta_{\mu}+o_p(\Delta)+o_p(n^{-1/3}).
\end{eqnarray*}
\end{lemapp}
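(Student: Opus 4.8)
The plan is to treat the two coordinates of $W_i$ separately and, within $W_{i1}$, to split the estimating function into a smooth piece and an asymmetric-squares piece, expanding each around $(\mu_0,\tau_0)$. I write $W_{i1}(\mu,\tau)=\phi_\mu(\hat\varepsilon_i)+c(\tau)(\hat\varepsilon_i-\mu)$, where $\phi_\mu(x)=(x-\mu)I(x<\mu)$ and $c(\tau)=\tau/(1-2\tau)$, so that $c'(\tau)=1/(1-2\tau)^2$. For the smooth piece, averaging gives $c(\tau)(\bar\varepsilon_n-\mu)$ with $\bar\varepsilon_n=n^{-1}\sum_i\hat\varepsilon_i$; since $\mathbf{E}(\varepsilon)=0$ and $\hat\sigma_t$ is consistent (Corollary \ref{cor3}), I have $\bar\varepsilon_n=o_p(1)$. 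A first-order expansion of $c(\tau)(\bar\varepsilon_n-\mu)-c(\tau_0)(\bar\varepsilon_n-\mu_0)$ then produces the cross term $c'(\tau_0)\Delta_\tau\,\bar\varepsilon_n=o_p(\Delta)$ together with $-c(\tau_0)\Delta_\mu-c'(\tau_0)\mu_0\Delta_\tau$, which are precisely $-\frac{\tau_0}{1-2\tau_0}\Delta_\mu-\frac{\mu_0}{(1-2\tau_0)^2}\Delta_\tau$.

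The harder piece is the asymmetric part $n^{-1}\sum_i[\phi_\mu(\hat\varepsilon_i)-\phi_{\mu_0}(\hat\varepsilon_i)]$, since $\phi_\mu$ is not differentiable in $\mu$. My plan is to use the exact pointwise identity (taking $\mu>\mu_0$; the case $\mu<\mu_0$ is symmetric) $\phi_\mu(x)-\phi_{\mu_0}(x)=-\Delta_\mu I(x<\mu_0)+(x-\mu)I(\mu_0\le x<\mu)$. The first term averages to $-\Delta_\mu\hat F_n(\mu_0)$, and Lemma \ref{empcon} lets me replace $\hat F_n(\mu_0)$ by $F_\varepsilon(\mu_0)+O_p(n^{-1/2})$, yielding the main contribution $-F_\varepsilon(\mu_0)\Delta_\mu+o_p(\Delta)$. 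On the event $\mu_0\le\hat\varepsilon_i<\mu$ the integrand is bounded by $|\Delta_\mu|$, so the second term is at most $|\Delta_\mu|[\hat F_n(\mu)-\hat F_n(\mu_0)]$; using Lemma \ref{empcon} again to show $\hat F_n(\mu)-\hat F_n(\mu_0)=O_p(\Delta)+o_p(1)$, this is $o_p(\Delta)$. Adding the two pieces gives the coefficient $-\bigl(F_\varepsilon(\mu_0)+\frac{\tau_0}{1-2\tau_0}\bigr)$ on $\Delta_\mu$, completing the first display.

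For $W_{i2}$, note that it does not depend on $\tau$ and that $n^{-1}\sum_i W_{i2}(\mu,\tau)=\hat F_n(\mu)-\alpha$, so the whole content is the increment $\hat F_n(\mu)-\hat F_n(\mu_0)$. I would substitute the uniform expansion of $\hat F_n$ established within the proof of Lemma \ref{empcon}, which writes $\hat F_n$ as the true-innovation empirical distribution plus a drift term $f_\varepsilon(\cdot)\,\mathbf{E}[z_t\varepsilon_t/\phi_0^Tz_t](\hat\phi-\phi_0)$ plus $o_p(n^{-1/2})$ uniformly on compacts. The drift contributes $[f_\varepsilon(\mu)-f_\varepsilon(\mu_0)](\hat\phi-\phi_0)=o_p(n^{-1/2})$, and the deterministic part of the empirical increment is $F_\varepsilon(\mu)-F_\varepsilon(\mu_0)=f_\varepsilon(\mu_0)\Delta_\mu+o(\Delta_\mu)$ by differentiability of $F_\varepsilon$ (Assumption [C2]). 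The centered empirical increment $n^{-1}\sum_i[I(\varepsilon_i<\mu)-I(\varepsilon_i<\mu_0)-F_\varepsilon(\mu)+F_\varepsilon(\mu_0)]$ has variance of order $|\Delta_\mu|/n$, so by a modulus-of-continuity bound it is $O_p(\sqrt{|\Delta_\mu|/n})=o_p(n^{-1/3})$ as $\Delta\to0$. Collecting terms yields $f_\varepsilon(\mu_0)\Delta_\mu+o_p(\Delta)+o_p(n^{-1/3})$.

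The main obstacle throughout is that both estimating functions contain indicator terms, so a direct Taylor expansion is impossible; the resolution is a stochastic-equicontinuity argument in which the increments of the empirical process over the shrinking interval $[\mu_0,\mu)$ are controlled. What makes this nonstandard is that the averages are over the \emph{estimated} innovations $\hat\varepsilon_i$ rather than the true $\varepsilon_i$, and this is exactly why Lemma \ref{empcon} (together with its internal expansion) is indispensable: it guarantees that $\hat F_n$ inherits the $\sqrt{n}$-oscillation control of an ordinary i.i.d. empirical distribution, and the $o_p(n^{-1/3})$ remainder in the second display is precisely the price of bounding the indicator empirical process over an interval of vanishing width.
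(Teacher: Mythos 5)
Your proposal is correct and follows essentially the same route as the paper: expand around $(\mu_0,\tau_0)$, pass everything to the empirical distribution $\hat{F}_n$ of the estimated innovations, linearize the smooth $\tau/(1-2\tau)$ part (using that the sample mean of $\hat{\varepsilon}_i$ is $o_p(1)$, exactly as the paper uses $\int x\,dF_\varepsilon(x)=0$), and control the indicator increments over the shrinking interval $[\mu_0,\mu)$ via Lemma \ref{empcon}. Your pointwise identity for $\phi_\mu-\phi_{\mu_0}$ is the same decomposition the paper reaches through its integral splitting and mean-value-theorem step, and your explicit variance/modulus-of-continuity bound for the centered empirical increment in the $W_{i2}$ expansion supplies the justification of the $o_p(n^{-1/3})$ remainder that the paper states without detail.
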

\begin{proof}We expand $\sum_{i=1}^{n}W_{i1}(m,\theta)$ as follows:
\begin{eqnarray*}
\frac{1}{n}\sum_{i=1}^{n}W_{i1}(\mu,\tau)&=&\int_{-\infty}^{\mu}(x-\mu)d\hat{F}_n(x)+\frac{\tau}{1-2\tau}\int_{-\infty}^{+\infty}xd\hat{F}_n(x)-\frac{\tau}{1-2\tau}\mu\\
&=&\int_{-\infty}^{\mu_0}(x-\mu_0)d\hat{F}_n(x)+\frac{\tau_0}{1-2\tau_0}\int_{-\infty}^{+\infty}xd\hat{F}_n(x)
-\frac{\tau_0}{1-2\tau_0}\mu_0+\int_{\mu_0}^{\mu}(x-\mu)d\hat{F}_n(x)\\
&&+\int_{-\infty}^{\mu_0}(\mu_0-\mu)d\hat{F}_n(x)+(\frac{\tau}{1-2\tau}-\frac{\tau_0}{1-2\tau_0})\int_{-\infty}^{+\infty}xd\hat{F}_n(x)+(\frac{\tau_0}{1-2\tau_0}\mu_0-\frac{\tau}{1-2\tau}\mu)\\
&=&\frac{1}{n}\sum_{i=1}^{n}W_{i1}(\mu_0,\tau_0)+\int_{\mu_0}^{\mu}(x-\mu)d\hat{F}_n(x)+\int_{-\infty}^{\mu_0}(\mu_0-\mu)d\hat{F}_n(x)\\
&&+\frac{\Delta_{\tau}}{(1-2\tau_0)^2}\int_{-\infty}^{+\infty}xd\hat{F}_n(x)+\frac{\mu_0\Delta_{\tau}}{(1-2\tau_0)^2}+\frac{\tau_0\Delta_{\mu}}{1-2\tau_0}+o_p(\Delta)\\
&=&\frac{1}{n}\sum_{i=1}^{n}W_{i1}(\mu_0,\tau_0)+\int_{\mu_0}^{\mu}(x-\mu)dF_\varepsilon(x)+\int_{\mu_0}^{\mu}(x-\mu)d(\hat{F}_n(x)-F_\varepsilon(x))\\
&&+\int_{-\infty}^{\mu_0}(\mu_0-\mu)dF_\varepsilon(x)+\int_{-\infty}^{\mu_0}(\mu_0-\mu)d(\hat{F}_n(x)-F_\varepsilon(x))\\
&&+\frac{\Delta_{\tau}}{(1-2\tau_0)^2}\int_{-\infty}^{+\infty}xdF_\varepsilon(x)+\frac{\Delta_{\tau}}{(1-2\tau_0)^2}\int_{-\infty}^{+\infty}xd(\hat{F}_n(x)-F_\varepsilon(x))\\
&&+\frac{\mu_0\Delta_{\tau}}{(1-2\tau_0)^2}+\frac{\tau_0\Delta_{\mu}}{1-2\tau_0}+o_p(\Delta)\\
&=&\frac{1}{n}\sum_{i=1}^{n}W_{i1}(\mu_0,\tau_0)+\int_{\mu_0}^{\mu}(x-\mu)dF_\varepsilon(x)+\int_{\mu_0}^{\mu}(x-\mu)d(\hat{F}_n(x)-F_\varepsilon(x))\\
&&-F(\mu_0)\Delta_{\mu}-(\hat{F}_n(\mu_0)-F_\varepsilon(\mu_0))\Delta_{\mu}-\frac{\mu_0\Delta_{\tau}}{(1-2\tau_0)^2}-\frac{\tau_0\Delta_{\mu}}{1-2\tau_0}+o_p(\Delta)
\end{eqnarray*}
According to the mean value theorem of integrals, there exist $\mu_1$, $\mu_2$ between $\mu$ and $\mu_0$ such that
$$\int_{\mu_0}^{\mu}(x-\mu)dF_\varepsilon(x)=(\mu_1-\mu_0)(F_\varepsilon(\mu)-F_\varepsilon(\mu_0)),$$
$$\int_{\mu_0}^{\mu}(x-\mu)d(\hat{F}_n(x)-F_\varepsilon(x))=(\mu_2-\mu_0)[(\hat{F}_n(\mu)-\hat{F}_n(\mu_0))-(F_\varepsilon(\mu)-F_\varepsilon(\mu_0))].$$

Based on the fact that $F$ has bounded derivative and the conclusion of Lemma \ref{empcon}, we have
$$\int_{\mu_0}^{\mu}(x-\mu)dF_\varepsilon(x)=o_p(\Delta),$$
$$\int_{\mu_0}^{\mu}(x-\mu)d(\hat{F}_n(x)-F_\varepsilon(x))=o_p(\Delta).$$
Therefore, we can rewrite $\frac{1}{n}\sum_{i=1}^{n}W_{i1}(\mu,\tau)$ as:
\begin{eqnarray*}
\frac{1}{n}\sum_{i=1}^{n}W_{i1}(\mu,\tau)=\frac{1}{n}\sum_{i=1}^{n}W_{i1}(\mu_0,\tau_0)-\left(F_\varepsilon(\mu_0)+\frac{\tau_0}{1-2\tau_0}\right)
\Delta_{\mu}-\frac{\mu_0}{(1-2\tau_0)^2}\Delta_{\tau}+o_p(\Delta)
\end{eqnarray*}
We can obtain the following expansion of $\sum_{i=1}^{n}W_{i2}(\mu,\tau)$ in the same way,
\begin{eqnarray*}
\frac{1}{n}\sum_{i=1}^{n}W_{i2}(\mu,\tau)&=&\frac{1}{n}\sum_{i=1}^{n}W_{i2}(\mu_0,\tau_0)+(F_\varepsilon(m)-F_\varepsilon(m_\alpha))+(\hat{F}_n(\mu)-F_\varepsilon(\mu))-(\hat{F}_n(\mu_0)-F_\varepsilon(\mu_0))\\
&=&\frac{1}{n}\sum_{i=1}^{n}W_{i2}(\mu_0,\tau_0)+f_\varepsilon(\mu_0)\Delta_{\mu}+o_p(\Delta_{m}^2)+o_p(n^{-1/3}).
\end{eqnarray*}
\end{proof}

\begin{lemapp}\label{lemma:normality convergence}
Under the assumptions in Theorem \ref{th4}, as $n$ approach infinity, we have
\begin{eqnarray}
\frac{1}{\sqrt{n}}\sum_{i=1}^{n}W_{i}(\mu_0,\tau_0)\stackrel{d}{\rightarrow}N(\mathbf{0},\Sigma_0),
\end{eqnarray}
\begin{eqnarray}
\frac{1}{\sqrt{n}}\sum_{i=1}^{n}W_{i}(\mu_0,\tau_0)=O_p((\log\log n)^{1/2}),
\end{eqnarray}
\begin{eqnarray}
\frac{1}{n}\sum_{i=1}^{n}W_{i}(\mu_0,\tau_0)W_{i}^T(\mu_0,\tau_0)\stackrel{p}{\rightarrow}\Sigma_0.
\end{eqnarray}
and the matrix $\Sigma_0$ is positive definite.
\end{lemapp}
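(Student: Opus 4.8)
The plan is to reduce each of the three assertions to a classical limit theorem for the \emph{true} innovation sequence $\{\varepsilon_i\}$ and then transfer the conclusion to the estimated sequence $\{\hat\varepsilon_i\}$ by means of Lemma \ref{empcon}. Write $F_n^{\varepsilon}(x)=\frac1n\sum_{i=1}^n\mathbf{1}_{\varepsilon_i<x}$ for the empirical distribution of the true innovations, and let $W_i^{\varepsilon}(\mu_0,\tau_0)$ denote $W_i(\mu_0,\tau_0)$ with $\hat\varepsilon_i$ replaced by $\varepsilon_i$. Because $(\mu_0,\tau_0)$ solves the two population estimating equations (the innovation form of Eq. (\ref{theoel})), we have $\mathbf E[W_i^{\varepsilon}(\mu_0,\tau_0)]=\mathbf 0$; moreover under [C1] the $W_i^{\varepsilon}(\mu_0,\tau_0)$ are i.i.d., and [C2] ($\mathbf E[\varepsilon^2]<\infty$) guarantees $\mathbf E\parallel W_i^{\varepsilon}(\mu_0,\tau_0)\parallel^2<\infty$ with covariance exactly $\Sigma_0$.

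For the oracle sequence the three statements are standard: the multivariate central limit theorem gives $\frac1{\sqrt n}\sum_i W_i^{\varepsilon}(\mu_0,\tau_0)\stackrel{d}{\rightarrow}N(\mathbf 0,\Sigma_0)$; the law of the iterated logarithm applied componentwise gives $\frac1{\sqrt n}\sum_i W_i^{\varepsilon}(\mu_0,\tau_0)=O_p((\log\log n)^{1/2})$; and the strong law of large numbers gives $\frac1n\sum_i W_i^{\varepsilon}(\mu_0,\tau_0)W_i^{\varepsilon}(\mu_0,\tau_0)^T\stackrel{p}{\rightarrow}\Sigma_0$. Each of these uses only [C1]--[C2].

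The substantive step is to show that passing from $\varepsilon_i$ to $\hat\varepsilon_i$ is asymptotically negligible. I would express the averages of $W_{i1}$ and $W_{i2}$ as functionals of $\hat F_n$, namely $\frac1n\sum_i W_{i2}(\mu_0,\tau_0)=\hat F_n(\mu_0)-\alpha$ and $\frac1n\sum_i W_{i1}(\mu_0,\tau_0)=\int_{-\infty}^{\mu_0}(x-\mu_0)\,d\hat F_n(x)+\frac{\tau_0}{1-2\tau_0}\int x\,d\hat F_n(x)-\frac{\tau_0}{1-2\tau_0}\mu_0$, and then invoke the Bahadur-type expansion (\ref{a10}) from the proof of Lemma \ref{empcon}. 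The decisive observation is that the parameter-estimation correction there vanishes: since $z_t/(\phi_0^Tz_t)=z_t/\sigma_t$ is $\mathcal F_{t-1}$-measurable while $\varepsilon_t$ is independent of $\mathcal F_{t-1}$ with $\mathbf E[\varepsilon_t]=0$ under [C2], conditioning on $\mathcal F_{t-1}$ yields $\mathbf E\big[\frac{z_t\varepsilon_t}{\phi_0^Tz_t}\big]=\mathbf E\big[\frac{z_t}{\sigma_t}\big]\mathbf E[\varepsilon_t]=\mathbf 0$. Hence $\hat F_n(x)=F_n^{\varepsilon}(x)+o_p(n^{-1/2})$ uniformly on compacts, so after multiplying by $\sqrt n$ the difference $\frac1{\sqrt n}\sum_i\big[W_i(\mu_0,\tau_0)-W_i^{\varepsilon}(\mu_0,\tau_0)\big]=o_p(1)$, and the first two assertions carry over; the covariance consistency transfers similarly, using $\hat F_n(\mu_0)\rightarrow F_\varepsilon(\mu_0)=\alpha$ for the $W_{i2}$ block and the $\sqrt n$-consistency of $\hat\phi$ for the terms quadratic in $\hat\varepsilon_i$.

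The hardest part will be controlling the two contributions that are not confined to a compact set, namely the sample-mean term $\frac1n\sum_i\hat\varepsilon_i=\int x\,d\hat F_n(x)$ and the lower partial integral $\int_{-\infty}^{\mu_0}(x-\mu_0)\,d\hat F_n(x)$, because Lemma \ref{empcon} is only uniform on $|x|\le C$. I would handle these by a truncation argument: integrate (\ref{a10}) by parts, split at $\pm C$, bound the central part by the uniform $O_p(n^{-1/2})$ rate, and dominate the tails using $\mathbf E[\varepsilon^2]<\infty$ from [C2] together with the $\sqrt n$-consistency of $\hat\phi$, letting $C\rightarrow\infty$ slowly. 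Finally, the positive definiteness of $\Sigma_0$ follows from $\sigma_2^2=\alpha(1-\alpha)>0$ and a strict Cauchy--Schwarz inequality: $W_{i1}^{\varepsilon}$ contains the absolutely continuous component $\frac{\tau_0}{1-2\tau_0}\varepsilon_i$ and is therefore not an almost-sure linear combination of the two-valued $W_{i2}^{\varepsilon}=I(\varepsilon_i<\mu_0)-\alpha$, so $\det\Sigma_0=\sigma_1^2\sigma_2^2-\sigma_{12}^2>0$.
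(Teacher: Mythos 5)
Your proof takes a genuinely different, and considerably more careful, route than the paper's. The paper's own proof of this lemma is essentially one sentence: it applies the multivariate CLT, the law of the iterated logarithm and the weak law of large numbers \emph{directly} to $W_i(\mu_0,\tau_0)$, on the grounds that $\mathbf{E}[W_i(\mu_0,\tau_0)]=\mathbf{0}$ and $\mathbf{Var}[W_i(\mu_0,\tau_0)]=\Sigma_0$, and then proves positive definiteness of $\Sigma_0$ by the same degeneracy argument you give (your Cauchy--Schwarz formulation, using that $W_{i1}$ has an absolutely continuous component while $W_{i2}$ is two-valued, is a cleaner version of the paper's contradiction step). What the paper's proof silently ignores is exactly the point you put at the centre: the $W_i$ are built from the estimated residuals $\hat\varepsilon_i$, which are neither independent nor identically distributed, since every $\hat\varepsilon_i$ depends on the whole sample through $\hat\phi$. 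Your oracle-plus-transfer decomposition (classical limit theorems for the true-innovation quantities $W_i^{\varepsilon}$, then Lemma \ref{empcon} to control the substitution $\varepsilon_i\mapsto\hat\varepsilon_i$) is the argument the paper would need in order to be rigorous, and your truncation step addresses a real deficiency --- Lemma \ref{empcon} is uniform only on compacts, while $W_{i1}$ involves integrals over all of $\mathbb{R}$ --- that the paper leaves untouched here and again in the proof of Lemma \ref{lemma:Y1 and Y2}.

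The caveat concerns your ``decisive observation'' that the drift in Eq. (\ref{a10}) vanishes because $\mathbf{E}\bigl[z_t\varepsilon_t/(\phi_0^Tz_t)\bigr]=\mathbf{0}$. That computation is correct for the formula as the paper writes it, and you are entitled to invoke the paper's own display; but note what it implies: this same expectation is the matrix $\Sigma_2$ in the proof of Theorem \ref{th5}, and if it were zero, the expansion there would give $\sqrt{n}(\hat{\mu}-\mu_0,\hat{\tau}-\tau_0)^T=o_p(1)$, contradicting the nondegenerate limit asserted in Theorem \ref{th4}. The resolution is that Eq. (\ref{a10}) is almost certainly misstated: for a pure scale model the residual empirical process has drift of the form $x f_\varepsilon(x)\,\mathbf{E}\bigl[z_t/(\phi_0^Tz_t)\bigr]^T(\hat\phi-\phi_0)$, because the estimation error enters multiplicatively, and this term does \emph{not} vanish (except at $x=0$). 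Under that corrected drift your transfer step fails: $\frac{1}{\sqrt{n}}\sum_i\bigl[W_i(\mu_0,\tau_0)-W_i^{\varepsilon}(\mu_0,\tau_0)\bigr]$ is then $O_p(1)$ rather than $o_p(1)$ and correlated with the oracle sum, so the limiting covariance of $\frac{1}{\sqrt{n}}\sum_i W_i(\mu_0,\tau_0)$ is no longer $\Sigma_0$. In short: your argument is internally consistent with the paper's stated lemmas and is more honest than the paper's proof about where the difficulty lies, but the step that makes it work rests on a display of the paper that cannot be right as written; with the correct expansion, both your proof and the lemma itself (as stated, with $\hat\varepsilon_i$-based $W_i$ and covariance $\Sigma_0$) would require an estimation-effect correction.
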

\begin{proof}
These three formulas can be deviated from the central limit theorem, the law of iterated logarithmic and the weak law of large numbers because $\mathbf{E}[W_{i}(\mu_0,\tau_0)]=\mathbf{0}$ and $\mathbf{Var}[W_{i}(\mu_0,\tau_0)]=\Sigma_0$. To demonstrate $\Sigma_0$ is positive definite, we need to show that $VaR\left((a\ b)W_{i}(\mu_0,\tau_0)\right)>0$ for any $a^2+b^2>0$. If there exist constants $a$, $b$ such that $a^2+b^2>0$ and $VaR\left((a\ b)W_{i}(\mu_0,\tau_0)\right)=0$, then we have $$a\left[(\hat{\varepsilon}_i-\mu_0)I(X<\mu_0)+\frac{\tau_0}{1-2\tau_0}\hat{\varepsilon}_i-\frac{\tau_0}{1-2\tau_0}\mu_0\right]=b\left[I(\hat{\varepsilon}_i<\mu_0)\right]\quad a.s.$$
This implies $b\equiv a(X-\mu_0)$, which leads to a contradiction to that $a^2+b^2>0$. Hence, we can learn that $\Sigma_0$ is positive definite.
\end{proof}

\begin{lemapp} \label{lemma:interior point}
Under the assumptions of Theorem \ref{th4}, $(\hat{\mu}_{\tau_0},\hat{\tau}_0)$, the solution of the formula (\ref{hmutau}), is an interior point of the ball $\{(\mu,\tau):\Delta\le n^{-1/3}\}$.
\end{lemapp}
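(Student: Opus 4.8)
The plan is to run the standard empirical-likelihood ``interior minimiser'' argument (as in \cite{qin_empirical_1994} and \cite{peng_empirical_2015}): I will show that on the sphere $\partial B=\{(\mu,\tau):\Delta=n^{-1/3}\}$ the objective $l(\mu,\tau)$ is of order at least $n^{1/3}$, whereas at the centre $l(\mu_0,\tau_0)$ is only $O_p(\log\log n)$. Since $W_i(\mu,\tau)$ is continuous in $(\mu,\tau)$, $l$ is continuous on the compact ball $B=\{\Delta\le n^{-1/3}\}$ and attains its minimum there; once every boundary value eventually dominates the central value, the minimiser $(\hat\mu_{\tau_0},\hat\tau_0)$ cannot lie on $\partial B$ and must be interior.

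First I would produce a lower bound for $l$ on $\partial B$. Writing $\bar W(\mu,\tau)=\frac1n\sum_{i=1}^n W_i(\mu,\tau)$ and $S_n=\frac1n\sum_i W_iW_i^T$, analysis of the multiplier equation $\sum_i W_i/(1+\lambda^TW_i)=0$ gives $\|\lambda(\mu,\tau)\|=O_p(\|\bar W\|)$ uniformly on $B$ together with the lower bound $l(\mu,\tau)\ge c\,n\,\|\bar W(\mu,\tau)\|^2(1+o_p(1))$ for some $c>0$, provided $\max_{i\le n}\|W_i\|$ is suitably small relative to $n^{1/3}$. It then suffices to bound $\|\bar W\|$ below on $\partial B$. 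Parametrising $(\mu,\tau)=(\mu_0,\tau_0)+n^{-1/3}\xi$ with $\|\xi\|=1$, Lemma \ref{lemma:Y1 and Y2} shows that the deterministic linear term dominates,
\begin{eqnarray*}
\bar W(\mu,\tau)=\Sigma_1\,(n^{-1/3}\xi)+\bar W(\mu_0,\tau_0)+o_p(n^{-1/3}),
\end{eqnarray*}
where $\Sigma_1$ is the matrix of Theorem \ref{th4}, already assumed nonsingular there (its determinant equals $\mu_{\tau}f_\varepsilon(\mu_{\tau})/(1-2\tau)^2\neq0$ under [C2]). Because $\bar W(\mu_0,\tau_0)=O_p(n^{-1/2}(\log\log n)^{1/2})=o_p(n^{-1/3})$ by Lemma \ref{lemma:normality convergence}, and $\|\Sigma_1\xi\|$ is bounded below by a positive constant uniformly over $\|\xi\|=1$, I obtain $\inf_{\partial B}\|\bar W\|\ge c'n^{-1/3}$, hence $\inf_{\partial B}l\ge c''n^{1/3}$ with probability tending to one.

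Next I would bound $l$ at the centre. By Lemma \ref{lemma:normality convergence}, $\bar W(\mu_0,\tau_0)=O_p(n^{-1/2}(\log\log n)^{1/2})$ and $S_n\stackrel{p}{\to}\Sigma_0$, which is positive definite, so the usual quadratic expansion
\begin{eqnarray*}
l(\mu_0,\tau_0)=n\,\bar W(\mu_0,\tau_0)^T S_n^{-1}\bar W(\mu_0,\tau_0)(1+o_p(1))
\end{eqnarray*}
yields $l(\mu_0,\tau_0)=O_p(\log\log n)$. Combining the two bounds and using $\log\log n=o(n^{1/3})$, with probability approaching one $l(\mu_0,\tau_0)<\inf_{\partial B}l$, so the continuous $l$ cannot attain its minimum over $B$ on the boundary and the minimiser is interior, which is the assertion.

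I expect the main obstacle to be the uniform validity, over the comparatively large radius-$n^{-1/3}$ ball, of the multiplier bound $\|\lambda\|=O_p(\|\bar W\|)$ and of the expansion of $l$, rather than the order comparison itself. The difficulty is twofold: $W_{i1}$ grows like $|\hat\varepsilon_i|$, so I must control $\max_{i\le n}\|W_i\|$ through a maximal inequality exploiting the moment condition [C2]; and $W_{i1},W_{i2}$ contain the indicator $I(\hat\varepsilon_i<\mu)$ evaluated at the \emph{estimated} innovations, so the relevant empirical process is not i.i.d. It is precisely here that Lemma \ref{empcon} is indispensable, since it supplies the $O_p(n^{-1/2})$ uniform approximation of $\hat F_n$ by $F_\varepsilon$ needed to carry the i.i.d.\ equicontinuity estimates through on the shrinking ball.
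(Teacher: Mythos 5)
Your proposal is correct and follows essentially the same route as the paper's own proof: the Owen-type representation of the multiplier $\lambda$, the quadratic expansion $l(\mu,\tau)=n\,\bar W^{T}S_n^{-1}\bar W+o_p(n^{-1/3})$ valid uniformly on the ball, Lemma \ref{lemma:Y1 and Y2} to expand $\bar W$ around $(\mu_0,\tau_0)$, Lemma \ref{lemma:normality convergence} for $S_n\stackrel{p}{\to}\Sigma_0$ and the $O_p(\log\log n)$ bound at the centre, and the final comparison of $\inf_{\partial B}l\geq c\,n^{1/3}$ against $l(\mu_0,\tau_0)=O_p(\log\log n)$ via continuity of $l$ on the ball. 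If anything, your write-up is slightly more careful than the paper's: where the paper abbreviates the shift term as $O_p(\Delta)$ and then asserts the boundary lower bound $(c-\delta)n^{1/3}$, you make explicit that this step needs the linear term $\Sigma_1\xi$ to be bounded below in norm on the unit sphere, i.e.\ nonsingularity of $\Sigma_1$ (which holds under [C2] together with $\mu_0\neq0$), which is exactly the ingredient the paper uses implicitly.
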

\begin{proof}
As the same as the proof in \cite{owen_empirical_1990}, the $\lambda$ in Eq. (\ref{inclem}) can be represented as,
\begin{eqnarray}
\lambda=\left[\frac{1}{n}\sum_{i=1}^{n}W_{i}(\mu,\tau)W_{i}^T(\mu,\tau)\right]^{-1}
\left(\frac{1}{n}\sum_{i=1}^{n}W_{i}(\mu,\tau)\right)+o_p(n^{-1/3}),
\end{eqnarray}
and
\begin{eqnarray}
\frac{1}{n}\sum_{i=1}^{n}W_{i}(\mu,\tau)W_{i}^T(\mu,\tau)\stackrel{p}{\rightarrow}\Sigma_0,
\end{eqnarray}
uniformly in $\{(\mu,\tau):\Delta\le n^{-1/3}\}$. Based on the above fact and Lemma \ref{lemma:normality convergence}, the Taylor expansion of $l(\mu,\tau)$ can be expressed as
\begin{eqnarray*}
l(\mu,\tau)&=&2\sum_{i=1}^{n}\lambda^{T}W_{i}(\mu,\tau)-\sum_{i=1}^{n}\lambda^{T}W_{i}(\mu,\tau)W_{i}^T(\mu,\tau)\lambda+o_p(n^{-1/3})\\
&=&n[\frac{1}{n}\sum_{i=1}^{n}W_{i}(\mu,\tau)]^{T}[\frac{1}{n}\sum_{i=1}^{n}W_{i}(\mu,\tau)W_{i}^T(\mu,\tau)]^{-1}[\frac{1}{n}\sum_{i=1}^{n}W_{i}(\mu,\tau)]+o_p(n^{-1/3})\\
&=&n[\frac{1}{n}\sum_{i=1}^{n}W_{i}(\mu_0,\tau_0)+O_p(\Delta)]^{T}\Sigma_0^{-1}[\frac{1}{n}\sum_{i=1}^{n}W_{i}(\mu_0,\tau_0)+O_p(\Delta)]+o_p(n^{-1/3})\\
&\geq&n[O_p(n^{-1/2}(\log\log n)^{1/2})+O_p(n^{-1/3})]^{T}\Sigma_0^{-1}[O_p(n^{-1/2}(\log\log n)^{1/2})+O_p(n^{-1/3})]\\
&&+o_p(n^{-1/3})\\
&=&(c-\delta)n^{1/3}
\end{eqnarray*}
where $c$ is a positive constant since $\Sigma_0$ is of positive defined, and $\delta$ is infinitesimal. Meanwhile,
\begin{eqnarray*}
l(\mu_0,\tau_0)&=&\left[\frac{1}{\sqrt{n}}\sum_{i=1}^{n}W_{i}(\mu_0,\tau_0)\right]^{T}\left[\frac{1}{n}
\sum_{i=1}^{n}W_{i}(\mu_0,\tau_0)W_{i}^T(\mu_0,\tau_0)\right]^{-1}
\left[\frac{1}{\sqrt{n}}\sum_{i=1}^{n}W_{i}(\mu_0,\tau_0)\right]+o_p(1)\\
&=&O_p(\log\log n)<n^{1/3}.
\end{eqnarray*}
Since $l(\mu,\tau)$ is continuous in the ball $\{(\mu,\tau):\Delta\le n^{-1/3}\}$, the solution of (\ref{hmutau}) must be in the interior of this ball.
\end{proof}

\begin{proof}[proof of Theorem \ref{th4}]Lemma \ref{lemma:normality convergence} shows that $\parallel(\mu_0,\tau_0)-(\mu,\tau)\parallel\leq n^{-1/3}$. Additionally, in the ball $\{(\mu,\tau):\Delta\le n^{-1/3}\}$, we have
\begin{eqnarray*}
l(\mu,\tau)&=&n\left[\frac{1}{n}\sum_{i=1}^{n}W_{i}(\mu,\tau)\right]^{T}\left[\frac{1}{n}\sum_{i=1}^{n}W_{i}(\mu,\tau)W_{i}^T(\mu,\tau)\right]^{-1}
\left[\frac{1}{n}\sum_{i=1}^{n}W_{i}(\mu,\tau)\right]+o_p(n^{-1/3})\\
&=&\left[\frac{1}{\sqrt{n}}\sum_{i=1}^{n}W_{i}(\mu_0,\tau_0)+\Sigma_{1}v\right]^{T}\Sigma_0^{-1}\left[\frac{1}{\sqrt{n}}
\sum_{i=1}^{n}W_{i}(\mu_0,\tau_0)+\Sigma_{1}v\right]+o_p(n^{-1/3})
\end{eqnarray*}
where $v=\sqrt{n}(\mu-\mu_0,\tau-\tau_0)^T$ and $\Sigma_1=\left[\begin{array}{cc}-(F_\varepsilon(\mu_{\tau_0})+\frac{\tau_0}{1-2\tau_0})&\frac{-\mu_{\tau_0}}{(1-2\tau_0)^2}\\f_\varepsilon(\mu_{\tau_0})
&0\end{array}\right]$. Hence, $l(\mu,\tau)$ will be minimized at
\begin{eqnarray}\label{a15}
\Sigma_{1}v=-\frac{1}{\sqrt{n}}\sum_{i=1}^{n}W_{i}(\mu_0,\tau_0)\stackrel{d}{\rightarrow}N(\mathbf{0},\Sigma_0)
\end{eqnarray}
except for $\mu_0=0$ or $f(\mu_0)=0$. Hence, we have
\begin{eqnarray*}
\sqrt{n}\left(\begin{array}{c}\hat{\mu}-\mu_0 \\ \hat{\tau}-\tau_0 \end{array}\right)\stackrel{d}{\rightarrow}N\left(\mathbf{0},\Sigma_1^{-1}\Sigma_0(\Sigma_1^{-1})^T\right).
\end{eqnarray*}
\end{proof}

\subsection{Proof of Theorem \ref{th5}}
\begin{proof}
Since $\hat{\mu}$, $\hat{\tau}$, $\hat{\sigma}_T$ are all $\sqrt{n}$-consistency and asymptotic normally, the consistency of $\widehat{Q}_\alpha(Y_T|\mathcal{F}_{T-1})$ and $\widehat{ES}_\alpha(Y_T|\mathcal{F}_{T-1})$ are trivial. To proof their asymptotic normality, we first give the approximation expansion of $\hat{\mu}$ and $\hat{\tau}$ with respect to $\hat{\phi}-\phi_0$. Based on the Eq. (\ref{a15}), we have
\begin{eqnarray*}
\sqrt{n}\left(\begin{array}{c}\hat{\mu}-\mu_0 \\ \hat{\tau}-\tau_0 \end{array}\right)&=&\frac{1}{\sqrt{n}}\Sigma_1^{-1}\sum_{i=1}^{n}W_{i}(\mu_0,\tau_0)\\
&=&\sqrt{n}\Sigma_1^{-1}\left[\begin{array}{c}\frac{1}{n}\sum_{i=1}^{n}\left((\hat{\varepsilon}_i-\mu_0)I(\hat{\varepsilon}_i<\mu_0)+\frac{\tau_0}{1-2\tau_0}(\hat{\varepsilon}_i-\mu_0)\right) \\ \frac{1}{n}\sum_{i=1}^{n}I(\hat{\varepsilon}_i<\mu_0)-\alpha \end{array}\right]\\
&=&\sqrt{n}\Sigma_1^{-1}\left[\begin{array}{c}\int_{-\infty}^{\mu_0}(x-\mu_0)d\hat{F}_n(x)+\frac{\tau_0}{1-2\tau_0}\int_{-\infty}^{+\infty}(x-\mu_0)d\hat{F}_n(x)
\\\int_{-\infty}^{\mu_0}d\hat{F}_n(x)-\alpha\end{array}\right]\\
&=&\sqrt{n}\Sigma_1^{-1}\left[\begin{array}{c}\int_{-\infty}^{\mu_0}(x-\mu_0)d(\hat{F}_n(x)-F_{\varepsilon}(x))+\frac{\tau_0}{1-2\tau_0}\int_{-\infty}^{+\infty}(x-\mu_0)d(\hat{F}_n(x)-F_{\varepsilon}(x))
\\\int_{-\infty}^{\mu_0}d(\hat{F}_n(x)-F_{\varepsilon}(x))\end{array}\right]\\
&=&\sqrt{n}\Sigma_1^{-1}\left[\begin{array}{c}\int_{-\infty}^{\mu_0}(\hat{F}_n(x)-F_{\varepsilon}(x))dx+\frac{\tau_0}{1-2\tau_0}\int_{-\infty}^{+\infty}(\hat{F}_n(x)-F_{\varepsilon}(x))dx
\\\hat{F}_n(\mu_0)-F_{\varepsilon}(\mu_0)\end{array}\right].
\end{eqnarray*}
According to Eq.(\ref{a10}) in the proof of Lemma \ref{empcon}, we have
\begin{eqnarray*}
\sqrt{n}\left(\begin{array}{c}\hat{\mu}-\mu_0 \\ \hat{\tau}-\tau_0 \end{array}\right)&=&
\sqrt{n}\Sigma_1^{-1}\left[\begin{array}{c}\left(\int_{-\infty}^{\mu_0}f_{\varepsilon}(x)dx+\frac{\tau_0}{1-2\tau_0}\int_{-\infty}^{+\infty}f_{\varepsilon}(x)dx\right)\cdot\mathbf{E}\left[\frac{z_t\varepsilon_t}{\phi_0^T z_t}\right](\hat{\phi}-\phi_0)+o_p(\frac{1}{\sqrt{n}})\\
f_{\varepsilon}(\mu_0)\cdot\mathbf{E}\left[\frac{z_t\varepsilon_t}{\phi_0^T z_t}\right](\hat{\phi}-\phi_0)+o_p(\frac{1}{\sqrt{n}})\end{array}\right]\\
&=&\Sigma_1^{-1}\left[\begin{array}{c}e_1\\e_2\end{array}\right]\Sigma_2\cdot\sqrt{n}(\hat{\phi}-\phi_0)+o_p(1),
\end{eqnarray*}
where $e_1=\int_{-\infty}^{\mu_0}f_{\varepsilon}(x)dx+\frac{\tau_0}{1-2\tau_0}\int_{-\infty}^{+\infty}f_{\varepsilon}(x)dx$, $e_2=f_{\varepsilon}(\mu_0)$ and $\Sigma_2=\mathbf{E}\left[\frac{z_t\varepsilon_t}{\phi_0^Tz_t}\right]$.
Since $\widehat{Q}_\alpha(\varepsilon)=\hat{\mu}$, it can be expanded as
\begin{eqnarray}
\sqrt{n}(\widehat{Q}_\alpha(\varepsilon)-Q_\alpha(\varepsilon))=
{\left[\begin{array}{c} 1 \\ 0\end{array}\right]}^T\Sigma_1^{-1}\left[\begin{array}{c}e_1\\e_2\end{array}\right]\Sigma_2\cdot\sqrt{n}(\hat{\phi}-\phi_0)+o_p(1).
\end{eqnarray}
For $\widehat{ES}_\alpha(\varepsilon)=\left(1+\frac{\hat{\tau}}{(1-2\hat{\tau})\alpha}\right)\hat{\mu}-\frac{\hat{\tau}}{(n-m)(1-2\hat{\tau})\alpha}\sum_{t}\hat{\varepsilon}_t$, through Taylor expansion, we have
\begin{eqnarray}
\sqrt{n}(\widehat{ES}_\alpha(\varepsilon)-ES_\alpha(\varepsilon))=
{\left[\begin{array}{c} e_3 \\ e_4\end{array}\right]}^T\Sigma_1^{-1}\left[\begin{array}{c}e_1\\e_2\end{array}\right]\Sigma_2\cdot\sqrt{n}(\hat{\phi}-\phi_0)+o_p(1),
\end{eqnarray}
where $e_3=1+\frac{\tau_0}{(1-2\tau_0)\alpha}$ and $e_4=\frac{\mu_0-\mathbf{E}[\varepsilon_t]}{\alpha(1-2\tau_0)^2}$.

Hence, $\widehat{Q}_\alpha(\varepsilon)$ and $\widehat{ES}_\alpha(\varepsilon)$ are both asymptotic normality. In addition, conditional on information prior to time $T$, we have the following expansion of $\hat{\sigma}_T$,
\begin{eqnarray}
\sqrt{n}(\hat{\sigma}_T-\sigma_T)=
z_t^{T}\sqrt{n}(\hat{\phi}-\phi_0)+o_p(1),
\end{eqnarray}
By delta method and Slutsky's theorem, conditional on information prior to time $T$, $\widehat{Q}_\alpha(Y_T|\mathcal{F}_{T-1})$ and $\widehat{ES}_\alpha(Y_T|\mathcal{F}_{T-1})$ are asymptotic normality, and their asymptotic variance can be expressed as
\begin{eqnarray*}
AVar(\widehat{Q}_\alpha(Y_T|\mathcal{F}_{T-1}))=(\sigma_T)^2AVar(\widehat{Q}_\alpha(\varepsilon))+(Q_\alpha(\varepsilon))^2AVar(\hat{\sigma}_T)+2\sigma_TQ_\alpha(\varepsilon)ACov(\hat{\sigma}_T,\widehat{Q}_\alpha(\varepsilon)),
\end{eqnarray*}
\begin{eqnarray*}
AVar(\widehat{ES}_\alpha(Y_T|\mathcal{F}_{T-1}))=(\sigma_T)^2AVar(\widehat{ES}_\alpha(\varepsilon))+(ES_\alpha(\varepsilon))^2AVar(\hat{\sigma}_T)+2\sigma_T ES_\alpha(\varepsilon)ACov(\hat{\sigma}_T,\widehat{ES}_\alpha(\varepsilon)).
\end{eqnarray*}
Denote $\Lambda_1={\left[\begin{array}{c} 1 \\ 0\end{array}\right]}^T\Sigma_1^{-1}\left[\begin{array}{c}e_1\\e_2\end{array}\right]\Sigma_2$ and $\Lambda_2={\left[\begin{array}{c} e_3 \\ e_4\end{array}\right]}^T\Sigma_1^{-1}\left[\begin{array}{c}e_1\\e_2\end{array}\right]\Sigma_2$, and then we have
\begin{eqnarray}
AVar(\widehat{Q}_\alpha(\varepsilon))&=&\Lambda_1\Xi_{\phi}\Lambda_1^T,\\
ACov(\hat{\sigma}_T,\widehat{Q}_\alpha(\varepsilon))&=&\Lambda_1\Xi_{\phi}z_T,\\
AVar(\widehat{ES}_\alpha(\varepsilon))&=&\Lambda_2\Xi_{\phi}\Lambda_2^T,\\
ACov(\hat{\sigma}_T,\widehat{ES}_\alpha(\varepsilon))&=&\Lambda_2\Xi_{\phi}z_T.
\end{eqnarray}
Hence, the asymptotic variance of $\widehat{Q}_\alpha(Y_T|\mathcal{F}_{T-1})$ and $\widehat{ES}_\alpha(Y_T|\mathcal{F}_{T-1})$ are
\begin{eqnarray}
AVar(\widehat{Q}_\alpha(Y_T|\mathcal{F}_{T-1}))=(\sigma_T)^2\Lambda_1\Xi_{\phi}\Lambda_1^T+(Q_\alpha(\varepsilon))^2z_T^{T}\Xi_{\phi}z_T+2\sigma_T Q_\alpha(\varepsilon)\Lambda_1\Xi_{\phi}z_T,\\
AVar(\widehat{ES}_\alpha(Y_T|\mathcal{F}_{T-1}))=(\sigma_T)^2\Lambda_2\Xi_{\phi}\Lambda_2^T+(ES_\alpha(\varepsilon))^2z_T^{T}\Xi_{\phi}z_T+2\sigma_T ES_\alpha(\varepsilon)\Lambda_2\Xi_{\phi}z_T.
\end{eqnarray}
\end{proof}

\bibliographystyle{model2-names}
\bibliography{ESE}

\end{document}